\theoremstyle{plain}
\newtheorem{thm}{Theorem}[section]
\newtheorem*{thm*}{Theorem}
\newtheorem{lm}[thm]{Lemma}
\newtheorem{cor}[thm]{Corollary}
\newtheorem*{cor*}{Corollary}
\newtheorem{prop}[thm]{Proposition}
\newtheorem*{conj*}{Conjecture}
\newtheorem{I_thm}{Theorem} % Used in Introduction
\newtheorem{I_cor}{Corollary} % Used in Introduction
\theoremstyle{remark}
\newtheorem*{remark}{Remark}
\newtheorem{I_remark}{Remark} % Used in Introduction
\newtheorem*{thank}{Acknowledgments}
\theoremstyle{definition}
\newtheorem*{defn*}{Definition}
\newtheorem{defn}[thm]{Definition}
\newcommand{\nc}{\newcommand}
\newcommand{\beq}{\begin{equation}}
\newcommand{\eeq}{\end{equation}}
\newcommand{\bpmx}{\begin{pmatrix}}
\newcommand{\epmx}{\end{pmatrix}}
\newcommand{\bbmx}{\begin{bmatrix}}
\newcommand{\ebmx}{\end{bmatrix}}
\newcommand{\wh}{\widehat}
\newcommand{\wtd}{\widetilde}
\newcommand{\beqcd}[1]{\begin{equation*}\label{#1}\tag{#1}}
\newcommand{\eeqcd}{\end{equation*}}
\numberwithin{equation}{section}
\newenvironment{mylist}{
  \begin{enumerate}{}{%
      \setlength{\itemsep}{5pt} \setlength{\parsep}{0in}
      \setlength{\parskip}{0in} \setlength{\topsep}{0in}
      \setlength{\partopsep}{0in}
      \setlength{\leftmargin}{0.17in}}}{\end{enumerate}}
\def\parref#1{\ref{#1}}
\def\thmref#1{Theorem~\parref{#1}}
\def\propref#1{Prop.~\parref{#1}}
\def\corref#1{Cor.~\parref{#1}}     \def\remref#1{Remark~\parref{#1}}
\def\secref#1{\S\parref{#1}}
\def\chref#1{Chapter~\parref{#1}}
\def\lmref#1{Lemma~\parref{#1}}
\def\subsecref#1{\S\parref{#1}}
\def\defref#1{Def.~\parref{#1}}
\def\assref#1{Assumption~\parref{#1}}
\def\hypref#1{Hypothesis~\parref{#1}}
\def\subsubsecref#1{\S\parref{#1}}
\def\egref#1{Example~\parref{#1}}
\def\conjref#1{Conjecture~\parref{#1}}
\def\makeop#1{\expandafter\def\csname#1\endcsname
  {\mathop{\rm #1}\nolimits}\ignorespaces}
\def\Sel{Sel}
\def\ord{{ord}}
\def\Ord{{\mathrm{ord}}}
\def\Spec{\mathrm{Spec}\,}
\DeclareMathOperator{\length}{length}
\DeclareMathAlphabet{\mathpzc}{OT1}{pzc}{m}{it}
\DeclareSymbolFont{cyrletters}{OT2}{wncyr}{m}{n}
\DeclareMathSymbol{\SHA}{\mathalpha}{cyrletters}{"58}
\def\makebb#1{\expandafter\def
  \csname bb#1\endcsname{{\mathbb{#1}}}\ignorespaces}
\def\makebf#1{\expandafter\def\csname bf#1\endcsname{{\bf
      #1}}\ignorespaces}
\def\makegr#1{\expandafter\def
  \csname gr#1\endcsname{{\mathfrak{#1}}}\ignorespaces}
\def\makescr#1{\expandafter\def
  \csname scr#1\endcsname{{\EuScript{#1}}}\ignorespaces}
\def\makecal#1{\expandafter\def\csname cal#1\endcsname{{\mathcal
      #1}}\ignorespaces}
\def\doLetters#1{#1A #1B #1C #1D #1E #1F #1G #1H #1I #1J #1K #1L #1M
                 #1N #1O #1P #1Q #1R #1S #1T #1U #1V #1W #1X #1Y #1Z}
\def\doletters#1{#1a #1b #1c #1d #1e #1f #1g #1h #1i #1j #1k #1l #1m
                 #1n #1o #1p #1q #1r #1s #1t #1u #1v #1w #1x #1y #1z}
    \def\setminus{\smallsetminus}
\def\norm#1{\lVert#1\rVert}
\def\Fpbar{\bar{\mathbb F}_p}
\def\Fp{{\mathbb F}_p}
\def\Qp{\Q_p}
\def\Qbar{\bar\Q}
\def\Zp{\Z_p}
\def\rmT{{\mathrm T}}
\def\rmN{{\mathrm N}}
\def\cA{{\mathcal A}}  %automorphic forms
\def\cD{\mathcal D}
\def\cE{{\mathcal E}}
\def\cG{{\mathcal G}}
\def\cH{{\mathcal H}}
\def\cI{\mathcal I}
\def\cJ{\mathcal J}
\def\cK{{\mathcal K}}  %imaginary quadratic field
\def\cM{\mathcal M}
\def\cR{{\mathcal R}}
\def\cO{\mathcal O}
\def\cS{{\mathcal S}}
\def\cf{{\mathcal f}}
\def\cX{\mathcal X}
\def\cV{{\mathcal V}}
\def\cJ{\mathcal J}
\def\cU{\mathcal U}
\def\bff{\mathbf f}
\def\bftheta{\boldsymbol{\theta}}
\def\sF{\mathscr F}
\def\sL{\mathscr L}
\def\sT{\mathscr T}
\newcommand{\Z}{\mathbf Z}
\newcommand{\Q}{\mathbf Q}
\newcommand{\R}{\mathbf R}
\newcommand{\C}{\mathbf C}
\newcommand{\A}{\mathbf A}    % for adele
\newcommand{\F}{\mathbb F}
\def\frakp{{\mathfrak p}}
\def\frakm{\mathfrak m}
\def\frakl{\mathfrak l}
\def\frakN{\mathfrak N}
\def\bfone{{\mathbf 1}}
\def\Zhat{\hat{\Z}}
\def\wbar{\bar{w}}
\def\wbar{\bar{w}}
\def\et{{\acute{e}t}}
\def\etale{{\'{e}tale }}
\def\pont{Pontryagin } % Pontryagin dual
\def\padic{\text{$p$-adic }}
\def\Neron{N\'{e}ron }
\def\Frob{\mathrm{Frob}}
\newcommand{\<}{\langle}   %\< is not defined yet.
\renewcommand{\>}{\rangle} %\> is already defined.
\def\isoto{\stackrel{\sim}{\to}}
\def\surjto{\twoheadrightarrow}
\def\ot{\otimes}
\def\hookto{\hookrightarrow}
\def\longto{\longrightarrow}
\def\ol{\overline}  \nc{\opp}{\mathrm{opp}} \nc{\ul}{\underline}
\newcommand{\pair}[2]{\< #1, #2\>}
\newcommand{\pairing}{\pair{\,}{\,}}
\def\XYmatrix{\xymatrix@M=8pt} % make \xymatrix not too cluttered
\def\ncmd{\newcommand}
\ncmd{\xysubset}[1][r]{\ar@<-2.5pt>@{^(-}[#1]\ar@<2.5pt>@{_(-}[#1]}
\ncmd{\XYmatrixc}[1]{\vcenter{\XYmatrix{#1}}}
\ncmd{\xyto}[1][r]{\ar@{->}[#1]}
\ncmd{\xyinj}[1][r]{\ar@{^(->}[#1]}
\ncmd{\xysurj}[1][r]{\ar@{->>}[#1]}
\ncmd{\xyline}[1][r]{\ar@{-}[#1]}
\ncmd{\xydotsto}[1][r]{\ar@{.>}[#1]}
\ncmd{\xydots}[1][r]{\ar@{.}[#1]}
\ncmd{\xyleadsto}[1][r]{\ar@{~>}[#1]}
\ncmd{\xyeq}[1][r]{\ar@{=}[#1]} \ncmd{\xyequal}[1][r]{\ar@{=}[#1]}
\ncmd{\xyequals}[1][r]{\ar@{=}[#1]}
\ncmd{\xymapsto}[1][r]{l\ar@{|->}[#1]}\ncmd{\xyimplies}[1][r]{\ar@{=>}[#1]}
\ncmd{\xyiso}{\ar[r]_-{\sim}}
\def\injxy{\ar@{^(->}}
\newcommand{\pMX}[4]{\begin{pmatrix}
{#1}& {#2}\\
{#3}&{#4}\end{pmatrix} }
 \newcommand{\pDII}[2]{\begin{pmatrix}{#1}&0
 \\0&{#2}\end{pmatrix}}
\newcommand{\seesaw}[4]{{#1}\ar@{-}[rd]\ar@{-}[d]&{#2}\ar@{-}[d]\\
{#3}\ar@{-}[ru]&{#4}}
\def\ie{i.e. }
\def\cf{\mbox{{\it cf.} }}
\def\loccit{\mbox{{\it loc.cit.} }}
\def\mapR{\smash{\mathop{\longrightarrow}}}
\newcommand{\exact}[3]{
0\mapR{#1}\mapR{#2}\mapR{#3}\mapR 0 }
\def\Ch{\mathrm{char}}
\def\uf{\varpi} %uniformizer
\def\Sg{{\varSigma}}  %%CM type
\def\ndivides{\nmid}
\def\x{{\times}}
\def\e{\varepsilon} % episilon factor
\def\al{\alpha}
\def\Lam{\Lambda}
\def\kap{\kappa}
\def\dirlim{\varinjlim}
\def\prolim{\varprojlim}
\def\iso{\simeq}
\def\con{\equiv}
\def\bksl{\backslash}
\newcommand\stt[1]{\left\{#1\right\}}
\def\ep{\epsilon}
\def\pd{\partial}
\def\lam{\lambda}
\def\sg{\sigma}
\def\vp{\varphi}
\def\disjoint{\bigsqcup}
\def\setp{{(p)}}
\newcommand{\powerseries}[1]{\llbracket{#1}\rrbracket}
\renewcommand\pmod[1]{\,(\mbox{mod }{#1})}
\newcommand\Dmd[1]{\left<{#1}\right>} %Diamond operator
\def\Cp{\C_p}
\title[On the anticyclotomic Iwasawa main conjecture for modular forms]{On the anticyclotomic Iwasawa main conjecture for modular forms} %on definite quaternion algebras}
\thanks{The first author is partly supported by
Grant-in-Aid for Young Scientists (B) No. 23740015  and Hakubi project
of Kyoto University. The second author is partially supported by National Science Council grant 101-2115-M-002-010-MY2}
\author[M. Chida]{Masataka Chida}
\address{ Department of Mathematics, Kyoto University,
Kitashirakawa Oiwake-cho, Sakyo-ku, Kyoto 606-8502.
%Taipei, Taiwan ~
}
\address{
The Hakubi Center for Advanced Research, Kyoto University,
Yoshida-Ushinomiya-cho, Sakyo-ku, Kyoto, 606-8302, Japan
%Taipei, Taiwan ~
}
\email{chida@math.kyoto-u.ac.jp}
\author[M.-L. Hsieh]{Ming-Lun Hsieh}
\address{ Department of Mathematics~\\National Taiwan University ~ \\
No. 1, Sec. 4, Roosevelt Road, Taipei 10617, Taiwan~
%Taipei, Taiwan ~
}
\email{mlhsieh@math.ntu.edu.tw}
\date{\today}
\subjclass[2000]{Primary 11R23, 11F11}
\begin{document}
\begin{abstract}We generalize the work of Bertolini and Darmon on the anticyclotomic main conjecture for elliptic curves to modular forms of higher weight.
\end{abstract}
\maketitle
\tableofcontents

%!TEX root = AIMC.tex
\def\cmpt{\varsigma}
\def\ShCl{M^{[\ell]}_n}
\def\Jacl{J^{[\ell]}_n}
\def\Bl{B(\ell)}
\def\cK{K}
\def\cmJ{J}
\def\pme{q}
\def\Un{\cU}
\def\setl{{(\ell)}}
\def\wt{k}
\def\padicMF{\cM}
\def\Bhat{\wh B}
\def\frakml{\frakm^{[\ell]}}
\def\fn{g}
\def\Ill{I_{\fn}}
\def\cIll{\cI_{\fn}^{[\ell]}}
\def\cIfn{\cI_{\fn}}
\def\xx{\,\times\,}
\def\sing{{\mathrm{sing}}}
\def\ord{{\mathrm{ord}}}
\def\an{\mathrm{an}}
\def\rig{\mathrm{rig}}
\def\CM{\mathrm{CM}}
\def\red{\mathrm{red}}
\def\res{\mathrm{res}}
\def\Diff{\delta}
\def\CMP{\bftheta}
\def\Qq{\Q_\pme}
\def\Zhat{\wh\Z}
\def\XB{X_B}
\def\Sg{\Sigma}
\def\Setl{{[\ell]}}
\def\cOn{\cO_n}
\def\Tfn{T_{f,n}}
\def\Afn{A_{f,n}}
\def\Sel{\mathrm{Sel}}
\def\SB{\cS^B}
\def\SBk{\cS^B_k}
\def\divides{\,|\,}
\def\CR{$(\mathrm{CR}^+)$}
\def\cores{\mathrm{cor}}
\def\thmref#1{Theorem~\parref{#1}}
\def\propref#1{Proposition~\parref{#1}}
\def\corref#1{Corollary~\parref{#1}}     \def\remref#1{Remark~\parref{#1}}
\def\secref#1{\S\parref{#1}}
\def\chref#1{Chapter~\parref{#1}}
\def\lmref#1{Lemma~\parref{#1}}
\def\subsecref#1{\S\parref{#1}}
\def\defref#1{Definition~\parref{#1}}
\def\assref#1{Assumption~\parref{#1}}
\def\hypref#1{Hypothesis~\parref{#1}}
\def\subsubsecref#1{\S\parref{#1}}
\def\egref#1{Example~\parref{#1}}
\def\conjref#1{Conjecture~\parref{#1}}
\def\th{Theorem\,}
\def\proposition{Proposition\,}
\def\lemma{Lemma\,}
\def\conjecture{Conjecture\,}
\def\corollary{Corollary\,}
\def\example{Example\,}
\newtheorem*{hypCR}{Hypothesis \CR}
\section*{Introduction}
This is the continuation of our previous work \cite{Hsieh_Chida} on the analytic side of Iwasawa theory for modular forms over the anticyclotomic $\Zp$-extension of imaginary quadratic fields, \ie the construction of \padic $L$-functions and explicit interpolation formulas. The purpose of this article is to prove a one-sided divisibility relation towards the main conjecture in Iwasawa theory for modular forms over anticyclotomic $\Zp$-extensions by generalizing the proof of Bertolini and Darmon \cite{Bertolini_Darmon:IMC_anti} for elliptic curves. To state our result precisely, we introduce some notation. Let $f\in S_k(\Gamma_0(N))$ be an elliptic new form of level $N$ with $q$-expansion at the infinity cusp
\[f(q)=\sum_{n>0}\bfa_n(f)q^n.\]
Let $\cK$ be an imaginary quadratic field with absolute discriminant $D_\cK$. Decompose $N=N^+N^-$, where $N^+$ is only divisible by primes split in $\cK$ and $N^-$ is only divisible by primes inert or ramified in $\cK$. In this article, we assume that
\beqcd{ST}\text{ $N^-$ is the square-free product of an odd number of inert primes}.\eeqcd
Let $p$ be a distinguished rational prime such that
\[p\ndivides ND_\cK.\]
Fix an embedding $\iota_p:\Qbar\to\Cp$. Let $E= \Qp(f)$ be the Hecke field of $f$ in $\Cp$, \ie the finite extension of $\Qp$ generated by $\stt{\bfa_n(f)}_{n}$.
Let $\cO$ be the ring of integers of $E$ and $\mathbb{F}$ be the residue field. Henceforth, we assume that
\beqcd{ord}\text{$f$ is $p$-ordinary,}\eeqcd\ie the $p$-th Fourier coefficient $\bfa_p(f)$ is a unit in $\cO$. Let $\rho_{f}:G_\Q=\Gal(\Qbar/\Q)\to \GL_2(E)$ be the \padic Galois representation attached to $f$. We have $\det\rho_f=\e^{k-1}$, where $\e:G_\Q\to \Zp^\x$ is the \padic cyclotomic character. We consider the self-dual Galois representation \beq\label{Twist}\rho_f^*:=\rho_{f}\ot\e^\frac{2-k}{2}:G_\Q\to\GL_2(E).\eeq
Let $V_f=E^{\oplus 2}$ be the representation space of $\rho_f^*$. By the ordinary assumption for $f$, there exists a unique rank one $G_{\Qp}$-invariant subspace $F_p^+V_f\subset V_f$ on which the inertia group of $G_{\Qp}$ acts via $\e^\frac{k}{2}$.
We shall fix a $G_\Q$-stable lattice $T_f \subset V_f$ once and for all. Let $A_f:=V_f/T_f$ and let $F_p^+A_f$ be the image of $F_p^+V_f$ in $A_f$.
Let $\cK_\infty$ be the anticyclotomic $\Zp$-extension of $\cK$. Let $\Gamma=\Gal(\cK_\infty/\cK)$ and let $\Lam=\cO\powerseries{\Gamma}$ be the one-variable Iwasawa algebra over $\cO$. In this paper, we are interested in the $\Lam$-adic \emph{minimal} Selmer group $\Sel(K_\infty,A_f)$ for $\rho_f^*$. Recall that for each algebraic extension $L$ over $K$, the minimal Selmer group $\Sel(L,A_f)$ is defined by
\[\Sel(L,A_f):=\ker\stt{H^1(L,A_f)\to \prod_{v\ndivides p}H^1(L_{v},A_f)\x \prod_{v|p}H^1(L_{v},A_f/F_p^+A_f)},\]
where $v$ runs over places of $L$ and $L_{v}$ is the completion of $L$ with respect to $v$. It is well-known that the \pont dual $\Sel(K_\infty,A_f)^\vee$ of $\Sel(K_\infty,A_f)$ is a finitely generated $\Lam$-module.

On the other hand, in \cite[\th A]{Hsieh_Chida} we construct a theta element $\theta_\infty\in\Lam$ obtained by the evaluation of a $p$-ordinary definite quaternionic modular form at Gross points, and define a complex number $\Omega_{f,N^-}\in\C^\x$ attached to $(f,N^-)$ such that for every finite order character $\chi:\Gamma\to\mu_{p^\infty}$ of conductor $p^n$, the anticyclotomic \padic $L$-function $L_p(K_\infty,f):=\theta_\infty^2$ satisfies the following interpolation formula:
\beq\label{E:interpolation}\begin{aligned}\chi(L_p(K_\infty,f))=&\Gamma(\frac{k}{2})^2\cdot \frac{L(f/\cK,\chi,\frac{k}{2})}{\Omega_{f,N^-}} \cdot e_p(f,\chi)^{2}\cdot p^n \al_p(f)^{-2n}(p^{n}D_\cK)^{k-2} \cdot u_\cK^2\sqrt{D_\cK},\end{aligned}\eeq
where $\al_p(f)$ is the \padic unit root of the Hecke polynomial $X^2-\bfa_p(f)X+p^{k-1}$, $u_\cK=\#(\cO_\cK^\x)/2$ and $e_p(f,\chi)$ is the \padic multiplier defined by
 \[e_p(f,\chi)=\begin{cases}1&\text{if $n>0$,}\\
(1-\chi(\frakp)p^\frac{k-2}{2}\al_p(f)^{-1})(1-\chi(\ol{\frakp})p^\frac{k-2}{2}\al_p(f)^{-1})&\text{if $n=0$ and $p=\frakp\ol{\frakp}$ is split},\\
1-p^{k-2}\al_p(f)^{-2}&\text{if $n=0$ and $p=\frakp$ is inert}.
\end{cases}\]
In general, this complex number $\Omega_{f,N^-}$ belongs to $\Omega_f\cdot\cO$, where $\Omega_{f}$ is Hida's canonical period. Recall that \[\Omega_{f}=\frac{4^{k-1}\pi^k\norm{f}_{\Gamma_0(N)}}{\eta_f(N)},\]
 where $\norm{f}_{\Gamma_0(N)}$ is the Petersson norm of $f$ and $\eta_f(N)$ is the congruence number of $f$ among forms in $S_k(\Gamma_0(N))$.

The main result of this paper is to prove under certain hypotheses a one-sided divisibility result towards the anticyclotomic main conjecture asserting an equality between the characteristic power series $\Ch_\Lam\Sel(\cK_\infty,A_f)^{\vee}$ and the \padic $L$-function $L_p(K_\infty,f)$. To describe our hypotheses explicitly, we need to introduce some notation. Let $\bar\rho_{f}$ be the residual Galois representation of $\rho_{f}$. Throughout, we assume that the prime $p$ satisfies the following hypothesis:
\begin{hypCR}\label{H:CR}
\begin{mylist}
\item $p>k+1$ and  $\#(\Fp^\x)^{k-1}>5$,
\item The restriction of $\bar\rho_{f}$ to the absolute Galois group of $\Q(\sqrt{(-1)^\frac{p-1}{2}p})$ is absolutely irreducible,
\item $\bar\rho_f$ is ramified at $\ell$ if either of the following holds: \[{\rm(i)}\,\ell\mid N^-\text{ and }\ell^2\con 1\pmod{p},\quad {\rm(ii)}\,\ell\divides N^+.\]
\end{mylist}
\end{hypCR}
We will further assume %anomalous
\beqcd{PO}\text{ $\bfa_p(f)^2\not \con 1\pmod{p}$ if $k=2$.}\eeqcd
%\beqcd{CT} \text{there exists a prime $q\parallel N$ with $p\ndivides q^2-1$ if $k>2$.}\eeqcd
\begin{I_remark}\begin{mylist}\item Under the hypothesis \CR, it is proved in \cite[\proposition 6.1]{Hsieh_Chida} that \[\Omega_{f,N^-}=u\cdot \Omega_f\text{ for some }u\in\cO^\x\] if we further assume $\bar\rho_f$ is ramified at all primes dividing $N^-$.
\item \CR (2) implies that the definition of the Selmer group $\Sel(K_\infty,A_f)$ does not depend on the choice of the lattice $T_f$.
\item  Note that \eqref{PO} is indeed equivalent to saying that $\alpha_p(f)^2\not \con 1\pmod{p}$.
Moreover, it implies that $e_p(f,\bfone)\not\con 0\pmod{p}$, where $\bfone$ is the trivial character. When $f$ is attached to an elliptic curve over $\Q$, the same hypothesis is also used in \cite[Assumption 2.15 and Proposition 2.16] {BD:Duke:DerivedMT}.
\item Since $N^->1$ is square-free, $f$ can not be a CM form, and hence the hypothesis \CR\, holds for all but finitely many primes $p$ (but it is not known if there are infinitely many ordinary primes for a given modular form $f$). 
\item The assumption that $\bar\rho_f$ is ramified at $\ell\divides N^+$ can be weakened by the work \cite{KPW}.
\end{mylist}
\end{I_remark}

\begin{I_thm}\label{T:A}With the hypotheses \CR\,and \eqref{PO} for the prime $p$, we have
\[\Ch_\Lam\Sel(\cK_\infty,A_f)^{\vee}\supset (L_p(K_\infty,f)).\]
\end{I_thm}
\begin{I_remark}\begin{enumerate}\item For the case $k=2$, this theorem was proved by Bertolini and Darmon \cite{Bertolini_Darmon:IMC_anti} with the hypotheses for $f$ being $p$-isolated and the maximality of the image of the residual Galois representation $\bar\rho_f$. The former assumption was removed by Pollack and Weston \cite{Pollack_Weston:AMU}. We remove the assumption on the image of the residual Galois representation by looking carefully into the Euler system arguments in \cite{Bertolini_Darmon:IMC_anti}.
%\item If we assume the conductor $N$ is square-free, then \CR\,are satisfied under the following weaker assumptions
%\begin{mylist}
%\item $p>k+1$ and $\#(\Fp^\x)^{k-1}>5$.
%\item The residual Galois representation  $\bar\rho_{f}$ is absolutely irreducible.
%\item $\bar\rho_f$ is ramified at $\ell$ if either (i) $\ell\mid N^-$ and $\ell^2\con 1\pmod{p}$ or (ii) $\ell\parallel N^+$ and $\ell\con 1\pmod{p}$.
%\end{mylist}
\item It is expected that the other divisibility follows from the work of Skinner-Urban \cite{SU:IMC_GL(2)} on the three-variable main conjecture for $f$ together with the generalization of Vastal's result on the vanishing of $\mu$-invariant of the \padic $L$-function $L_p(K_\infty,f)$ (\cite{Vatsal:nonvanishing} and \cite{Hsieh_Chida}).
\end{enumerate}
\end{I_remark}
\def\Tam{\mathrm{Tam}}
We obtain the following immediate consequence of \thmref{T:A} and \cite[Theorem C]{Hsieh_Chida}.
\begin{I_cor}With the hypotheses in \thmref{T:A}, the $\Lam$-module $\Sel(K_\infty,A_f)$ is cotorsion and its $\mu$-invariant vanishes.
\end{I_cor}
Combined with control theorems of Selmer groups and the interpolation formula of $L_p(K_\infty,f)$, the above theorem yields the following consequence:
\begin{I_cor}\label{C:A}With the hypotheses in \thmref{T:A}, if the central $L$-value $L(f/K,\frac{k}{2})$ is non-zero, then the minimal Selmer group $\Sel(K,A_f)$ is finite and
\begin{align*}\Fitt_\cO(\Sel(K,A_f))\cdot \prod_{\ell|N^+}\Tam_\ell(f)\supset&\left(\frac{L(f/\cK,\frac{k}{2})}{\Omega_{f,N^-}}\right)\cO,\end{align*}
where $\Tam_\ell(f)=\Fitt_\cO H^1(K_\ell^{ur},T_f)^{G_{K_\ell}}_{\mathrm{tor}}$ are the local Tamagawa ideals at $\ell|N^+$.
\end{I_cor}
The reader might be aware of the missing local Tamagawa ideals at $\ell|N^-$ in view of the Bloch-Kato conjecture. This discrepancy is due to the complex number $\Omega_{f,N^-}$ is different from the canonical period $\Omega_f$ in general. When $k=2$ and $N$ is square-free, it is proved in \cite{Pollack_Weston:AMU} that the ratio $\Omega_{f,N^-}/\Omega_{f}$ is precisely a product of local Tamagawa ideals at $\ell|N^-$.

The proof of \thmref{T:A} relies on the existence of Euler system together with the first and second explicit reciprocity laws \`{a} la Bertolini-Darmon for the Galois module $T_f$. This is a generalization of the construction in \cite{Bertolini_Darmon:IMC_anti} for elliptic new forms of weight two. To explain the main idea of the construction of the Euler system, we introduce some notation. Let $\uf$ be a uniformizer of $\cO$ and let $\Lam_n=\Lam\ot_\cO\cO/\uf^n\cO$ for a positive integer $n$. Let $\Tfn=T_f/\uf^nT_f$ and $\Afn=\ker\stt{\uf^n:A_f\to A_f}$. Following Bertolini and Darmon, we construct by the technique of level-raising the Euler system $\cE_n$ for each $n$ arising from Heegner points in various Shimura curves with wildly ramified level at $p$. This Euler system $\cE_n$ is a collection of norm-compatible cohomology classes $\kappa_\cD(\ell)_m\in H^1(K_m,\Tfn)$ for $K_\infty/K_m/K$ indexed by $(\ell,\cD)$, where $\ell$ is an \emph{$n$-admissible prime} for $f$ (\defref{D:admissibleprimes}) and $\cD=(\Delta,g)$ is an \emph{$n$-admissible form}, a pair consisting of $\Delta=N^-\cdot S$ with $S$ a square-free product of an even number of $n$-admissible primes and a weight two $p$-ordinary eigenform $\fn$ on the definite quaternion algebra of discriminant $\Delta$ such that Hecke eigenvalues of $\fn$ are congruent to those of $f$ modulo $\uf^n$ (\defref{D:1}).
To each $n$-admissible form $\cD=(\Delta,g)$, we can associate a finitely generated compact $\Lam_n$-module $X_{\cD}$, the \pont dual of the $\Delta$-ordinary Selmer group for $\Tfn$, and a theta element $\theta_{\cD}\in\Lam_n$ obtained by the evaluation of $g$ at Gross points. The first reciprocity law gives a connection between the Euler system $\kappa_\cD(\ell)$ and the theta element $\theta_\cD$, by which one can control the Selmer group $X_\cD$ in terms of $\theta_\cD$, and the second reciprocity law is a kind of level raising argument at two primes, which provides a decreasing induction on theta elements (or rather \padic $L$-functions). The main novelty in this paper is to establish the connection between this Euler system $\cE_n$ and the \padic $L$-function $L_p(K_\infty,f)$ of $f$ modulo $\uf^n$ by the congruence between theta elements attached to weight two forms and higher weight forms. A key observation is that when $\Delta=N^-$, we can construct an $n$-admissible form $\cD_0=(N^-,g_0)$ such that $X_{\cD_0}=\Sel(K_\infty,A_f)^\vee\pmod{\uf^n}$ and $\theta_{\cD_0}^2\con L_p(K_\infty,f)\pmod{\uf^n}$ (\propref{P:R1}). We remark that we do not make use of the congruence among (definite quaternionic) modular forms of different weights but rather we exploit the congruence between the evaluations of modular forms of weight two and higher weight at Gross points. Thus, our approach does not provide perspective for the two variable main conjecture in \cite{LongoVigni:HidaFamilies} by varying $f$ in Hida families, despite the fact that Hida theory is also a key tool used to avoid the technical difficulties arising from the use of Shimura curves with wildly ramified level at $p$ in our proof.
%We remark that in \cite{Bertolini_Darmon:IMC_anti} (resp. \cite{Pollack_Weston:AMU}), the Euler systems are attached to Hecke eigensystems over $\cO$ (resp. $\cO/p^n\cO$) (\ie ring homomorphisms from cuspidal Hecke algebras to $\cO$) while here we consider Euler systems attached to mod $p^n$ eigenforms, which is directly connected with \padic $L$-functions.
%We follows the Euler system argument in \cite{Bertolini_Darmon:IMC_anti} and the simplification \cite{Pollack_Weston:AMU}, but we do not use induction.  For each $\vp:\Lam\to\cO_\vp$ with valued in a finite flat extension $\cO_\vp$ of $\cO$. Let $s_{\cD}=\length_{\cO_\vp}(X_{\cD}\ot_\vp\cO_\vp)$ and $t_{\cD}=\length_{\cO_{\vp}}(\cO_\vp/(\vp(\theta_{\cD}),\uf^n))$. Using first reciprocity law, we can show that if $t_{\cD}=0$, then $s_{\cD}=0$. The goal is to prove that the inequality $s_{\cD_0}\leq 2t_{\cD_0}$.
%The second explicit reciprocity laws enables us to choose a sequence of $n$-admissible forms $\cD_0=(N^-,g_0),\cD_1=(\Delta_1,g_1)\ldots \cD_r=(\Delta_r,g_r)$ such that $t_{D_r}=0$ and for $i=1,\ldots,r$, $s_{\cD_{i-1}}\leq s_{\cD_{i}}+2(t_{\cD_{i-1}}-t_{\cD_{i}})$.
%\begin{itemize}
%\item $t_{\cD_{i}}<t_{\cD_{i-1}}$,
%\item $$.
%\end{itemize}

The hypothesis \CR\,is responsible for a freeness result of the space of definite quaternionic modular forms as Hecke modules in \cite[\proposition 6.1]{Hsieh_Chida}. The application of this freeness result is twofold. On the algebraic side, it is used crucially in the level raising argument for the construction of Euler system and in the proof of second reciprocity law. On the analytic side, it implies the equality between two periods $\Omega_f$ and $\Omega_{f,N^-}$ up to a \padic unit.The assumption \eqref{PO} roughly says $f$ is not congruent to an eigenform which is Steinberg at $p$. It is needed for the application of the version of Ihara's lemma proved in \cite{Diaomond_Taylor:Inventione}. The hypothesis \CR\,is an analogue of (CR) in \cite{Pollack_Weston:AMU} for the weight two case. It might be weakened by a careful analysis of the method of the proof of \cite[\proposition 6.1]{Hsieh_Chida}. However, it seems difficult to remove \eqref{PO} unless one works out \padic Hodge theory used in \cite{Diaomond_Taylor:Inventione} for the case of semi-stable reduction.

Since we work in the higher weight situation, one might look for the Euler system arising from CM cycles on Kuga-Sato varieties over Shimura curves. Indeed, the first author in \cite{Chida;CM-cycle} adopts this construction and proves the first explicit reciprocity law for this Euler system without the ordinary assumption for $p$. However, there remain issues to be addressed in arithmetic geometry in order to prove the second explicit reciprocity law by a direct generalization of the proof of Bertolini-Darmon, %since it is difficult to show the level raising result for two admissible primes for the case of higher weight by using the Abel-Jacobi map. The use of the level raising result is crucial in the  proof the second explicit reciprocity law of Bertolini-Darmon.
 where the crucial ingredient is the surjectivity of the Abel-Jacobi map from the supersingular part of the Jacobian of a Shimura curve over finite fields to the unramified part of Galois cohomology $H_{\mathrm{fin}}^1(K_\ell,\Tfn)$ for $n$-admissible primes $\ell$. In this case, Bertolini and Darmon are able to reduce this surjectivity to a version of Ihara's lemma proved in \cite{Diaomond_Taylor:Inventione}, while in the higher weight case, we do not even know the surjectivity of the Abel-Jacobi map from the Chow groups to the unramified part of Galois cohomology.

This article is organized as follows. In \secref{S:Selmer}, we recall basic facts such as control theorems for various Selmer groups. In \secref{S:modularForms} and \secref{S:ShimuraCurve}, we review the theory of \padic modular forms on definite quaternion algebras and Shimura curves. In \secref{S:EulerSystem} and \secref{S:Reciprocity}, we give the construction of the Euler system and the proof of the explicit reciprocity laws. In \secref{S:Euler}, we carry out the Euler system argument and prove the main results.% \thmref{T:A} and \corref{C:A} in \thmref{T:main-theorem} and \corref{C:main} respectively.

\subsubsection*{Notation}
We fix once and for all an embedding
$\iota_\infty:\Qbar\hookto\C$ and an isomorphism
$\iota:\C\iso\C_\ell$ for each rational prime $\ell$, where $\C_\ell$ is the completion of the algebraic closure of $\Q_\ell$. Let
$\iota_\ell=\iota\iota_\infty:\Qbar\hookto\C_\ell$ be their composition. Let $\Ord_\ell:\C_\ell\to\Q\cup\stt{\infty}$ be the $\ell$-adic valuation on $\C_\ell$ normalized so that $\Ord_\ell(\ell)=1$.

Denote by $\Zhat$ the profinite completion of the ring $\Z$ of rational integers. For each place $q$, denote by $\Z_q$ the $q$-adic completion of $\Z$. If $M$ is an abelian group, let $\wh M=M\ot_\Z \Zhat$ and $M_\pme=M\ot_\Z\Z_\pme$. If $R$ is a commutative ring, let $M_R=M\ot_\Z R$.

If $L$ is a number field or a local field, denote by $\cO_L$ the ring of integers of $L$ and by $G_L$ the absolute Galois group. If $L$ is a local field, denote by $I_L$ the inertia group and by $L^{ur}$ the maximal unramified extension of $L$.

For a locally compact abelian group $S$, we denote by $S^\vee$ the \pont dual of $S$.

The letter $\ell$ always denotes a rational prime.

We will retain the notation in the introduction. In this article, in addition to \eqref{ST} and \eqref{ord}, we will assume the prime $p\ndivides ND_K$ satisfies
\beqcd{CR${}^+$1}\text{$p>k+1$ and $\#(\Fp^\x)^{k-1}>5$.}\eeqcd

%!TEX root = AIMC.tex

%\numberwithin{equation}{section}
\newcommand{\xto}[1]{\xrightarrow{#1}}
\newcommand{\incl}{\hookrightarrow}
\newcommand{\surj}{\twoheadrightarrow}
\renewcommand{\id}{\mathrm{id}}
\newcommand{\new}{\mathrm{new}}
\newcommand{\fin}{\mathrm{fin}}
\newcommand{\tilH}{\widetilde{H}}
\newcommand{\JF}{J^{(\ell_1)}(\F_{\ell_2^2})}
\newcommand{\An}{A_n}

\section{Selmer groups}\label{S:Selmer}
\subsection{Galois cohomology groups}\label{SS:GaloisCohomology}Let $L$ be an algebraic extension of $\Q$. For a discrete $G_L$-module $M$, we put
$$
H^1(L_\ell,M)=\bigoplus_{\lambda | \ell} H^1(L_{\lambda},M), \,
H^1(I_{L_\ell},M)=\bigoplus_{\lambda | \ell} H^1(I_{L_\lambda},M),
$$
where $\lambda$ runs over all primes of $L$ dividing $\ell$. Denote by $\res_\ell:H^1(L,M)\to H^1(L_\ell,M)$ the restriction map at $\ell$. Define the finite part of $H^1(L_\ell,M)$ by
$$
H^1_{\fin}(L_\ell,M)=\ker\stt{
H^1(L_\ell,M)\to H^1(I_{L_\ell},M)},
$$
and define the singular quotient of $H^1(L_{\ell},M)$ by
$$
H^1_{\sing}(L_{\ell},M)=H^1(L_{\ell},M)/H^1_{\fin}(L_{\ell},M).
$$
The natural map induced by the restriction $\partial_\ell:H^1(L,M)\to H^1_{\sing}(L_\ell,M)$ is called the \emph{residue map}. For $\kappa\in H^1(L,M)$ with $\partial_\ell(\kappa)=0$, we let \[v_{\ell}(s)\in H^1_\fin(L_\ell,M)\] denote the image of $\kappa$ under the restriction map at $\ell$.
\subsection{Selmer groups}We will retain the notation in the introduction. Recall that we work with the self-dual Galois representation $\rho^*_f:=\rho_{f}\ot\e^\frac{2-k}{2}:G_\Q\to \Aut_\cO T_f$ attached to the new form $f\in S_k^{\mathrm{new}}(\Gamma_0(N))$. Then it is known that $\rho_f^*$ satisfies
\begin{mylist}
\item $\rho_f^*$ is unramified outside $pN$.
\item The restriction of $\rho_f^*$ to $G_{\Qp}$ is of the form $\pMX{\chi_p^{-1}\e^\frac{k}{2}}{*}{0}{\chi_p\e^\frac{2-k}{2}}$, where $\chi_p$ is unramified and $\chi_p(\Frob_p)=\al_p(f)$.
\item For all $\ell$ dividing $N$ exactly, the restriction to $\rho_f^*$ to $G_{\Q_\ell}$ is of the form $\pMX{\pm\e}{*}{0}{\pm\bfone}$, where $\bfone$ denotes the trivial character.
\end{mylist}
Here the third property is a result of Carayol \cite{Carayol:GaloisHMF}.

Fix a uniformizer $\uf$ of $\cO$. Let $n$ be a positive integer. Let $\Tfn=T_f \slash \varpi^n T_f$ and $\Afn=\ker\stt{\uf^n:A_f\to A_f}$. As $G_\Q$-modules, we have $\Tfn\iso\Afn$ and $\Afn\iso\Hom_\cO(\Tfn,E/\cO(1))$.
\begin{defn}\label{D:admissibleprimes}
A rational prime $\ell$ is said to be $n$-admissible for $f$ if it satisfies the following conditions:
\begin{enumerate}
\item $\ell$ does not divide $pN$,
\item $\ell$ is inert in $K\slash \Q$,
\item $p$ does not divide $\ell^2-1$,
\item $\varpi^n$ divides $\ell^{\frac{k}{2}}+\ell^{\frac{k-2}{2}}-\epsilon_\ell \bfa_{\ell}(f)$ with $\epsilon_\ell \in \{ \pm 1 \}$.
\end{enumerate}
\end{defn}
Let $L/K$ be a finite extension. In what follows, we introduce a $G_{\Q_\ell}$-invariant submodule $F^+_\ell \Afn$ and define the ordinary part of $H^1(L_\ell,\Afn)$ for $\ell$ a prime factor of $pN^-$ or an $n$-admissible prime. For the prime $p$, we let $F_p^+V_f\subset V$ be the $E$-rank one $G_{\Qp}$-invariant subspace on which the inertia group $I_{\Qp}$ acts via $\e^{\frac{k}{2}}$. Let $F_p^+A_f$ be the image of $F_p^+V_f$ in $A_f$ and $F_p^+\Afn=F_p^+A_f\cap \Afn$. If $\ell\divides N^-$, we let $F_\ell^+V_f\subset V_f$ be the unique $E$-rank one subspace on which
$G_{\Q_\ell}$ acts by either $\e$ or $\e\tau_\ell$, where $\e$ is the $p$-adic cyclotomic character and $\tau_\ell$ is the non-trivial unramified quadratic character of $G_{\Q_\ell}$. Let $F_\ell^+A_f$ be the image of $F_\ell^+V_f$ in $A_f$ and $F_\ell^+\Afn=F_\ell^+A_f\cap \Afn$. If $\ell$ is $n$-admissible, then $V_f$ is unramified at $\ell$ and $(\Frob_\ell-\ep_\ell)(\Frob_\ell-\ep_\ell\ell)=0$ on $\Afn$ for the Frobenius $\Frob_\ell$ of $G_{\Q_\ell}$. We let $F_\ell^+\Afn\subset \Afn$ be the unique $\cO/(\uf^n)$-corank one submodule on which $\Frob_\ell$ acts via the multiplication by $\ep_\ell \ell$. In all cases, define the ordinary part of $H^1(L_\ell,\Afn)$ by
\[
H^1_{\Ord}(L_\ell,\Afn)=\ker\stt{H^1(L_\ell,\Afn)\to H^1(L_\ell,\Afn/F_\ell^+\Afn)}.
\]
The ordinary part $H^1_\Ord(L_\ell,\Tfn)$ of $H^1(L_\ell,\Tfn)$ can be defined in the same way.

Let $\Delta$ be a square-free integer such that $\Delta/N^-$ is a product of $n$-admissible primes (so $N^-\divides \Delta$) and let $S$ be a square-free integer with $(S,p\Delta N)=1$.
\begin{defn}
For $M=\Afn$ or $\Tfn$, we define the $\Delta$-ordinary Selmer group $\Sel_\Delta^S(L,M)$ attached to $(f,n,\Delta,S)$ to be the group of elements $s$ in $H^1(L,M)$ satisfying \begin{enumerate}
\item $\pd_\ell(s)=0$ for all $\ell\ndivides p\Delta S$,
\item  $\res_\ell(s)\in H^1_\Ord(L_\ell,M)$ at the primes $\ell\divides p\Delta$,
\item $\res_\ell(s)$ is arbitrary at the primes $\ell\divides S$.
\end{enumerate}
When $S=1$, we simply write $\Sel_\Delta(L,M)$ for $\Sel^1_\Delta(L,M)$.
\end{defn}

Let $m$ be a non-negative integer. Denote by $H_m$ the ring class field of conductor $p^m$. Let $G_m=\Gal(H_m/\cK)$ and let $H_\infty=\cup_{m=1}^\infty H_m$. Let $K_m=K_\infty\cap H_m$ and $\Gamma_m=\Gal(\cK_m/\cK)$. Then $K_\infty=\cup_{m=1}^\infty K_m$ and $\Gamma =\prolim_m\Gamma_m$. Let $\Lambda = \cO\powerseries{\Gamma}= \varprojlim_m \cO [\Gamma_m]$ be the one-variable Iwasawa algebra over $\cO$ and let $\frakm_\Lam$ be the maximal ideal of $\Lam$.
Let $H^1(K_\infty, \Afn)=\varinjlim_m H^1(K_m, \Afn)$ and
$\widehat{H}^1(K_\infty,\Tfn)=\varprojlim_m H^1(K_m,\Tfn)$,
where the injective limit is taken with respect to the restriction maps and
the projective limit is taken with respect to the corestriction maps. The local cohomology groups $H^1_{\bullet}(K_{\infty,\ell},\Afn)$ and $\wh H^1_{\bullet}(K_{\infty,\ell},\Tfn)$ for $\bullet\in\stt{\fin,\sing,\Ord}$ are defined in the same way. We define
\begin{align*}\Sel_\Delta(K_\infty,\Afn)=&\dirlim_m \,
\Sel_\Delta(K_m,\Afn);\\
\wh \Sel_\Delta^S(K_\infty, \Tfn)=&\prolim_m \,
\Sel_\Delta^S(K_m,\Tfn).\end{align*}
Then $\Sel_\Delta(K_\infty,\Afn)$ (resp. $\wh \Sel_\Delta^S(K_\infty, \Tfn)$) is a discrete (resp. compact) $\Lam$-module induced by the standard $\Gamma$-action. Recall that $\Sel(K_\infty,A_f)$ is the minimal Selmer group in the introduction.
\begin{prop}\label{P:3}Suppose that \CR\,holds. Then $\Sel_{N^-} (K_\infty,A_f)=\varinjlim_n\Sel_{N^-} (K_\infty,\Afn)$ is a $\Lam$-submodule of $\Sel(K_\infty,A_f)$ with finite index. If $\Sel_{N^-} (K_\infty,A_f)$ is $\Lam$-cotorsion, then $\Sel_{N^-}(K_\infty,A_f)=\Sel(K_\infty,A_{f})$. \end{prop}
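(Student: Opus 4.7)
My plan is to compare the two Selmer groups place-by-place via a Poitou-Tate-type argument, realizing the difference between $\Sel(K_\infty,A_f)$ and $\Sel_{N^-}(K_\infty,A_f)$ as a finite direct sum of local cohomology contributions. Both Selmer groups impose the $F_p^+$-ordinary condition at places above $p$, so no discrepancy arises there; the difference therefore reduces to local contributions at places above $N^-$ together with possible exceptional contributions from places above a prime $\ell\nmid pN^-$ where $\Frob_v$ has $1$ as an eigenvalue on $A_f$.

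At places $v$ above $\ell\nmid pN$, comparing local conditions at the finite level $\Afn$ with those at the infinite level $A_f$ shows that, away from the finitely many exceptional primes, the conditions coincide---since $H^1_{\mathrm{unr}}(K_{\infty,v},A_f)$ vanishes when $\Frob_v-1$ acts invertibly on $A_f$---while at the exceptional primes the discrepancy is a finite $\cO$-module. At places $v|N^-$, the Steinberg local structure $\pMX{\e}{*}{0}{\bfone}$ of $\rho_f^*$ combined with hypothesis \CR\,(iii), which forces $\bar\rho_f$ to be ramified at such primes when $\ell^2\con 1\pmod p$, guarantees that the local cohomology quotient measuring the discrepancy is again a finite $\cO$-module; hypothesis \CR\,(iv) handles the residual case $\ell\con-1\pmod p$ with $\ell^2\mid N$.

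A Poitou-Tate-type exact sequence in the Greenberg-Nekovar framework for comparing Selmer groups with differing local conditions then produces an embedding of the quotient into a finite direct sum $\bigoplus_{v\in\Sigma}Q_v$ over the finite set $\Sigma$ of bad places, with each $Q_v$ a finite $\cO$-module. This establishes the finite-index assertion. For the second statement, if $\Sel_{N^-}(K_\infty,A_f)$ is $\Lam$-cotorsion, then $\Sel(K_\infty,A_f)$ is likewise cotorsion and the finite quotient is pseudo-null as a $\Lam$-module; the standard fact that, under hypothesis \CR, the Pontryagin dual of $\Sel(K_\infty,A_f)$ has no nontrivial pseudo-null submodule then forces the quotient to vanish.

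The main obstacle will be the precise local analysis at primes $v|N^-$: carefully tracking the decomposition of such primes in the anticyclotomic $\Z_p$-extension, controlling the non-semisimple Steinberg structure of $\rho_f^*$, and extracting from hypothesis \CR\,(iii)--(iv) uniform bounds on the local contributions are all essential. A secondary subtlety is the behavior of the direct limit $\varinjlim_n\Sel_{N^-}(K_\infty,\Afn)$: one must verify that passing from finite-level cohomology classes on $\Afn$ to their images in $H^1(K_\infty,A_f)$ preserves exactly the required local conditions, which is where the hypothesis $p>k+1$ in \CR\,(i) enters to guarantee that the relevant local cohomology groups are annihilated by bounded powers of $\uf$.
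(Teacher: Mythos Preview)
The paper does not give a proof of this proposition at all; it simply cites \cite[Proposition~3.6]{Pollack_Weston:AMU}. So there is no in-paper argument to compare against, only the reference.

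Your outline is in the right spirit---Pollack and Weston do proceed by a place-by-place comparison of local conditions and then invoke, for the second assertion, the fact (due to Greenberg) that under \CR\ the Pontryagin dual of the Selmer group has no nonzero pseudo-null $\Lambda$-submodule. However, several of your details are off. First, your ``exceptional primes $\ell\nmid pN$ where $\Frob_v$ has eigenvalue $1$ on $A_f$'' do not exist: for the self-dual twist $\rho_f^*$, the Frobenius eigenvalues at a good prime $\ell$ have complex absolute value $\sqrt{\ell}$, so $1$ is never an eigenvalue on $V_f$, and hence $H^1_{\fin}(K_\lambda,A_f)=A_f/(\Frob_\lambda-1)A_f=0$ for every such place. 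The entire comparison away from $pN$ is therefore vacuous for the divisible module $A_f$; the only genuine content is at primes dividing $N$. Second, your invocation of \CR(iv) is misplaced: that condition concerns primes with $\ell^2\mid N$, which necessarily lie in $N^+$ since $N^-$ is square-free by \eqref{ST}, so it plays no role in the analysis at $N^-$. Third, you have not actually established the direction of inclusion $\Sel_{N^-}\subset\Sel$ claimed in the statement; this requires checking that the ordinary condition at $\ell\mid N^-$ is at least as restrictive as the trivial condition over $K_\infty$, which is where the local Steinberg analysis (and the ramification hypothesis in \CR(iii)) genuinely enters. So the skeleton of your argument matches the standard one, but the flesh you have put on it at primes away from $N^-$ is spurious, and the crucial step at $N^-$ is only gestured at.
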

\begin{proof} This is shown in the proof of \cite[Proposition\,3.6]{Pollack_Weston:AMU}. Note that $\Sel(K_\infty,f_n)$ \loccit is precisely our $\Sel_{N^-}(K_\infty,\Afn)$.
\end{proof}
\subsection{Local cohomology groups and local Tate duality}We gather some standard results on the local cohomology groups in this subsection.
\begin{lm}\label{L:2}Suppose that $\ell\not =p$. Then
\begin{enumerate}
\item
If $\ell$ is split in $K\slash \Q$, then
$$\widehat{H}^1_{\sing}(K_{\infty,\ell},\Tfn)=\stt{0}\,,\,
H^1_{\fin}(K_{\infty,\ell},\Afn)=\stt{0}.
$$
\item
If $\ell$ is non-split in $K\slash \Q$, then
$$
\widehat{H}^1_{\sing}(K_{\infty,\ell},\Tfn)
\iso H^1_{\sing}(K_{\ell},\Tfn)\otimes \Lambda
$$
and
$$
H^1_{\fin}(K_{\infty,\ell},\Afn)\iso
\Hom (H^1_{\sing}(K_{\ell},\Tfn)\otimes \Lambda, E \slash \cO).
$$
\end{enumerate}
\end{lm}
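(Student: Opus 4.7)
The plan is to compute both sides by analyzing how the prime $\ell$ decomposes in the anticyclotomic tower $K_\infty/K$, combined with inflation--restriction and local Tate duality at each finite layer. Throughout, because $K_\infty/K$ is unramified outside $p$ and $\ell\neq p$, for any prime $\lambda$ of $K$ above $\ell$ the extension $K_{\infty,\lambda}/K_\lambda$ sits inside $K_\lambda^{ur}$, with decomposition group $\Gamma_\lambda\subset\Gamma\cong\Z_p$ topologically generated by a Frobenius element $\sigma_\lambda$.

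For part (2), when $\ell$ is inert or ramified in $K/\Q$ there is a unique prime $\lambda$ above $\ell$ with $c(\lambda)=\lambda$, so $c\sigma_\lambda c^{-1}=\sigma_\lambda$. Combined with the anticyclotomic relation $c\gamma c^{-1}=\gamma^{-1}$ on $\Gamma$, this gives $\sigma_\lambda^{2}=1$, hence $\sigma_\lambda=1$ since $\Gamma\cong\Z_p$ is torsion-free. Therefore $\lambda$ splits completely in $K_\infty/K$: at the $m$-th layer there are $p^m$ primes over $\lambda$, all with completion $K_\lambda$, permuted simply transitively by $\Gamma_m$. This yields the $\cO[\Gamma_m]$-module identification
\[
H^{1}_{\bullet}(K_{m,\ell},M)\;\cong\;\cO[\Gamma_m]\otimes_\cO H^{1}_{\bullet}(K_\lambda,M),
\]
and the projective limit along corestriction (which on the group-ring factor is the natural projection $\cO[\Gamma_{m+1}]\to\cO[\Gamma_m]$) produces $\widehat H^{1}_{\sing}(K_{\infty,\ell},T_{f,n})\cong H^{1}_{\sing}(K_\ell,T_{f,n})\otimes\Lambda$. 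Local Tate duality at each finite layer identifies $H^{1}_{\fin}(K_{m,\ell},A_{f,n})$ with $\Hom_\cO(H^{1}_{\sing}(K_{m,\ell},T_{f,n}),E/\cO)$, and passing to the direct limit gives the second claim of (2).

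For part (1), when $\ell$ splits in $K/\Q$ as $\lambda\bar\lambda$ the $c$-symmetry only forces $\sigma_{\bar\lambda}=\sigma_\lambda^{-1}$, and the essential input is that $\sigma_\lambda$ is nontrivial in $\Gamma$ --- that is, no split rational prime decomposes completely in the anticyclotomic tower. Granting this, the surjection $\widehat\Z=\Gal(K_\lambda^{ur}/K_\lambda)\twoheadrightarrow\Gamma_\lambda$ sending Frobenius to $\sigma_\lambda$ factors through the projection $\widehat\Z\to\Z_p$ followed by multiplication by $\sigma_\lambda\neq 0$, so its kernel is precisely the prime-to-$p$ part $\widehat\Z^{(p)}=\prod_{q\neq p}\Z_q$. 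Inflation--restriction then identifies $H^{1}_{\fin}(K_{\infty,\lambda},A_{f,n})$ with $H^{1}(\widehat\Z^{(p)},A_{f,n}^{I_{K_\lambda}})$, which vanishes because $A_{f,n}$ is a $p$-primary finite module while $\widehat\Z^{(p)}$ is pro-prime-to-$p$. The vanishing of $\widehat H^{1}_{\sing}(K_{\infty,\lambda},T_{f,n})$ then follows by local Tate duality; summing over $\lambda$ and $\bar\lambda$ yields (1).

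The main obstacle is the input $\sigma_\lambda\neq 0$ in the split case, namely that no split rational prime splits completely in the anticyclotomic $\Z_p$-extension. Once this is granted, the remaining ingredients --- Shapiro-type identifications, passage through the tower, and the vanishing of $H^{1}$ of pro-prime-to-$p$ groups on $p$-primary modules --- are routine.
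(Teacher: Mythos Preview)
Your argument is correct and follows the standard route (which is what the paper invokes by citing \cite{Bertolini_Darmon:IMC_anti}). The only point you leave open --- that for $\ell$ split in $K$ the Frobenius $\sigma_\lambda$ is nontrivial in $\Gamma$ --- is indeed required, and it is a classical fact about anticyclotomic towers rather than a genuine obstacle. Here is a short justification: if $\sigma_\lambda$ were trivial in $\Gamma$, then $\Frob_\lambda$ would have finite order, say $n$, in $\Gal(H_{p^\infty}/K)=\varprojlim_m\Pic(\cO_{p^m})$, so for every $m$ the ideal $(\lambda\cap\cO_{p^m})^n$ would be principal in the order $\cO_{p^m}=\Z+p^m\cO_K$. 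Any two such generators differ by a unit of $\cO_K$, and since $\cO_K^\times$ is finite one obtains a single generator $\alpha$ lying in $\bigcap_m\cO_{p^m}=\Z$. But then $\lambda^n=\alpha\cO_K$ would be stable under complex conjugation, contradicting $\lambda\neq\bar\lambda$. With this input in hand, your inflation--restriction and local duality reductions go through exactly as written.
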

\begin{proof}
This is \cite[Lemma 2.4, Lemma 2.5]{Bertolini_Darmon:IMC_anti}. The argument there holds even if $\ell\divides N$.
\end{proof}

\begin{lm}\label{L:rank-one} If $\ell$ is $n$-admissible, then
\begin{enumerate}
\item
$H^1_{\sing}(K_{\ell},\Tfn)$ and
$H^1_{\fin}(K_{\ell},\Tfn)$
are both isomorphic to $\cO \slash \varpi^n \cO$.
\item
$\wh H^1_{\sing}(K_{\infty,\ell},\Tfn)$ and
$\wh H^1_{\fin}(K_{\infty,\ell},\Tfn)$
are free of rank one over $\Lambda \slash \varpi^n \Lambda$.
\item We have the decompositions \begin{align*}\wh H^1(K_{\infty,\ell},\Tfn)=\wh H^1_{\fin}(K_{\infty,\ell},\Tfn)\oplus \wh H^1_{\Ord}(K_{\infty,\ell},\Tfn);\\
H^1(K_{\infty,\ell},\Afn)=H^1_{\fin}(K_{\infty,\ell},\Afn)\oplus H^1_{\Ord}(K_{\infty,\ell},\Afn).
\end{align*}
\end{enumerate}
\end{lm}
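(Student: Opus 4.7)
The proof decomposes naturally along the three parts, with the central input being a $G_{\Q_\ell}$-stable splitting of $\Tfn$ coming from admissibility. Condition (4) of \defref{D:admissibleprimes} gives $\bfa_\ell(f)\ell^{(2-k)/2} \con \ep_\ell(\ell+1) \pmod{\varpi^n}$, so the characteristic polynomial of $\Frob_\ell$ on $\Tfn$ factors as $(X-\ep_\ell)(X-\ep_\ell\ell)$ modulo $\varpi^n$; condition (3), namely $p\nmid\ell^2-1$, forces the two roots to differ by a unit, so Hensel's lemma applied to this factorization in $\cO/\varpi^n[X]$ produces a $G_{\Q_\ell}$-stable decomposition $\Tfn = M^{\mathrm{ur}}\oplus M^{\mathrm{ord}}$ into free rank-one $\cO/\varpi^n$-modules on which $\Frob_\ell$ acts as $\ep_\ell$ and $\ep_\ell\ell$ respectively (with $M^{\mathrm{ord}} = F_\ell^+\Tfn$ by definition). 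Over $K_\ell$, the unramified quadratic extension of $\Q_\ell$, $\Frob_{K_\ell}=\Frob_\ell^2$ acts trivially on $M^{\mathrm{ur}}$ and as $\ell^2$ on $M^{\mathrm{ord}}$, with $\ell^2-1$ still a unit in $\cO/\varpi^n$.

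For part (1), since $\Tfn$ is unramified at $\ell$, $H^1_\fin(K_\ell,\Tfn) = \Tfn/(\Frob_{K_\ell}-1)\Tfn$ splits as $M^{\mathrm{ur}} \oplus M^{\mathrm{ord}}/(\ell^2-1)M^{\mathrm{ord}} \iso \cO/\varpi^n$. For the singular part, local Tate duality identifies $H^1_\sing(K_\ell,\Tfn)$ with the Pontryagin dual of $H^1_\fin(K_\ell,\Hom(\Tfn,\mu_{p^n}))$; a direct eigenvalue computation (exploiting $\ep_\ell^2 = 1$ to absorb the cyclotomic twist) shows $\Frob_\ell$ acts on $\Hom(\Tfn,\mu_{p^n})$ with the same eigenvalues $\ep_\ell,\ep_\ell\ell$, so the same argument delivers $\cO/\varpi^n$.

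Part (2) rests on the fact that an inert rational prime $\ell$ splits completely in the anticyclotomic $\Z_p$-tower $K_\infty/K$. I would verify this by exhibiting $\Gal(K_\infty/\Q)$ as the semidirect product $\Gamma\rtimes\langle c\rangle$ with $c$ acting on $\Gamma$ by inversion (the defining property of the anticyclotomic extension): any lift of $\Frob_\ell$ to $\Gal(K_\infty/\Q)$ projects to the nontrivial element of $\Gal(K/\Q)$, so has the form $(\gamma,c)$, and then $\Frob_\ell^2 = (\gamma-\gamma,1)=1$, so the unique prime of $K$ above $\ell$ has trivial Frobenius in $\Gamma$. Given this, \lmref{L:2}(2) combined with part (1) immediately yields $\wh H^1_\sing(K_{\infty,\ell},\Tfn) \iso H^1_\sing(K_\ell,\Tfn)\otimes_\cO\Lambda \iso \Lambda/\varpi^n\Lambda$, and the identical Shapiro-type argument with $\sing$ replaced by $\fin$ gives $\wh H^1_\fin(K_{\infty,\ell},\Tfn)\iso\Lambda/\varpi^n\Lambda$.

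For part (3), the $G_{K_\ell}$-splitting yields $H^1(K_\ell,\Tfn) = H^1(K_\ell,M^{\mathrm{ur}}) \oplus H^1(K_\ell,M^{\mathrm{ord}})$, the second summand being $H^1_\Ord(K_\ell,\Tfn)$ by definition of $F_\ell^+$. For the first, local duality together with the vanishing $H^1_\fin(K_\ell,\mu_{p^n}) = \mu_{p^n}/(\ell^2-1)\mu_{p^n} = 0$ forces $H^1_\sing(K_\ell,M^{\mathrm{ur}}) = 0$, so $H^1(K_\ell,M^{\mathrm{ur}}) = H^1_\fin(K_\ell,M^{\mathrm{ur}}) = H^1_\fin(K_\ell,\Tfn)$ (the $M^{\mathrm{ord}}$-contribution to $H^1_\fin$ having vanished in part (1)). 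Tensoring with $\Lambda$ via the split-completely argument gives the decomposition over $K_{\infty,\ell}$, and the analogous computation for $\Afn$ (via direct limit) follows the same pattern. The one subtle step is the split-completely computation in part (2); everything else is eigenvalue bookkeeping modulo $\varpi^n$.
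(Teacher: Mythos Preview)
Your proof is correct and follows essentially the same approach as the paper: both split $\Tfn$ (equivalently $\Afn$) into rank-one eigencomponents for $\Frob_\ell$ using condition (3) of admissibility, compute $H^1_\fin$ directly as Frobenius coinvariants, obtain the decomposition $H^1=H^1_\fin\oplus H^1_\Ord$ at the $K_\ell$-level, and then invoke \lmref{L:2}(2) (the complete splitting of $\ell$ in $K_\infty/K$) to pass to $K_{\infty,\ell}$. Your write-up supplies more detail---the Hensel argument for the splitting, the explicit anticyclotomic computation $(\gamma,c)^2=1$, and a separate duality argument for $H^1_\sing$---but the structure is the same as the paper's (terse) proof.
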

\begin{proof}
%A direct calculation shows that $ H^1_\sing(K_\ell,\Afn)=H^1(I_{K_\ell},\Afn)^{G_{K_\ell}}\iso F_\ell^+\Afn$ and $\Afn^{G_{K_\ell}}\iso \Afn/F_\ell^+\Afn$. Therefore,
%\[H^1(K_\ell^{ur}/K_\ell,(F^+_\ell\Afn)^{I_{K_\ell}})=\stt{0},\quad H^1_\Ord(K_\ell,\Afn)=H^1(K_\ell,F_\ell^+\Afn)\iso F^+_\ell\Afn.\]
%We this obtain a direct sum \[H^1(K_\ell,\Afn)=H^1_\Ord(K_\ell,\Afn)\oplus H^1_\fin(K_\ell,\Afn).\] The lemma follows from these and \lmref{L:2} (2).
Since $\Afn$ is unramified at $\ell$, we have a direct sum $\Afn=F_\ell^+\Afn\oplus F_\ell^-\Afn$ as $G_{\Q_\ell}$-modules, where $F_\ell^-\Afn=(\Frob_\ell-\ep_\ell\ell)\Afn$. An easy calculation shows that
\[H^1_{\fin}(K_{\ell},\Afn)=H^1(K_\ell^{ur}/K_\ell,\Afn)=H^1(K_\ell^{ur}/K_\ell,F_\ell^-\Afn)\iso F_\ell^-\Afn.\]
We thus have
\begin{align*}H^1(K_{\ell},\Afn)=&H^1(K_{\ell},F^-_\ell\Afn)\oplus H^1(K_{\ell},F_\ell^+\Afn)\\
=&H^1_{\fin}(K_{\ell},\Afn)\oplus H^1_\Ord(K_{\ell},\Afn).
\end{align*}
Combined with \lmref{L:2} (2), the lemma follows immediately. \qedhere
\end{proof}

Since $\Afn$ and $\Tfn$ are isomorphic to their Cartier duals, the pairing induced by the cup product on the Galois cohomology gives rise to the collection of local Tate pairings at the primes above $\ell$ over $K_m$:
$$
{ \langle \, , \,  \rangle }_{m,\ell}:H^1(K_{m,\ell}, \Tfn)\times H^1(K_{m,\ell}, \Afn)\to E \slash \cO .
$$
Taking the limit with $m$, we obtain a perfect pairing
$$
{ \langle \, , \,  \rangle }_{\ell}:
\widehat{H}^1(K_{\infty,\ell}, \Tfn)\times H^1(K_{\infty,\ell}, \Afn) \to E \slash \cO .
$$
These pairings are compatible with the action of $\Lambda$, so they induce an isomorphism of $\Lam$-modules
\[\widehat{H}^1(K_{\infty,\ell}, \Tfn) \iso H^1(K_{\infty,\ell}, \Afn)^\vee.\]
The following result is well-known.
\begin{prop}\label{P:annihilator}Suppose that $\ell\not =p$. %Then \begin{enumerate}
%\item $H^1_{\fin}(K_{m,\ell},\Afn)$ and $H^1_{\fin}(K_{m,\ell},\Tfn)$ are orthogonal complements under the pairing $\pairing_{m,\ell}$. In particular, $H^1_\fin(K_{m,\ell},\Afn)$ and $H^1_{\sing}(K_{m,\ell},\Tfn)$ are the Pontryagin dual of each other .
%\item
$H^1_{\fin}(K_{\infty,\ell},\Afn)$ and $\widehat{H}^1_{\fin}(K_{\infty,\ell},\Tfn)$ are orthogonal complements under the pairing $\pairing_{\ell}$. In particular, $H^1_{\fin}(K_{\infty,\ell},\Afn)$ and $\widehat{H}^1_{\sing}(K_{\infty,\ell},\Tfn)$
are the Pontryagin dual of each other.
%\end{enumerate}
\end{prop}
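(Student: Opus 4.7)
The plan is to reduce the Iwasawa-theoretic statement to classical local Tate duality at finite level, then pass to the limit using the adjointness of restriction and corestriction under the local Tate pairing. The second assertion is then formal.

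First, at each finite level $K_m$ and each place $\lambda\mid\ell$ of $K_m$, the classical local Tate duality for the self-Cartier-dual pair $(\Tfn,\Afn)$ (with $\ell\neq p$, hence residue characteristic prime to the order of the coefficients) gives a perfect pairing
$$H^1(K_{m,\lambda},\Tfn)\times H^1(K_{m,\lambda},\Afn)\to\Q_p/\Z_p,$$
under which the unramified subgroups $H^1_{\fin}(K_{m,\lambda},\Tfn)=H^1(K_{m,\lambda}^{ur}/K_{m,\lambda},\Tfn^{I_{K_{m,\lambda}}})$ and $H^1_{\fin}(K_{m,\lambda},\Afn)$ are exact annihilators of each other. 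Summing over the finitely many places $\lambda\mid\ell$ gives the corresponding orthogonal complement statement for $\pairing_{m,\ell}$ on $H^1(K_{m,\ell},\cdot)$.

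Next, I pass to the Iwasawa-theoretic pairing. For $m'\geq m$, the restriction map $\res\colon H^1(K_{m,\ell},\Afn)\to H^1(K_{m',\ell},\Afn)$ and the corestriction map $\cores\colon H^1(K_{m',\ell},\Tfn)\to H^1(K_{m,\ell},\Tfn)$ send finite parts to finite parts, and are adjoint with respect to the local Tate pairings:
$$\pair{\cores(x)}{y}_{m,\ell}=\pair{x}{\res(y)}_{m',\ell}.$$
This adjointness is exactly what makes the limit pairing $\pairing_\ell$ well-defined on $\wh H^1(K_{\infty,\ell},\Tfn)\times H^1(K_{\infty,\ell},\Afn)$. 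Given $x=(x_m)\in\wh H^1_{\fin}(K_{\infty,\ell},\Tfn)$ and $y\in H^1_{\fin}(K_{\infty,\ell},\Afn)$ (so $y$ is the image of some $y_m\in H^1_{\fin}(K_{m,\ell},\Afn)$), we have $\pair{x}{y}_\ell=\pair{x_m}{y_m}_{m,\ell}=0$ by the finite-level orthogonality. This shows $\wh H^1_{\fin}\subset (H^1_{\fin})^\perp$.

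For the reverse inclusion (the main point, to get \emph{orthogonal complement} rather than mere orthogonality), I combine the perfectness of the full limit pairing $\widehat{H}^1(K_{\infty,\ell},\Tfn)\iso H^1(K_{\infty,\ell},\Afn)^\vee$ asserted just before the proposition with the finite-level orthogonal complement property. Concretely, suppose $x=(x_m)$ annihilates $H^1_{\fin}(K_{\infty,\ell},\Afn)$. Then for each $m$, the class $x_m$ pairs trivially with the image of $H^1_{\fin}(K_{m,\ell},\Afn)$ inside the direct limit; but by adjointness this image is computed by restrictions, and a standard cofinality argument (using that restriction sends finite to finite and that finite-part cohomology is finite at each level) shows $x_m\in H^1_{\fin}(K_{m,\ell},\Tfn)$ for every $m$, so $x\in\wh H^1_{\fin}(K_{\infty,\ell},\Tfn)$. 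The main obstacle is this last step: one has to verify carefully that the relevant direct/inverse limits play well with the orthogonal-complement condition, using that in the non-split case \lmref{L:2} identifies everything with $H^1(K_\ell,\cdot)\otimes\Lambda$ and in the split case \lmref{L:2} makes both finite parts vanish so the statement is empty.

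Finally, the second assertion is purely formal: $\widehat{H}^1_{\sing}(K_{\infty,\ell},\Tfn)=\widehat{H}^1(K_{\infty,\ell},\Tfn)/\widehat{H}^1_{\fin}(K_{\infty,\ell},\Tfn)$ by definition, so dualizing the short exact sequence and using that $\widehat{H}^1_{\fin}$ is the exact annihilator of $H^1_{\fin}$ identifies $\widehat{H}^1_{\sing}(K_{\infty,\ell},\Tfn)$ with $H^1_{\fin}(K_{\infty,\ell},\Afn)^\vee$, as desired.
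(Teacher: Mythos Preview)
Your argument is correct; it is the standard reduction to finite-level local Tate duality followed by passage to the limit via the $\res$/$\cores$ adjunction. The paper itself gives no argument at all: its proof consists of the single sentence ``This is well-known. For example, see \cite[Proposition\,1.4.3]{Rubin:EulerSystem}.'' So your write-up is not so much a different route as an unpacking of the cited reference.

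One small remark on your reverse-inclusion step: the ``cofinality argument'' you allude to is in fact immediate. If $x=(x_m)$ annihilates $H^1_{\fin}(K_{\infty,\ell},\Afn)$, then for any $y_m\in H^1_{\fin}(K_{m,\ell},\Afn)$ its image $[y_m]$ in the direct limit lies in $H^1_{\fin}(K_{\infty,\ell},\Afn)$, and by definition of the limit pairing $0=\pair{x}{[y_m]}_\ell=\pair{x_m}{y_m}_{m,\ell}$; so $x_m$ already annihilates all of $H^1_{\fin}(K_{m,\ell},\Afn)$ and hence lies in $H^1_{\fin}(K_{m,\ell},\Tfn)$ by the finite-level statement. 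No further limit gymnastics or appeal to \lmref{L:2} is needed here.
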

\begin{proof}This is well-known. For example, see \cite[Proposition\,1.4.3]{Rubin:EulerSystem}.\end{proof}

\begin{prop}\label{P:annihilator_ord}Suppose that $\ell$ is $n$-admissible or $\ell$ divides $pN^-$. Assume further that \eqref{PO} holds and
\beqcd{CR${}^+$3}\text{$\bar\rho_f$ is ramified at $\ell$ if $\ell\divides N^-$ and $\ell^2\con 1\pmod{p}$.}\eeqcd
Then we have \begin{mylist} \item
$H^1_{\ord}(K_{\infty,\ell},\Afn)$ and $\widehat{H}^1_{\ord}(K_{\infty,\ell},\Tfn)$ are orthogonal complements under the local Tate pairing. \item If $\ell$ is $n$-admissible, then $\wh H^1_\fin(K_{\infty,\ell},\Tfn)$ and $H^1_{\Ord}(K_{\infty,\ell},\Afn)$ are the \pont dual of each other.\end{mylist}
\end{prop}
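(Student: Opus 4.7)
The plan is to reduce both assertions to a single duality input: that the submodule $F^+_\ell \Tfn \subset \Tfn$ is \emph{isotropic} under the self-duality pairing $\Tfn \otimes \Afn \to \mu_{p^n}$, so that the short exact sequence $0 \to F^+_\ell \Tfn \to \Tfn \to \Tfn/F^+_\ell \Tfn \to 0$ is Cartier-dual to $0 \to F^+_\ell \Afn \to \Afn \to \Afn/F^+_\ell \Afn \to 0$.

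I would verify this isotropy case by case. At $\ell = p$, the explicit form $\rho_f^*|_{G_{\Qp}} = \pMX{\chi_p^{-1}\e^{k/2}}{*}{0}{\chi_p\e^{(2-k)/2}}$ recalled at the start of this section shows that the two diagonal characters multiply to $\e$, so Cartier duality swaps the sub and the quotient; the auxiliary hypothesis \eqref{PO} is used precisely to keep these two characters distinct modulo $\uf$ when $k=2$, so that $F^+_p \Tfn$ is a genuine $G_{\Qp}$-stable summand. For $\ell \mid N^-$, the diagonal characters are $\e$ and $\bfone$ (possibly twisted by the unramified quadratic $\tau_\ell$), whose product is again $\e$; the hypothesis (CR${}^+$3) guarantees that $F^+_\ell \Tfn$ remains well-defined modulo $\uf^n$ even in the critical case $\ell^2 \equiv 1 \pmod p$, since $\bar\rho_f$ is then ramified and the line is pinned down by inertia. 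For $n$-admissible $\ell$, $\Frob_\ell$ acts on $F^+_\ell \Afn$ and on the quotient by $\ep_\ell \ell$ and $\ep_\ell$ respectively; since $(\ep_\ell\ell) \cdot (\ep_\ell) = \ell = \e(\Frob_\ell)$, Cartier duality interchanges them.

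With isotropy established, part (i) would follow from the standard local duality principle: the image of $H^1(L_\ell, F^+_\ell \Tfn)$ in $H^1(L_\ell, \Tfn)$ and the image of $H^1(L_\ell, F^+_\ell \Afn)$ in $H^1(L_\ell, \Afn)$ are exact orthogonal complements under local Tate pairing. Orthogonality is immediate since the cup product factors through $F^+_\ell \Tfn \otimes F^+_\ell \Afn \to 0$; tightness is a cardinality count using local Euler characteristics together with \propref{P:annihilator}. By the long exact sequence of Galois cohomology these images coincide with $H^1_{\Ord}(L_\ell, \Tfn)$ and $H^1_{\Ord}(L_\ell, \Afn)$ respectively. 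Passing to $L = K_m$ and taking the appropriate limit, using the compatibility of local Tate duality with restriction and corestriction (which preserves finite/ordinary/singular decompositions), establishes (i).

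Part (ii) then follows by combining (i) with \lmref{L:rank-one}(3). The direct-sum decomposition $\wh H^1(K_{\infty,\ell}, \Tfn) = \wh H^1_{\fin}(K_{\infty,\ell}, \Tfn) \oplus \wh H^1_{\Ord}(K_{\infty,\ell}, \Tfn)$ identifies the quotient $\wh H^1(K_{\infty,\ell}, \Tfn) / \wh H^1_{\Ord}(K_{\infty,\ell}, \Tfn)$ with $\wh H^1_{\fin}(K_{\infty,\ell}, \Tfn)$; but by (i) that quotient is the Pontryagin dual of $H^1_{\Ord}(K_{\infty,\ell}, \Afn)$. The principal obstacle is the verification of isotropy and tightness at $\ell = p$, where the local Euler characteristic formula differs from the $\ell \neq p$ case and one must be attentive to the size of $H^0(K_{m,p}, F^+_p \Afn)$ versus the rank-one nature of the ordinary filtration; the remaining cases are handled uniformly by the character-level calculations above.
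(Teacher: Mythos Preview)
Your approach is correct and close in spirit to the paper's, but organized more uniformly around a single principle. The paper proceeds case by case: for $n$-admissible $\ell$ it simply invokes the explicit splitting of \lmref{L:rank-one} together with \propref{P:annihilator}; for $\ell\mid N^-$ it first establishes the cardinality identity $\#H^1_\Ord(K_\ell,\Afn)^2 = \#H^1(K_\ell,\Afn)$ and then proves bare orthogonality by subdividing into the subcase $\bar\rho_f$ unramified at $\ell$ (where (CR${}^+$3) forces $\ell^2\not\equiv 1\pmod p$, hence $H^2(K_\ell,\cOn(\e^2))=0$) and the subcase $\bar\rho_f$ ramified (where one checks directly that $H^1_\fin=H^1_\Ord$ and appeals to \propref{P:annihilator}); for $\ell=p$ it uses \lmref{L:5} to get a clean short exact sequence on $H^1$ and then the vanishing of $H^2(L_p,\cOn(\e^k))$. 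Your isotropy-plus-standard-duality argument subsumes all of these at once: once $F^+_\ell$ is isotropic, the images of $H^1(L_\ell,F^+_\ell\Tfn)$ and $H^1(L_\ell,F^+_\ell\Afn)$ are automatically exact orthogonal complements because the Tate-dual of $H^1(L_\ell,F^+_\ell\Tfn)\to H^1(L_\ell,\Tfn)$ is $H^1(L_\ell,\Afn)\to H^1(L_\ell,\Afn/F^+_\ell\Afn)$.

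One small correction to your commentary: your diagnosis of where \eqref{PO} and (CR${}^+$3) enter is off. Isotropy of $F^+_\ell$ holds unconditionally---the self-duality pairing on $V_f$ is symplectic, so every line is isotropic, and this descends to $\Tfn$; neither hypothesis is needed for that, nor for $F^+_\ell\Tfn$ to be a well-defined $G_{\Q_\ell}$-stable \emph{submodule} (it is never claimed to be a summand). In the paper's hands-on argument, \eqref{PO} enters only through \lmref{L:5} to simplify the $\ell=p$ cohomology calculation, and (CR${}^+$3) only to kill $H^2(K_\ell,\cOn(\e^2))$ in the unramified-at-$\ell\mid N^-$ subcase. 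In your more conceptual framework these hypotheses appear to be superfluous for part~(1), which is a mild bonus of your approach.
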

\begin{proof}The assertions for $n$-admissible primes follow from \lmref{L:rank-one} and \propref{P:annihilator}. Let $\ell\divides N^-$. We prove part (1) for $\ell$.
A simple calculation shows that
\[\#(H^1(K_\ell,\Afn))=\#(\Afn^{G_{K_\ell}})^2,\quad \#(H^1_\Ord(K_\ell,\Afn))=\#(\Afn^{G_{K_\ell}}).\]
Therefore, it suffices to show $H^1_\Ord(K_\ell,\Afn)$ and $H^1_\Ord(K_\ell,\Tfn)$ are orthogonal to each other. If $\bar\rho_f$ is unramified at $\ell$, then $\ell^2\not\con 1\pmod{p}$ by \eqref{CR${}^+$3}, and the orthogonality follows from $H^2(K_\ell,\cO/\uf^n\cO(\e^2))=\stt{0}$. If $\bar\rho_{f}$ is ramified at $\ell$, then \[H^1(K^{ur}_\ell/K_\ell,F_\ell^+\Afn)\isoto H^1_\fin(K_\ell,\Afn)\hookto H^1_\Ord(K_\ell,\Afn).\] In addition, it is to easy to see $\#(H^1_\fin(K_\ell,\Afn))=\#(\Afn^{G_{K_\ell}})$.
This shows $H^1_\fin(K_\ell,\Afn)=H^1_\Ord(K_\ell,\Afn)$, and the assertion follows from \propref{P:annihilator}.

 We consider the case $\ell=p$. Let $L/K$ be a finite extension in $K_\infty$. By the following \lmref{L:5}, we have the duality $(F_p^+\Afn)^\vee(1)\iso \Afn/F_p^+\Afn\iso \cOn(\chi_p\e^\frac{2-k}{2})$ and by a simple calculation, we find that $H^1(L_p,F_p^+\Afn)\isoto H^1_\Ord(L_p,\Afn)$ and an exact sequence
\[0\to H^1(L_p,F_p^+\Afn)\to H^1(L_p,\Afn)\to H^1(L_p,\Afn/F_p^+\Afn)\to 0.\]
The assertion for $\ell=p$ now follows from $H^2(L_p, \cOn(\e^k))=\stt{0}$ for $k>1$.
\end{proof}

\begin{lm}\label{L:5} Suppose that \eqref{PO} holds. Then we have $H^0(L_p,\Afn/F_p^+\Afn)=\stt{0}$.% and $\wh H^1_\Ord(K_\infty,\Afn)=\wh H^1(K_\infty,\Afn)$.
\end{lm}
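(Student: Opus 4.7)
The plan is to identify explicitly the character by which $G_{\Qp}$ acts on the quotient $A_f/F_p^+A_f$ and reduce the vanishing of $H^0$ to checking that this character is non-trivial modulo $\uf$ on $G_{L_p}$. By the ordinary shape of $\rho_f^*|_{G_{\Qp}}$ recalled at the beginning of \S1.2, the quotient $V_f/F_p^+V_f$ affords the character $\chi_p\e^{(2-k)/2}$, where $\chi_p$ is the unramified character with $\chi_p(\Frob_p)=\al_p(f)$. Hence $\Afn/F_p^+\Afn\cong \cOn(\chi_p\e^{(2-k)/2})$ as $G_{\Qp}$-modules. Since $\cOn$ is local with residue field $\F$, a nonzero $G_{L_p}$-fixed vector would survive in the mod $\uf$ reduction, so it suffices to show that $\overline{\chi_p\e^{(2-k)/2}}$ is non-trivial on $G_{L_p}$.

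For $k>2$, I would restrict to inertia: $\chi_p$ is unramified, so the mod $\uf$ character on $I_{L_p}$ is $\omega^{(2-k)/2}$, where $\omega=\e\bmod\uf$ is the Teichm\"uller character. The image $\omega(I_{L_p})$ is the whole of $\F_p^\x$: indeed $\Qp(\zeta_p)/\Qp$ is totally tamely ramified of degree $p-1$, while $K/\Q$ is unramified at $p$ (we assume $p\nmid ND_\cK$) and $K_\infty/K$ is totally wildly ramified at primes above $p$, so the tame extension $L_p(\zeta_p)/L_p$ still has degree $p-1$. Under \eqref{CR${}^+$1} we have $p>k+1$, hence $0<(k-2)/2<(p-1)/2$, so the exponent $(2-k)/2$ is nonzero modulo $p-1$ and $\omega^{(2-k)/2}$ is non-trivial on $I_{L_p}$.

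For $k=2$, the character reduces to $\chi_p$ itself, which is unramified, so I would argue via Frobenius. Because $K_\infty/K$ is totally ramified at $p$, the residue degree $m:=[\kappa(L_p):\F_p]$ equals $1$ when $p$ splits in $\cK$ and $2$ when $p$ is inert. Therefore $\overline{\chi_p}$ is non-trivial on $G_{L_p}$ precisely when $\bfa_p(f)^m\not\equiv 1\pmod{\uf}$ for $m\in\{1,2\}$, which is exactly what the hypothesis \eqref{PO} that $\bfa_p(f)^2\not\equiv 1\pmod{p}$ provides.

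The only real point requiring care is the first step of Case $k>2$: guaranteeing that the image of $I_{L_p}$ under $\omega$ is all of $\F_p^\x$. This uses crucially that $p$ is unramified in $\cK/\Q$ and that the anticyclotomic extension is pro-$p$, so no tame part of inertia is lost in passing from $\Qp$ to $L_p$; the case $k=2$ is purely a Frobenius eigenvalue calculation and is immediate from \eqref{PO}.
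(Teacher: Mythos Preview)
Your proof is correct and follows exactly the paper's two-case strategy: for $k>2$ argue via inertia and the mod-$p$ cyclotomic character (using $p>k+1$), and for $k=2$ argue via the Frobenius eigenvalue and \eqref{PO}. One small point of care in the $k=2$ case: you assert that $K_\infty/K$ is totally ramified above $p$, giving residue degree exactly $1$ or $2$; the paper does not assume this and instead only uses that $\Gal(L/K)$ is a $p$-group, so the residue degree of $L_v/\Q_p$ is $r_v\cdot p^s$ for some $s\ge 0$, and then notes that $\al_p(f)^{r_v p^s}\equiv 1\pmod{\uf}$ if and only if $\al_p(f)^{r_v}\equiv 1\pmod{\uf}$ since the $p$-th power map is injective on $\F^\times$. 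With that minor adjustment your argument coincides with the paper's.
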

\begin{proof}
Let $\Tfn^-:=\Tfn/F_p^+\Tfn$. Let $v$ be a place of $L$ above $p$. Since $L_v(\mu_{p-1})$ and $L_v^{ur}$ are linear disjoint, the cyclotomic character $\varepsilon: I_{L,v}\to \Z_p^\times\to \F_p^\times$ is surjective. If $k>2$,
%and $\frac{k}{2}-1<p-1$, 
then
\[(\Tfn^-)^{I_{L,v}}=(\cOn(\e^{1-\frac{k}{2}}))^{I_{L_v}}={0}\Longrightarrow (\Tfn^-)^{G_{L_v}}=\{ 0 \} .\]
If $k=2$,  then the Frobenius $\Frob_v$ acts on $\Tfn^-$ by a scalar
$\al_p(f)^{r_v}$, where $\al_p(f)$ is the unit root of the Hecke polynomial $X^2-\bfa_p(f)X+p^{k-1}$
and $r_v=1$ if $p$ is split and $r_v=2$ if $p$ is inert. By \eqref{PO}, $\al_p(f)^2-1$ is a $p$-adic unit. Then we find that
\[(\Tfn^-)^{G_{L_v}}=(\Tfn^-)[(\mathrm{Frob}_v)^{p^s}-1]=(\Tfn^-)[\al_p(f)^{r_v p^s}-1]=\{ 0 \}.\]
Here $p^s$ is the inertia degree of $L_v/K_v$. This completes the proof.
\end{proof}
\subsection{Global reciprocity}
By the global reciprocity law of class field theory, for $\kappa \in \widehat{H}^1(K_\infty,\Tfn)$ and $s\in H^1(K_\infty,\Afn)$, we have
$$
\sum_{q\textup{: prime}} \langle \res_q(\kappa) , \res_q(s)  \rangle_q =0.
$$
Since the local conditions of $\wh\Sel_\Delta^S(K_\infty,\Tfn)$ and $\Sel_\Delta (K_\infty,\Afn)$
are orthogonal at the primes not dividing $S$,
if $\kappa$ belongs to $\wh\Sel_\Delta^S(K_\infty,\Tfn)$ and $s$ belongs to $\Sel_\Delta (K_\infty,\Afn)$,
then we have
\beq\label{E:globalRec}
\sum_{q\divides S} \langle \partial_q (\kappa ), v_q(s)  \rangle_q =0.\eeq
%In particular, for $\kappa \in \wh\Sel_\Delta^\ell(K_\infty,\Tfn)$ and for $s\in \Sel_\Delta (K_\infty,\Afn)$, we have
%$\langle \partial_\ell (\kappa ), v_\ell(s)  \rangle_\ell =0$.

\subsection{Control theorem (I)}
Let $S$ be a square-free product of $n$-admissible primes. We prove control theorems for the discrete Selmer groups $\Sel^S_\Delta(K_\infty,\Afn)$. Let $L/K$ be a finite extension in $K_\infty$.
\begin{prop}\label{P:control1}With \CR\,and  \eqref{PO}, assume further that
\beqcd{Irr} \text{the residual Galois representation $\bar\rho_f:G_\Q \to \Aut_\cO(A_{f,1})$ is absolutely irreducible.}\eeqcd
Then
\begin{mylist}
\item the restriction maps \[H^1(K,\Afn)\to H^1(L,\Afn)^{\Gal(L/K)},\quad\Sel_\Delta^S(K,\Afn)\to \Sel_\Delta^S(L,\Afn)^{\Gal(L/K)}\] are isomorphisms.
\item  $H^1(K,\Afn)=H^1(L,A_f)[\uf^n]$ and $\Sel_\Delta^S(L,\Afn)=\Sel_\Delta^S(L,A_f)[\uf^n]$.
\end{mylist}
\end{prop}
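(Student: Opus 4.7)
The plan is to deduce both assertions from the vanishing $\Afn^{G_L}=0$ together with a prime-by-prime verification that the Selmer conditions transport correctly.

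First, note that $L/\Q$ is Galois because the anticyclotomic extension $K_\infty/\Q$ is Galois (complex conjugation acts on $\Gamma$ by inversion) and $L$ is preserved by $\Gal(K_\infty/K)$. Consequently $A_{f,1}^{G_L}$ is a $G_\Q$-stable $\F$-subspace of $A_{f,1}$, which by \eqref{Irr} equals either $0$ or all of $A_{f,1}$; the latter would force $\bar\rho_f|_{G_L}$ to be trivial and hence $\bar\rho_f$ to have finite solvable image, contradicting absolute irreducibility. A d\'evissage along $0\to A_{f,1}\to\Afn\to A_{f,n-1}\to 0$ then gives $\Afn^{G_L}=0$ for all $n$, and passing to the direct limit, $A_f^{G_L}=0$.

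These vanishings yield the underlying cohomological isomorphisms. Inflation-restriction for $\Gal(L/K)$ acting on $\Afn$ immediately produces $H^1(K,\Afn)\isoto H^1(L,\Afn)^{\Gal(L/K)}$, the first map of (i). The Kummer sequence $0\to\Afn\to A_f\xto{\uf^n}A_f\to 0$ combined with $A_f^{G_L}=0$ gives $H^1(L,\Afn)\isoto H^1(L,A_f)[\uf^n]$, the underlying global isomorphism for (ii). It remains to check that the Selmer conditions are preserved under these two maps at each prime $\ell$. For $\ell\mid S$ nothing is imposed. For $\ell\nmid p\Delta S$, the extension $L_\lambda/K_\ell$ is unramified at every prime $\lambda\mid\ell$, since the anticyclotomic tower ramifies only above $p$; hence $I_{K_\ell}=I_{L_\lambda}$ and the finite/singular decomposition is manifestly functorial and commutes with $\uf^n$-reduction. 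For $\ell\mid p\Delta$, writing $H^1_\Ord$ as the kernel of restriction to $\Afn/F_\ell^+\Afn$ and applying inflation-restriction and the Kummer sequence to this quotient at each $\lambda\mid\ell$, the compatibility reduces to the vanishing $H^0(L_\lambda,\Afn/F_\ell^+\Afn)=0$. At $\ell\mid N^-$ this follows from the explicit description of $\rho_f^*|_{G_{\Q_\ell}}$ recalled at the beginning of this section. At $\ell=p$ it is precisely Lemma~\ref{L:5}, for which \eqref{PO} is essential.

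The main obstacle is therefore the ordinary condition at $p$: absent \eqref{PO}, the quotient $\Afn/F_p^+\Afn$ could acquire nontrivial $G_{L_p}$-invariants (for instance when $\al_p(f)^2\con 1\pmod{p}$ in weight two), which would spoil both the Kummer comparison in (ii) and the matching of ordinary conditions between $K$ and $L$ in (i). Once these local vanishings are secured, the remaining verifications are routine Galois-cohomological manipulations.
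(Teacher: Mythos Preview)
Your global setup (vanishing of $\Afn^{G_L}$ via \eqref{Irr}, inflation--restriction, the Kummer sequence) is correct and parallels the paper. The gap is in the local verification at primes $\ell\mid\Delta$.

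You assert that the compatibility of the ordinary condition at such $\ell$ reduces to $H^0(L_\lambda,\Afn/F_\ell^+\Afn)=0$, and that for $\ell\mid N^-$ this follows from the shape of $\rho_f^*|_{G_{\Q_\ell}}$. This is false. By definition $F_\ell^+V_f$ is the line on which $G_{\Q_\ell}$ acts through $\e$ or $\e\tau_\ell$, so on the quotient $G_{\Q_\ell}$ acts through $\bfone$ or $\tau_\ell$; since $\ell$ is inert in $K$, both characters restrict trivially to $G_{K_\ell}$, hence $(\Afn/F_\ell^+\Afn)^{G_{L_\lambda}}=\Afn/F_\ell^+\Afn\cong\cOn\neq 0$. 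The same failure occurs at the $n$-admissible primes $\ell\mid\Delta/N^-$, which you do not treat: there $\Frob_\ell$ acts on the quotient by $\epsilon_\ell=\pm 1$, so $\Frob_\ell^2$ acts trivially and again the $G_{K_\ell}$-invariants are everything.

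The paper does not try to kill these invariants. Instead it uses that every $\ell\mid\Delta$ is inert in $K$, and that primes inert in $K$ split completely in the anticyclotomic tower $K_\infty/K$; thus $L_\lambda=K_\ell$ for each $\lambda\mid\ell$, and the restriction map $H^1(K_\ell,\Afn/F_\ell^+\Afn)\to H^1(L_\ell,\Afn/F_\ell^+\Afn)$ is the identity. With this correction your argument for part (1) goes through; the same observation handles part (2) at these primes as well.
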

\begin{proof}
Since $K_\infty/\Q$ is a Galois extension, by the residual irreducibility of $\overline{\rho}_{f}$ we have $\Tfn^{G_{K_\infty}}=0$, and hence we have an isomorphism $H^1(K,\Afn)\iso H^1(L,\Afn)^{\Gal(L/K)}$. To show part (1), it suffices to show
\begin{enumerate}
\item[(a)] $H^1(K_\ell^{ur},\Afn)\to H^1(L_\ell^{ur},\Afn)$ is injective for $\ell\ndivides p\Delta$, and
\item[(b)]  $H^1(K_\ell,\Afn/F_\ell^+\Afn)\to H^1(L_\ell,\Afn/F_\ell^+\Afn)$ is injective for $\ell\divides p\Delta$.
\end{enumerate}
Since $L/K$ is unramified outside $p$ and anticyclotomic, for each place $\lam$ of $L$ above $\ell\not =p$, we have $K_\lam^{ur}=L_\lam^{ur}$. This verifies (a). Part (b) for $\ell\divides \Delta$ follows from the fact that $K_\lam=L_\lam$ for any prime $\lam$ of $L$ above $\ell$ which is non-split in $K$. Part (b) for $\ell=p$ is an immediate consequence of \lmref{L:5}.

Part (2)  can be proved in the same way. In view of the exact sequence
\[0\to \Afn\to A_f\stackrel{\uf^n}\to A_f\to 0\]
and the fact that $\Afn^{G_L}\subset \Afn^{G_{K_\infty}}=0$, we have $H^1(L,\Afn)=H^1(L,A_f)[\uf^n]$, and hence the natural map $\Sel_\Delta^S(L,\Afn)\hookto\Sel_\Delta^S(L,A_f)$ is injective. To show the surjectivity, it remains to show
\begin{enumerate}
\item[(a1)] $H^1(L_\ell^{ur},\Afn)\to H^1(L_\ell^{ur},A_f)$ is injective for $\ell\ndivides p\Delta$, and
\item[(b1)]  $H^1(L_\ell,\Afn/F_\ell^+\Afn)\to H^1(L_\ell,A_f/F_\ell^+A_f)$ is injective for $\ell\divides p\Delta$.
\end{enumerate}
The injectivity of (a1) and (b1) for $\ell\not =p$ is deduced from the fact that $A_f^{I_{L,\ell}}$ and $(A_f/F_\ell^+A_f)^{G_{L_\ell}}$ are $p$-divisible under the hypothesis \CR. The case (b1) for $\ell=p$ follows from \lmref{L:5}.
\end{proof}

% !TEX root = AIMC.tex

\def\TBU{\bbT_B(N^+,p^n)}
\newcommand\Hecke[3]{\bbT_{#1}(#2,#3)}

%Let $N$ be a positive integer. Decompose $N=N^+N^-$, where $N^+$ is only divisible by prime split in $\cK$ and $N^-$ is only divisible by primes inert or ramified in $\cK$. In this article, we assume that
%\beqcd{St}\text{ $N^-$ is the square-free product of an odd number of primes}.\eeqcd

%Let $p$ be a distinguished rational prime such that
%\[p\ndivides N=N^+N^-.\]
%For each positive integer $m$, denote by $H_m$ the ring class field of conductor $p^n$. Let $H_\infty=\cup_{m=1}^\infty H_m$. Let $\cK_\infty$ be the maximal (anticyclotomic) $\Zp$-extension %of $\cK$ in $H_\infty$ and let $K_m=K_\infty\cap H_m$. Let $G_m=\Gal(H_m/\cK)$ and let $G_m=\Gal(\cK_m/\cK)$. Let $\Gamma_\infty=\Gal(\cK_\infty/\cK)=\prolim_mG_m\iso \Zp$.

\section{\padic modular forms on definite quaternion algebras}\label{S:modularForms}
\subsection{Hecke algebras of quaternion algebras}\label{SS:Heckealgebra}
Let $B$ be a quaternion algebra over $\Q$ and let $\Bhat=B\ot_\Z\Zhat$ be the profinite completion of $B$. Denote by $\Delta_B$ the absolute discriminant of $B$. If $\Sg$ is a positive integer, then $\wh B^\x=\wh B^{(\Sg)\x}\wh B_{(\Sg)}^\x$, where
\[\Bhat^{(\Sg)\x}=\stt{x\in \Bhat^\x\mid x_\pme =1\,\forall \pme\divides \Sg}\,;\,\wh B_{(\Sg)}^\x=\stt{x\in \wh B^\x\mid x_\pme=1\,\forall \pme\ndivides\Sg}.\]
If $\cU$ is an open compact subgroup of $\Bhat^\x$ and $x\in \wh B^\x$, we denote by $\cU_\pme$ (resp. $x_\pme$) the local component of $\cU$ (resp. $b$) for each prime $\pme$.

The Hecke algebra $\cH(B^\x,\cU)$ is the space of $\Z$-valued bi-$\cU$-invariant functions on $\Bhat^\x$ with compact support, equipped with the convolution product
\[(\al*\beta)(x)=\int_{\Bhat^\x}\al(y)\beta(y^{-1}x)dy,\]
where $dy$ is the Haar measure on $\Bhat^\x$ normalized so that $\vol(\cU,d y)=1$. For each $x\in\Bhat^\x$, denote by $[\cU x\cU]$ the characteristic function of the set $\cU x\cU$.
%We denote by $\cH^{(\Sg)}(B^\x,\cU)$ %(resp. $\cH_{(\Sg)}(B^\x,\cU)$)
%the subalgebra of $\cH(\Bhat^\x,\cU)$ generated by
%$[\cU x\cU]$ with $x\in\wh B^{(\Sg)\x}$.% (resp. $x\in\wh B_{(\Sg)}^\x$).
For each prime $\pme$, we set
\beq\label{E:1}\pi_\pme=\pDII{\pme}{1},\,z_\pme=\pDII{\pme}{\pme}\in\GL_2(\Q_\pme).\eeq

We fix an identification $i:\wh B^{(\Delta_B)}\iso M_2(\wh\Q^{(\Delta_B)})$. Let $M^+$ be a positive integer with $(M^+,\Delta_B)=1$ and let $R_{M^+}$ be the Eichler order in $B$ of level $M^+$ with respect to $i$. Suppose that $p\ndivides \Delta_B$. For a non-negative integer $n$, we define a special open compact subgroup $\cU_{M^+,p^n}\subset \wh B^\x$ by
\beq\label{E:opncmpt}\cU_{M^+,p^n}:=\stt{x\in \wh R_{M^+}^\x\mid x_p\con \pMX{a}{b}{0}{a}\pmod{p^n},\,a,b\in\Zp}.\eeq
Let $\cU=\cU_{M^+,p^n}$. Let $\Sg:=p^nM^+\Delta_B$. By definition, we have $i:(\wh B^{(\Sg)\x},\cU^{(\Sg)})\iso (\GL_2(\wh\Q^{(\Sg)}),\GL_2(\Zhat^{(\Sg)}))$. Denote by $\bbT_B^{(\Sg)}(M^+)$ the subalgebra of $\cH(B^\x,\cU)$ generated by
$[\cU x\cU]$ with $x\in\wh B^{(\Sg)\x}$. Then
\[\bbT_B^{(\Sg)}(M^+)=\Z[\,T_\pme,S_\pme,S_\pme^{-1}\mid \pme\ndivides\Sg\,],\]
where $T_\pme$ and $S_\pme$ are the standard Hecke operators at the prime $\pme$ given by
\beq\label{E:2}T_\pme=[\cU \,i^{-1}(\pi_\pme)\,\cU],\quad  S_\pme=[\cU\,i^{-1}(z_\pme)\,\cU].\eeq
We proceed to introduce Hecke operators at $\pme\divides\Sg$. If $\pme\divides\Delta_B$, choose $\pi_\pme'\in B^\x_\pme$ with $\rmN(\pi_\pme')=\pme$. Define the Hecke operator $U_\pme$ at $\pme\divides M^+\Delta_B$ by
\beq\label{E:Upoperators}U_\pme=[\cU\,i^{-1}(\pi_\pme)\,\cU]\text{ if }\pme\divides M^+,\quad U_\pme=[\cU\pi'_\pme \cU]\text{ if }\pme\divides\Delta_B.\eeq
If $n=0$, then $\cU=\wh R_{M^+}^\x$, and we define
\beq\label{E:HeckealgebraI}\bbT_B(M^+)=\Z\left[\stt{T_\pme,S_\pme,S^{-1}_\pme\mid \pme\ndivides\Sg},\stt{U_\pme\mid \pme|\Sg}\right]=\bbT_B^{(\Sg)}(M^+)\left[\stt{U_\pme\mid \pme\divides\Sg}\right].\eeq
Suppose that $n>0$. For the prime $p$, we define Hecke operators $U_p$ and $\Dmd{a}$ for each $a\in\Zp^\x$ by
\beq\label{E:Diamondop}U_p=[\cU\,i^{-1}(\pi_p)\,\cU],\quad \Dmd{a}=[\cU\,i^{-1}(\bfd(a))\,\cU]\quad(\bfd(a)=\pDII{a}{1}\in\GL_2(\Zp)).\eeq
Let $\Hecke{B}{M^+}{p^n}$ be the (commutative) subring of $\cH(B^\x,\cU)$ generated by Hecke operators at all primes. Namely,
\beq\label{E:Heckealgebra}\begin{aligned}\Hecke{B}{M^+}{p^n}=&\Z\left[\stt{T_\pme,S_\pme,S^{-1}_\pme\mid \pme\ndivides\Sg},\stt{U_\pme\mid \pme\divides M^+\Delta_B},\stt{U_p,\Dmd{a}\mid a\in\Zp^\x}\right]\\
=&\bbT_B^{(\Sg)}(M^+)\left[\stt{U_\pme,\Dmd{a}\mid \pme\divides\Sg,\,a\in\Zp^\x}\right].\end{aligned}\eeq
We call $\Hecke{B}{M^+}{p^n}$ ($\bbT_B(M^+)$ if $n=0$) the complete Hecke algebra of level $\cU$. For each prime $\ell$, let $\bbT_B^\setl(M^+,p^n)$ be the subring of $\Hecke{B}{M^+}{p^n}$ generated by Hecke operators at primes $\pme\not =\ell$.
\subsection{Definite quaternion algebras}
We recall some notation from \cite[\S2.1]{Hsieh_Chida}. Let $\cK$ be the imaginary quadratic field with the discriminant $-D_\cK<0$ and let $\Diff=\sqrt{-D_\cK}$. Write $z\mapsto \ol{z}$ for the complex conjugation on $\cK$. Define $\CMP\in\cK$ by
%\[\CMP=\frac{\delta}{2}\text{ if }4\mid D_\cK\,;\, \CMP=\frac{\delta^2-\delta}{2}\text{ if } 4\ndivides D_\cK.\]
\[\CMP=\frac{D'+\delta}{2},\,D'=\begin{cases}D_\cK&\text{ if }2\ndivides D_\cK,\\
D_\cK/2&\text{ if } 2\divides D_\cK.
\end{cases}\]
Then $\cO_\cK=\Z+\Z\cdot\CMP$ and $\CMP\ol{\CMP}$ is a local uniformizer of primes that are ramified in $\cK$.

Let $B$ be the definite quaternion algebra over $\Q$ with discriminant $\Delta_B$. Suppose that
\beq\label{E:assumption}N^-\mid \Delta_B,\,(\Delta_B,N^+)=1\text{ and }p\ndivides \Delta_B.\eeq Assume further that every prime factor of $\Delta_B/N^-$ is inert in $\cK$. Thus, we can regard $\cK$ as a subalgebra of $B$.
Write $\rmT$ and $\rmN$ for the reduced trace and norm of $B$ respectively. Let $\frakp$ be the prime of $\cK$ above $p$ induced by $\iota_p:\cK\hookto\Cp$. We choose a basis of $B=\cK\oplus \cK\cdot\cmJ$ over $\cK$ such that
\begin{itemize}
\item[(a)] $\cmJ^2=\beta\in\Q^\x$ with a square-free $\beta<0$ and $\cmJ t=\ol{t}\cmJ$ for all $t\in\cK$,
\item[(b)] $\beta\in(\Z_\pme^\x)^2$ for all $\pme\divides N^+$ and $\beta\in\Z_\pme^\x$ for $\pme\divides D_\cK$.
\end{itemize}
The existence of such $\cmJ$ can be seen as follows. We can always choose some $J'\in B^\x$ satisfying (a) by Noether-Skolem theorem, and the strong approximation theorem ensures the existence of $\cmJ=\beta J'$ with the property (b) for some suitable $\beta\in\cK^\x$.

Fixing a square root $\sqrt{\beta}\in\Qbar$ of $\beta$, we require the fixed isomorphism $i=\prod_{\pme\ndivides \Delta_B}i_\pme:\wh B^{(\Delta_B)}\iso M_2(\wh\Q^{(\Delta_B)})$ is chosen so that for each finite place $\pme |pN^+$, the isomorphism $i_\pme:B_\pme=B\ot_\Q\Qq\iso M_2(\Qq)$ is given by
\beq\label{E:embedding.W}i_\pme(\bftheta)=\pMX{\rmT(\bftheta)}{-\rmN(\bftheta)}{1}{0};\quad i_\pme(\cmJ)=\sqrt{\beta}\cdot \pMX{-1}{\rmT(\bftheta)}{0}{1}\quad(\sqrt{\beta}\in\Z_\pme^\x),\eeq
and for each finite place $\pme\ndivides p N^+\Delta_B$, $i_\pme:B_\pme\iso M_2(\Qq)$ satisfies
 \beq\label{E:21.W}i_{\pme}(\cO_\cK\ot\Z_\pme)\subset M_2(\Z_\pme).\eeq
Hereafter, we shall identify $\wh B^{(\Delta_B)\x}$ with $M_2(\wh\Q^{(\Delta_B)})$ via $i$ and let \[\cU=\cU_{N^+,p^n}\] be the open compact subgroup as in \eqref{E:opncmpt}. By definition,
\beq\label{E:3}\cU\supset \Zhat^\x\,;\quad\cU_\ell=\GL_2(\Z_\ell)\text{ if }\ell\ndivides pN^+\Delta_B.\eeq
%Finally, we define
%\beq\label{E:2.W}\begin{aligned}i_\cK: B&\hookto M_2(\cK)\\
%a+b\cmJ&\mapsto i_\cK(a+ b\cmJ):=\pMX{a}{b\beta}{\ol{b}}{\ol{a}}\quad(a,b\in\cK)\end{aligned}\eeq
%and let $i_\C:B\to M_2(\C)$ be the composition $i_\C=\iota_\infty\circ i_\cK$.
\subsection{\padic modular forms}
Let $A$ be a \padic ring. Let $k\geq 2$ be an even integer and let $L_k(A)=A[X,Y]_{k-2}$ be the space of homogeneous polynomials of degree $k-2$ over $A$. Let $\rho_k:\GL_2(A)\to \Aut_AL_k(A)$ be the unitary representation defined by
 \[\rho_k(h)P(X,Y)=\det(h)^{-\frac{k-2}{2}}\cdot P((X,Y)h)\quad( h\in\GL_2(A),\,P(X,Y)\in L_k(A)).\]
%Denote by $N_\cU$ be the product of finite primes such that $\pme\ndivides N^-$ and $\cU_\pme\not=\GL_2(\Z_\pme)$.
Define the space $\SBk(\cU,A)$ of \padic modular forms on $B^\x$ of weight $k$ and level $\cU$ by
\[\SBk(\cU,A)=\stt{f:B^\x\bksl \wh B^\x\to L_{k}(A)\mid f(bu)=\rho_k(u_p^{-1})f(b),\,u=(u_\pme)\in \cU}.\]
The space $\SBk(\cU,A)$ is equipped with $\Hecke{B}{N^+}{p^n}$-action defined by
\beq\begin{aligned}\label{E:HeckeFormulae} [\cU x\cU].f(b)=&\sum_{u\in \cU/\cU\cap x\cU x^{-1}}f(bux)\text{ if }x\in \wh B^{\setp\x};\\
\Dmd{a}.f(b)=&\rho_k(\bfd(a))f(b\bfd(a))\quad(a\in\Zp^\x);\\
U_p.f(b)=&\sum_{u\in \cU_p/\cU_p\cap \pi_p\cU_p\pi_p^{-1}}\rho_k(u)\wtd\rho_k(\pi_p)f(bu\pi_p)\quad (\pi_p=\pDII{p}{1}\in\GL_2(\Qp)),\end{aligned}\eeq
where $\wtd\rho_k(\pi_p)\in\End_A L_k(A)$ is defined by $\wtd\rho_k(\pi_p)P(X,Y):=P(pX,Y)$. If $p$ is invertible in $A$, then
\[U_p.f(b)=p^\frac{k-2}{2}\cdot \sum_{u\in \cU_p/\cU_p\cap \pi_p\cU_p\pi_p^{-1}}\rho_k(u\pi_p)f(bu\pi_p).\]

We note that the operator $S_\pme$ acts trivially and $U_\pme$ is an involution if $\pme\divides\Delta_B$ on $\SBk(\cU,A)$.
%\begin{remark} If $p$ is not a zero divisor in $A$, the above formulae can write as
%\[[\cU x\cU].f(b)=\abs{\det x_{p}}_p^{\frac{2-k}{2}}\cdot \sum_{\al}\rho_k(y_{\al,p})f(by_\al)\quad(\cU x\cU=\disjoint_{\al} y_\al\cU).\]
%\end{remark}
\subsection{}\label{SS:weighttwo}
We shall write $\SB(\cU,A)=\SB_2(\cU,A)$ for the space of $A$-valued modular forms of weight two and level $\cU$. Denote by $\XB(\cU)$ the finite set
\[\XB(\cU):=B^\x\bksl \wh B^\x/\cU.\]
For each $b\in\wh B^\x$, denote by $[b]_\cU$ the point in $\XB(\cU)$ represented by $b$. The Hecke algebra $\Hecke{B}{N^+}{p^n}$ acts on the divisor group $\Z[\XB(\cU)]$ by Picard functoriality. By definition, we have a canonical identification:
 \beq\label{E:weighttwo}\SB(\cU,\Z)=\stt{f:\XB(\cU)\to \Z}\iso \Z[\XB(\cU)].\eeq

Define the \emph{Atkin-Lehner involution} $\tau_n\in \wh B_{(pN^+)}^\x$ by $\tau_{n,\pme}=\pMX{0}{1}{p^nN^+}{0}$ if $\pme\divides p^nN^+$ %, %$\tau_\pme=\cmJ$ if $\pme\divides \Delta_B$
 and $\tau_{n,\pme}=1$ if $\pme\ndivides pN^+$. Then $\tau_n$ normalizes $\cU$, and hence induces a right action on $\XB(\cU)$. Define a perfect pairing $\pairing_{\cU}:\SB(\cU,A)\x \SB(\cU,A)\to A$ by
\beq\label{E:pairing}\pair{f_1}{f_2}_\cU=\sum_{[b]_\cU\in \XB(\cU)}f_1(b)f_2(b\tau_n)\cdot\#((B^\x\cap b\cU b^{-1})/\Q^\x)^{-1}.\eeq
Then the action of $\TBU$ on $\cS^B(\cU,A)$ is self-adjoint with respect to $\pairing_\cU$. Namely,
\[\pair{tf_1}{f_2}_\cU=\pair{f_1}{tf_2}_\cU\text{ for all }t\in\TBU.\]

\section{Shimura curves}\label{S:ShimuraCurve}
\subsection{}\label{SS:ShimuraCurve}
 We recall some basic facts of geometry of Shimura curves, following the exposition in \cite[\S1]{Nekovar:Level_raising}. In this section, let $\ell\ndivides N^+\Delta_B$ be a rational prime which is \emph{inert} in $\cK$ and $B'$ be the indefinite quaternion algebra over $\Q$ with discriminant $\Delta_B\ell$. We fix a $\Q$-embedding $t':\cK\hookto B'$ and an isomorphism $\vp_{B,B'}:\wh B^\setl\iso \wh B^{\prime\setl}$ once and for all such that $t'$ induces the composite map
\[\wh\cK^\setl\to \wh B^\setl \stackrel{\vp_{B,B'}}\longto \wh B^{\prime\setl}.\]
We put
\[\cU_0(\ell):=\cU_{N^+\ell,p^n}=\stt{u\in \cU\mid u_\ell\con \pMX{*}{*}{0}{*}\pmod{\ell}}.\]
Let $\cO_{B'_\ell}$ be the maximal order of $B'_\ell$. Let $\cU'$ be the open compact subgroup of $\wh B^{\prime \x}$ given by \beq\label{E:opncpmt'}\cU':=\vp_{B,B'}(\cU^\setl)\cO_{B'_\ell}^\x.\eeq
We denote by $M_{\cU'}{}_{/\Q}$ the Shimura curve attached to $B'$ of level $\cU'$. The complex uniformization of $M_{\cU'}$ is given by
\[M_{\cU'}(\C)=B^{\prime\x}\bksl (\C\setminus\R)\xx \wh B^{\prime\x} /\cU'.\]
For $z\in \C\setminus\R$ and $b'\in \wh B^{\prime\x}$, denote by $[z,b']_{\cU'}$ the point of $M_{\cU'}(\C)$ represented by $(z,b')$.

\subsection{$\ell$-adic uniformization of Shimura curves}\label{S:ladicuniform}
There is an integral model $\cM_{\cU'}$ of $M_{\cU'}\ot_\Q\Q_\ell$ over $\Z_\ell$, which is projective over $\Z_\ell$ if $\cU'$ is sufficiently small \cite[Theorem\,(3.4)]{Boutot_Carayol:Drinfeld}. We review the description of the completion $\wh\cM_{\cU'}$ of $\cM_{\cU'}$ along the special fibre due to \v{C}erednik \cite{Cerednik} and Drinfeld \cite{Drinfeld:CoveringPadic}. Let $\cH_\ell$ be Drinfeld's $\ell$-adic upper half plane, which is a rigid analytic variety over $\Q_\ell$ and an analytic subspace of $\bfP_{\Q_\ell}^{1,\an}$. The $\C_\ell$-valued points of $\cH_\ell$ are $\cH_\ell(\C_\ell)=\bfP^1(\C_\ell)\setminus \bfP^1(\Q_\ell)$. Then $\wh\cM_{\cU'}$ is canonically identified with
\beq\label{E:specialFibre}B^\x\bksl \wh\cH_\ell\wh\ot_{\Z_\ell}\Zhat_\ell^{ur}\xx\wh B^{\setl\x}/\cU^\setl,\eeq
where $\wh\cH_\ell$ is a natural formal model of $\cH_\ell$ and $b\in B^\x$ acts on $\wh\cH_\ell$ (resp. on $\Zhat_\ell^{ur}$) via the natural action of $B^\x\subset B_\ell^\x\stackrel{i_\ell}\iso\GL_2(\Q_\ell)$ on $\bfP^{1,\an}_{\Q_\ell}$ (resp. by $\Frob_\ell^{\Ord_\ell(\rmN(b))}$) (\cite[Theorem\,5.2]{Boutot_Carayol:Drinfeld}). Denote by $M_{\cU'}^{\an}$ the rigid analytification of $M_{\cU'}\ot\Q_\ell$. Then $M_{\cU'}^{\an}\iso\wh\cM_{\cU'}\ot\Q_\ell$ the generic fibre of $\wh\cM_{\cU'}$, and
\[M_{\cU'}^{\an}=B^\x\bksl \cH_\ell\wh\ot_{\Q_\ell}\wh\Q_\ell^{ur}\xx \wh B^{\setl\x}/\cU^\setl.\]
Since $\cU^\setl\supset \Zhat^{\setl \x}$,
\beq\label{E:Clpoints}M_{\cU'}(\C_\ell)=B^\x\bksl\cH_\ell(\C_\ell)\xx \wh B^{\setl \x}/\cU^\setl.\eeq
%where $b\in B^\x$ acts on $\Z/2\Z$ via $j\mapsto j+\Ord_\ell(\rmN(b))$.
\subsection{Bad reduction of Shimura curves}
Let $\sT_\ell=\cV(\sT_{\ell})\disjoint \cE(\sT_\ell)$ be the dual graph of the special fibre of $\wh\cH_\ell$, where $\cV(\sT_\ell)$ and $\cE(\sT_\ell)$ denote the set of vertices and edges of $\sT_\ell$ respectively. Then $\sT_\ell$ is the Bruhat-Tits tree of $B_\ell^\x\iso\GL_2(\Q_\ell)$. Let $\overrightarrow{\cE}(\sT_\ell)$ be the set of oriented edges. We have the identifications:
\[\cV(\sT_\ell)=B_\ell^\x/\cU_\ell\Q_\ell^\x,\,\overrightarrow{\cE}(\sT_\ell)=B^\x_\ell/\cU_0(\ell)_\ell\Q_\ell^\x.\]

Let $\cG=\cV(\cG)\disjoint\cE(\cG)$ be the dual graph of the special fibre of $\wh\cM_{\cU'}\ot_{\Z_\ell}\cO_{\cK_\ell}$. The set $\cV(\cG)$ of vertices of $\cG$ consists of the irreducible components of $\cM_{\cU'}\ot_{\Z_\ell}\F_{\ell^2}$, and the set $\cE(\cG)$ of edges of $\cG$ consists of the singular points in $\wh\cM_{\cU'}\ot_{\Z_\ell} \F_{\ell^2}$. Let $\red_\ell:M_{\cU'}(\C_\ell)\to \cG=\cV(\cG)\disjoint\cE(\cG)$ be the reduction map. By \cite[Proposition\,1.5.5]{Nekovar:Level_raising}, \eqref{E:specialFibre} induces an identification:
\beq\label{E:vertices}\begin{aligned}\cV(\cG)=B^\x\bksl\left( \cV(\sT_\ell)\xx\Z/2\Z\xx \wh B^{\setl\x}/\cU^\setl\right)&=B^\x\bksl\left(B_\ell^\x/U_\ell\Q_\ell^\x\xx\Z/2\Z\xx \wh B^{\setl\x}/\cU^\setl\right)\\
&\isoto \left(B^\x\bksl \wh B^\x /\cU\right)\xx\Z/2\Z=\XB(\cU)\xx \Z/2\Z,\end{aligned}\eeq
where the last isomorphism is given by
\[B^\x\left(b_\ell \cU_\ell,j,b^\setl \cU^\setl\right)\mapsto \left([b_\ell b^\setl]_\cU,j+\Ord_\ell(\rmN(b_\ell))\right)\]
and
\[\overrightarrow{\cE}(\cG)\isoto \left(B^\x\bksl \wh B^\x/\cU_0(\ell)\right)\xx\Z/2\Z=\XB(\cU_0(\ell))\xx \Z/2\Z.\]
We will regard $\cE(\cG)$ as a subset of $\overrightarrow{\cE}(\cG)$  via
\beq\label{E:edges}\cE(\cG)=\XB(\cU_0(\ell))\isoto \XB(\cU_0(\ell))\x\stt{0}\hookto \overrightarrow{\cE}(\cG). \eeq
 %The reduction map $\red_\ell:\cH_\ell(\C_\ell) \to \sT_\ell=\cV(\sT_\ell)\disjoint \cE(\sT_\ell)$ (\cf\cite[Chapter I]{Boutot_Carayol:Drinfeld}) gives rise to t
%\subsection{Hecke algebras of $B'$}

%Choose $\pi'_\ell\in B_\ell^{\prime \x}$ with $\rmN(\pi_\ell')=\ell$ and let $U'_\ell=[\cU' \pi_\ell'\cU']$. We extend $\vp_*:\bbT(\cU_0(\ell))\to \cH(B^{\prime\x},\cU')$ by sending $U_\ell\mapsto \vp_*(U_\ell):=U'_\ell$. Let $\bbT_{B'}(\cU'):=\vp_*(\bbT(\cU_0(\ell))$ be the (complete) Hecke algebra of $B'$ with level $\cU'$

\subsection{Bad reduction of the Jacobian of the Shimura curve}\label{SS:badreduction}
Let $J(M_{\cU'})$ be the Jacobian of the Shimura curve $M_{\cU'}$. If $L/\Q$ is a field extension, let $\Div^0 M_{\cU'}(L)$ be the group of divisors on $J(M_{\cU'})(L)$ of degree zero on each connected component of $M_{\cU'}\ot_\Q\ol{L}$. For $D\in\Div^0 M_{\cU'}(L)$, denote by $cl(D)\in J(M_{\cU'})(L)$ the point represented by $D$. The prime-to-$\ell$ Hecke algebra $\cH^\setl(B^{\prime\x},\cU')$ acts on $J(M_{\cU'})$ via the Hecke correspondence on $M_{\cU'}$ and Picard functoriality (\cf\cite[\S1.3.4]{Nekovar:Level_raising}).
The isomorphism $\vp_{B,B'}:\wh B^{\setl}\iso \wh B^{\prime\setl}$ induces an isomorphism
\[\vp_*:\cH^\setl(B^\x,\cU_0(\ell))\isoto \cH^\setl(B^{\prime\x},\cU'),\quad [\cU_0(\ell) x\cU_0(\ell)]\mapsto [\cU'\vp_{B,B'}(x)\cU']\quad(x\in B^{\setl\x}).\]
We extend $\vp_*$ to a ring homomorphism
\[\vp_*:\Hecke{B}{\ell N^+}{p^n}\surjto\Hecke{B'}{N^+}{p^n}\to\End(J(M_{\cU'})_{/\Q})\]
by defining $\vp_*(U_\ell):=[\cU'\pi'_\ell\cU']$ for some $\pi_\ell'\in B'_\ell$ with $\rmN(\pi'_\ell)=\ell$.

Let $\cJ$ be the \Neron model of $J(M_{\cU'})_{/\Q_\ell}$ over $\Z_\ell$. The universal property of \Neron models induces a ring homomorphism
\beq\label{E:4}\vp_*:\Hecke{B}{\ell N^+}{p^n}\to\End(J(M_{\cU'})_{/\Q_\ell})=\End(\cJ).\eeq
Let $\cJ_s$ be the special fibre $\cJ\ot_{\Z_\ell}\F_{\ell^2}$ and $\cJ_s^\circ$ be the connected component of the identity of $\cJ_s$.  Let $\Phi_{M_{\cU'}}=\cJ_s/\cJ_s^\circ$ be the group of connected components of $\cJ_s$. Then $\Phi_{M_{\cU'}}$ is an \etale group scheme over $\F_{\ell^2}$ with a natural $\Hecke{B}{\ell N^+}{p^n}$-module structure induced by \eqref{E:4}. Let
\[r_\ell:J(M_{\cU'})(\cK_\ell)\to\Phi_{M_{\cU'}} \]
be the reduction map.

We recall a description of $\Phi_{M_{\cU'}}$ in terms of the graph $\cG$. Define the source and target maps $s,t:\overrightarrow{\cE}(\cG)\to \cV(\cG)$ so that for each oriented edge $e$, $s(e)\in\cV(\cG)$ is the source of $e$ and $t(e)\in \cV(\cG)$ is the target of $e$.
%Let $\Z[\cV(\cG)]_0$ be the submodule of degree zero divisors in $\Z[\cV(\cG)]$.
Define the morphisms
\beq\label{E:d*}\begin{CD}\Z[\cE(\cG)]@>d_*=-s_*+t_*>>\Z[\cV(\cG)],\quad \Z[\cV(\cG)]@>d^*=-s^*+t^*>> \Z[\cE(\cG)].\end{CD}\eeq
%Consider the complex \[\Z[\cV(\cG)\stackrel{d_*d^*}\longto \Z[\cV(\cG)]\stackrel{\deg}\longto\Z,\]
%where $\deg$ is the degree map
Let $\Z[\cV(\cG)]_0$ be the image of $d_*$ in $\Z[\cV(\cG)]$.
By \cite[\S9.6,\,Theorem\,1]{NeronModel}, we have a canonical isomorphism
\beq\label{E:components}\Z[\cE(\cG)]/\Im d^*\stackrel{d_*}\isoto \Z[\cV(\cG)]_0/\Im d_*d^*\iso \Phi_{M_{\cU'}}\eeq
such that the following diagram commutes:
\beq\label{E:comutativediagram}\begin{CD}\Div^0 M_{\cU'}(\cK_\ell)@>cl>>J(M_{\cU'})(K_\ell)\\
@VVr_\cV V  @VVr_\ell V\\
\Z[\cV(\cG)]_0/\Im d_*d^* @>\sim>> \Phi_{M_{\cU'}},
\end{CD}\eeq
where $r_\cV:\Div^0 M_{\cU'}(\cK_\ell)\to \Z[\cV(\cG)]_0$ is the specialization map of divisors defined in \cite[\S1.6.6]{Nekovar:Level_raising}. We briefly recall the definition of $r_\cV$ as follows. For each $D\in\Div^0 M_{\cU'}(\cK_\ell)$, extending $D$ to a Cartier divisor $\wtd D$ on $\cM_{\cU'}\ot_{\Z_\ell}\cO_{\cK_\ell}$ by taking closure, define
\beq\label{E:spDivisors}r_\cV(D)=\sum_{C\in \cV(\cG)}(\wtd D\cdot C)C\in\Z[\cV(\cG)]_0,\eeq
where $(\wtd D\cdot C)$ is the intersection number in $\cM_{\cU'}\ot_{\Z_\ell}\cO_{\cK_\ell}$.

\subsection{}We review a description of $\Phi_{M_{\cU'}}$ in terms of spaces of weight two modular forms (\cite[\S5.5]{Bertolini_Darmon:IMC_anti}). Let $\al,\beta:\XB(\cU_0(\ell))\to\XB(\cU)$ be the standard degeneracy maps given by
\[x=[b]_{\cU_0(\ell)}\mapsto \al(x)=[b]_{\cU},\,\beta(x)=[b\pi_\ell^{-1}]_{\cU}.\]
According to \eqref{E:vertices} and \eqref{E:edges}, the morphisms $d_*$ and $d^*$ in \eqref{E:d*} are respectively identified with
\[\begin{CD}\SB(\cU_0(\ell),\Z)@>\delta_*=(-\al_*,\beta_*)>>\SB(\cU,\Z)^{\oplus 2},\quad \SB(\cU,\Z)^{\oplus 2}@>\delta^*=-\al^*+\beta^*>> \SB(\cU_0(\ell),\Z).\end{CD}\]
%Here $(\SB(\cU,\Z)^{\oplus 2})_0$ is the submodule of degree zero divisors in $\SB(\cU,\Z)^{\oplus 2}=\Z[\XB(\cU)\amalg \XB(\cU)]$.
Let $(\SB(\cU,\Z)^{\oplus 2})_0:=\Im\delta_*$ be the image of $\SB(\cU_0(\ell),\Z)$ via $\delta_*$.
%Note that $$\mathrm{Im}\, \delta_*=\mathrm{Ker}(\SB(\cU,\Z)^{\oplus 2}\to
%\mathrm{Ker}(\alpha^*-\beta^*))$$ and we have $\mathrm{Ker}(\alpha^*-\beta^*)=
%\{(\xi_\ell \cdot f,f)\, | \, f\in \SB(\cU,\Z)_{\mathrm{triv}}\}$
%by Nekovar \cite[Proposition 1.2.3 (2)]{Nekovar:Level_raising},
%where $\xi_\ell =\pMX{1}{0}{0}{\ell}$ and
%$$\SB(\cU,\Z)_{\mathrm{triv}}=\{ f\in \SB(\cU,\Z)\, |\, \textup{$f$ factors through }\mathrm{N}:B^\times \backslash \widehat{B}^\times \slash \cU \to \Q^{\times}_+\backslash \widehat{\Q}^\times \slash \mathrm{N}(\cU) \}$$
%is the Eisenstein part.
%Therefore the cokernel of $\delta_*$ is Eisenstein.
%Note that $-\delta^*$ is the adjoint map of $\delta_*$ with respect to the pairing $u(\cdot,\cdot)$.
A direct computation shows that
\[\delta_*\delta^*=(-\al_*,\beta_*)(\al^*-\beta^*)=\pMX{-\ell-1}{T_\ell}{T_\ell}{-\ell-1}\in M_2(\End(\SB(\cU,\Z))).\]
%Consider the morphisms
%\[\delta_*=(-\al_*,\beta_*):\SB(\cU_0(\ell),\Z)\to \SB(\cU,\Z)^{\oplus 2},\quad \delta^*=\al^*-\beta^*:\SB(\cU,\Z)^{\oplus 2}\to \SB(\cU_0(\ell),\Z).\]
Define a ring homomorphism $\Hecke{B}{\ell N^+}{p^n}\to\End (\SB(\cU,\Z)^{\oplus 2})$ by
\begin{align*}
t\to&\wtd t:(x,y)\mapsto (tx,ty)\text{ if }t\in\bbT_B^\setl(\ell N^+,p^n)=\bbT_B^\setl(N^+,p^n);\\
U_\ell\to &\wtd U_\ell:(x,y)\mapsto (-\ell y,x+T_\ell y).
\end{align*}
This makes $\SB(\cU,\Z)^{\oplus 2}$ a $\Hecke{B}{\ell N^+}{p^n}$-module. Moreover,
%Define $\wtd U_\ell\in\End(\SB(\cU,\Z)^{\oplus 2})$ by
%\[\wtd U_\ell(x,y)=(-\ell y,x+T_\ell y).\]
one can check that $\delta_*$ is indeed a $\Hecke{B}{\ell N^+}{p^n}$-module homomorphism.
%\[\wtd U_\ell\circ \delta_*=\delta_*\circ U_\ell.\]

\begin{prop}[Proposition\,5.13,\,\cite{Bertolini_Darmon:IMC_anti}]\label{P:1}We have an isomorphism as $\Hecke{B}{\ell N^+}{p^n}$-modules
\[(\SB(\cU,\Z)^{\oplus 2})_0/(\wtd U_\ell^2-1)\SB(\cU,\Z)^{\oplus 2}\iso \Phi_{M_{\cU'}}.\]\end{prop}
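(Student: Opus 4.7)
My plan is to deduce the isomorphism by combining the Cerednik-Drinfeld description \eqref{E:components} of $\Phi_{M_{\cU'}}$ with a short matrix calculation that identifies two natural submodules of $\SB(\cU,\Z)^{\oplus 2}$.

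First, I would translate \eqref{E:components} through the identifications \eqref{E:vertices}-\eqref{E:edges} into the canonical Hecke-equivariant isomorphism
\[\Phi_{M_{\cU'}} \iso (\SB(\cU,\Z)^{\oplus 2})_0/\Im(\delta_*\delta^*),\]
noting that $\Im(\delta_*\delta^*)$ automatically lies in the degree-zero part. Given the explicit matrix
\[\delta_*\delta^* = \begin{pmatrix} -\ell-1 & T_\ell \\ T_\ell & -\ell-1 \end{pmatrix}\]
recorded just before the proposition, the task reduces to showing that $\Im(\delta_*\delta^*)$ coincides with $(\wtd U_\ell^2 - 1)\SB(\cU,\Z)^{\oplus 2}$ as submodules of $\SB(\cU,\Z)^{\oplus 2}$.

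Next, a direct computation from the defining formula $\wtd U_\ell(x,y) = (-\ell y,\, x+T_\ell y)$ gives
\[\wtd U_\ell^2 - 1 = \begin{pmatrix} -\ell-1 & -\ell T_\ell \\ T_\ell & T_\ell^2-\ell-1 \end{pmatrix}.\]
Writing $A$ and $B$ for the matrices of $\delta_*\delta^*$ and $\wtd U_\ell^2 - 1$ respectively, one checks by straightforward multiplication that
\[B(x,y) = A(x+T_\ell y,\, y) \quad \text{and} \quad A(x,y) = B(x-T_\ell y,\, y).\]
These identities immediately yield $\Im A = \Im B$; in particular $\Im B \subseteq (\SB(\cU,\Z)^{\oplus 2})_0$, and the two quotients coincide.

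Finally, I would verify Hecke-equivariance. For operators in $\bbT_B^\setl(\ell N^+,p^n)$ this is built into the definitions, since $\delta_*$ and $\delta^*$ are Hecke-equivariant by Picard functoriality applied to the degeneracy maps $\alpha,\beta$. The main obstacle, I expect, is the $U_\ell$-equivariance: one must check that the correspondence $[\cU'\pi'_\ell\cU']$ on $M_{\cU'}$, transported via $\vp_*$ and the reduction map to $\Phi_{M_{\cU'}}$, matches the ad hoc operator $\wtd U_\ell$ through the isomorphism $d_*$ of \eqref{E:components}. This requires unraveling how $[\cU'\pi'_\ell\cU']$ moves edges and vertices of the dual graph $\cG$ via the source-target maps $s,t$ on the Cerednik-Drinfeld special fiber, which is essentially an Atkin-Lehner style calculation on $\wh\cM_{\cU'}$; this is the geometric content of the proposition and the only part not reducible to the bookkeeping above.
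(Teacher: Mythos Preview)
Your proposal is correct and follows essentially the same approach as the paper: the identity $B(x,y)=A(x+T_\ell y,\,y)$ is exactly the paper's factorization $\wtd U_\ell^2-1=\delta_*\delta^*\circ\tau$ with $\tau(x,y)=(x+T_\ell y,y)$, and the inverse identity amounts to $\tau$ being an automorphism. The paper likewise defers the $U_\ell$-compatibility to the standard references on the Hecke action on the dual graph (Ribet, Bertolini--Darmon, Nekov\'a\v{r}), so your treatment of that point is in line with theirs.
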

\begin{proof}A direct computation shows that
\[\wtd U_\ell^2-1=\delta_*\delta^*\circ \tau;\quad \tau(x,y)=(x+T_\ell y,y).\]
Since $\tau$ is an automorphism of $\SB(\cU,\Z)^{\oplus 2}$, we can deduce the proposition from the identification between $\Z[\cE(\cG)]$ and $\SB(\cU_0(\ell),\Z)$ as $\Hecke{B}{\ell N^+}{p^n}$-modules combined with the canonical isomorphism \eqref{E:components} and the compatibility of Hecke actions \cite[pp.463--464]{Ribet1990:SerreEpsilonConjecture}(\cf\cite[Proposition\,5.8]{Bertolini_Darmon:IMC_anti} and \cite[\S1.6.7]{Nekovar:Level_raising}).
\end{proof}

\subsection{CM points in Shimura curves}
Take a point $z'$ in $\C\setminus\R$ fixed by $i_\infty(\cK^\x)\subset \GL_2(\R)$. The set of CM points by $\cK$ unramified at $\ell$ on the curve $M_{\cU'}$ is defined as
\[\CM^{\ell-ur}_\cK(M_{\cU'}):=\stt{[z',b']_{\cU'}\mid b'\in\wh B^{\prime\x},\,b'_\ell=1}\subset M_{\cU'}(\cK^{ab}).\]
 Let $\rec_\cK:\wh \cK^\x\to \Gal(\cK^{ab}/\cK)$ be the geometrically normalized reciprocity law. Then Shimura's reciprocity law says
\beq\label{E:ShimuraRec}\rec_\cK(a)[z',b']_{\cU'}=[z',t'(a)b']_{\cU'}.\eeq
This implies
\[\iota_\ell:\CM^{\ell-ur}_\cK(M_{\cU'})\hookto M_{\cU'}(\cK_\ell).\]
%Let $\CM_\cK(\XB(\cU)):=\cK^\x\bksl\Bhat^\x/\cU$. We define
%\[\CM_\cK(\XB(\cU))\to\CM^{\ell-ur}_\cK(M_{\cU'}),\quad \cK^\x b \cU=\cK b^\setl\cU \mapsto [z',\vp_{B,B'}(b^\setl)]_{\cU'}. \]
Let $\CM^{\ell-ur}_\cK(M_{\cU'})^0$ be the subgroup of $\Div^0 M_{\cU'}(\cK_\ell)$ generated by the degree zero divisors supported in $\CM_\cK^{\ell-ur}(M_{\cU'})$. Then the specialization map $r_\cV:\CM^{\ell-ur}_\cK(M_{\cU'})^0\to \Z[\cV(\cG)]_0$ is given by
%The specialization map $M_{\cU'}(\Qbar)\stackrel{\iota_\ell}\longto M_{\cU'}(\C_\ell)\stackrel{\red_\ell}\longto \cV(\cG)\cup \cE(\cG)$ induces a map
\beq\label{E:specialization}r_\cV(\sum_i n_i\cdot [z',b_i']_{\cU'})=\sum_i n_i\cdot [\vp_{B,B'}^{-1}(b_i')]_\cU.\eeq
%:\CM_\cK^{\ell-ur}(M_{\cU'})\to \XB(\cU)\xx\stt{0}\hookto \Z[\cV(\cG)],\quad [z',b']_{\cU'}\mapsto [\vp_{B,B'}^{-1}(b')]_{\cU}.\eeq

\section{Construction of the Euler system}\label{S:EulerSystem}
\subsection{The set-up}
Let $f\in S_k(\Gamma_0(N))$ be an elliptic new form of level $N$ with $q$-expansion at the infinity cusp
\[f(q)=\sum_{n>0}\bfa_n(f)q^n.\]
Let $\Q(f)$ be the Hecke field of $f$, \ie the finite extension of $\Q$ generated by $\stt{\bfa_n(f)}_{n}$. Let $\cO$ be a finite extension of $\Zp$ containing the ring of integers of $\Q(f)$. Then it is well-known that $\bfa_\pme(f)$ belongs to $\cO$. We set
\[\al_p(f)=\text{ the \padic unit root of $X^2-\bfa_p(f)X+p^{k-1}$ in $\Cp$},\quad \al_\pme(f):=\bfa_\pme(f)\pme^\frac{2-k}{2}\text{ if }\pme\not =p.\]
We define an $\cO$-algebra homomorphism
\[\lam_f:\bbT_B(N^+,p)_\cO=\bbT_B(N^+,p)\ot_\Z\cO\to \cO\] by $\lam_f(T_\pme)=\al_\pme(f),\,\lam_f(S_\pme)=1$ if $\pme\ndivides pN$ and $\lam_f(U_\pme)=\al_\pme(f)$ if $\pme\divides pN$, $\lam_f(\Dmd{a})=a^\frac{k-2}{2}$ for $a\in\Zp^\x$.

%Consider the Hecke algebra $\bbT_B(\wh R^\x)$, which is the subring of $\cH(B^\x,\wh R^\x)$ generated by
%the Hecke operators $\stt{T_\pme,S_\pme,S_\pme^{-1}\mid \pme\ndivides N^+\Delta_B}$ and $\stt{U_\pme\mid \pme\divides N^+\Delta_B}$.

%We consider the ideal $I^\setp$ in $\bbT(\wh R^\x)_\cO=\bbT(\wh R^\x)\ot_\Z\cO$ generated by
%\[\stt{T_\pme-\al_\pme(f),S_\pme-1\mid \pme\ndivides pN},\quad \stt{U_\pme-\al_\pme(f)\mid \pme\divides N}.\]
% Consider the subspace of definite quaternionic forms
%\[\SBk(\wh R^\x,\cO)[I^\setp]:=\stt{h\in \SBk(\wh R^\x,\cO)\mid I^\setp\cdot h=0}.\]
%Since $N$ satisfies \eqref{St}, by Jacquet-Langlands correspondence and the theory of new forms we conclude that
%\[\rank_\cO \SBk(\wh R^\x,\cO)[I^\setp]=1.\]

%Let $\wh f_{\pi'}$ be an $\cO$-generator of $\SBk(\wh R^\x,\cO)$ and left $\wh f_{\pi'}^\dagger\in \SBk(\wh R_{pN^+}^\x,\cO)$ be the $p$-stabilization of $\wh f_{\pi'}$.
%Then $\wh f_{\pi'}^\dagger$ is the $p$-normalized and $p$-stabilized \padic new form defined in \cite[\S4.1]{Hsieh_Chida}. By the choice of $\wh f_{\pi'}^\dagger$, we have
%\begin{itemize}
%\item $\wh f_{\pi'}^\dagger\not\con 0\pmod{\uf}$.
%\item $(I_f^\setp,U_p-\al_p(f))\cdot \wh f_{\pi'}^\dagger=0$
%\end{itemize}
%Define a function $\fn:B^\x\bksl \wh B^\x /\wh R^{\setp \x}\to\cOn$ by
%\[\fn(g):=\pair{X^{k-2}}{\wh f_{\pi'}^\dagger(g)}_k\pmod{\uf^n}.\]

\subsection{Level raising}Let $n$ be a positive integer and let $\cOn=\cO/(\uf^n)$.
Recall that we have introduced the notion of $n$-admissible primes for $f$ in \defref{D:admissibleprimes}.
%\begin{enumerate} \item $\ell$ does not divide $pN$,
%\item $\ell$ is inert in $K\slash \Q$,
%\item $p$ does not divide $\ell^2-1$,
%\item $\varpi^n$ divides $\ell^{\frac{k}{2}}+\ell^{\frac{k-2}{2}}-\epsilon_\ell \bfa_{\ell}(f)$ with $\epsilon_\ell \in \{ \pm 1 \}$.
%\end{enumerate}
\begin{defn}\label{D:1}
An \emph{$n$-admissible form} $\cD=(\Delta,\fn)$ is a pair consisting of a square-free integer $\Delta$ of an odd number of prime factors and a \padic quaternionic eigenform $\fn\in \SB(\cU_{N^+,p^n},\cOn)$ for the definite quaternion algebra $B$ over $\Q$ of discriminant $\Delta$ such that the following conditions hold:
\begin{mylist}
\item $N^-\mid \Delta$ and every prime factor of $\Delta/N^-$ is $n$-admissible.
\item $\fn\pmod{\uf}\not\con 0$,
\item $\fn$ is a $\Hecke{B}{N^+}{p^n}$-eigenform, and $\lam_{\fn}\con \lam_f\pmod{\uf^n}$, where $\lam_{\fn}:\Hecke{B}{N^+}{p^n}_\cO=\Hecke{B}{N^+}{p^n}\ot_\Z\cO\surjto\cOn$ be the $\cO$-algebra homomorphism induced by $\fn$. Namely, we have the following equalities in $\cO_n$
\[\lam_{\fn}(T_\pme)=\al_\pme(f) \text{ for }\pme\ndivides pN^+\Delta,\quad \lam_{\fn}(U_\pme)=\al_\pme(f)\text{ for }\pme\divides pN,\quad \lam_{\fn}(\Dmd{a})=a^\frac{k-2}{2}\text{ for }a\in\Zp^\x.\]
\end{mylist}
\end{defn}
We fix an $n$-admissible form $\cD=(\Delta,\fn)$ and an $n$-admissible prime $\ell\ndivides \Delta$ with $\ep_\ell\cdot \al_\ell\con \ell+1\pmod{\uf^n}$, where $\ep_{\ell}$ is the sign as in (4) of \defref{D:admissibleprimes}. Let $B$ be the definite quaternion algebra over $\Q$ of discriminant $\Delta$. Write $\cU=\cU_{N^+,p^n}\subset \Bhat^\x$ for the open compact subgroup defined in \eqref{E:opncmpt}. Let $\bbT=\TBU_\cO$ and $\bbT^\Setl=\Hecke{B}{\ell N^+}{p^n}_\cO$. We extend $\lam_{\fn}$ to an $\cO$-algebra homomorphism $\lam_{\fn}^\Setl:\bbT^\Setl\to\cOn$ by defining $\lam_{\fn}^\Setl(U_\ell):=\ep_{\ell}$. Let $\cIfn$ (resp. $\cIll$) be the kernel of $\lam_{\fn}:\bbT\to\cOn$ (resp. $\lam_{\fn}^\Setl:\bbT^\Setl\to\cOn$). The eigenform $\fn$ gives rise to a surjective $\cO$-module map
\[\psi_{\fn}:\SB(\cU,\cO)/\cIfn\surjto \cOn,\,h\mapsto \psi_{\fn}(h):=\pair{\fn}{h}_\cU.\]

%generated by \[\stt{T_\pme-\al_\pme(f),S_\pme-1\mid \pme\ndivides \ell p N},\,\stt{U_\pme-\al_\pme(f)\divides \pme\divides pN}\]
 %Let $\cIll$ be the ideal of $\bbT_B^\setl(\cU)$ generated by $(\uf^n,\Ill,P_k)$.
%et $I_f$ be the ideal of $\TBU$ generated by $\stt{I_f^\setl, T_\ell-\al_\ell,S_\ell-1}$ and let $\cI_f$ be the ideal of $\TBU$ generated by $(\uf^n,I_f,P_k)$.

%\begin{lm} The function $\fn$ belongs to $\SB(\cU_{N^+,p^n},\cOn)$ such that
%\begin{itemize}\item[(a)] $\fn\in \SB(\cU,\cOn)[P_k,I_f]$.
%\item[(b)] $\fn\pmod{\uf}\not \con 0$.
%\end{itemize} \end{lm}
%\begin{proof}This is a direct computation.\end{proof}

\begin{prop}\label{P:2}Assume that $\textup{($\mathrm{CR}^+$)}$ and \eqref{PO} hold. Then we have an isomorphism \[\psi_{\fn}:\SB(\cU,\cO)/\cIfn\isoto\cOn.\]
\end{prop}
\begin{proof} Let $P_k\subset \cIfn$ be the ideal of $\bbT$ generated by $\stt{\Dmd{a}-a^\frac{k-2}{2}\mid a\in\Zp^\x}$. Let $e=\lim_{n\to\infty} U_p^{n!}$ be Hida's ordinary projector on the space of \padic modular forms on $B$. Let $R_{N^+}$ and $R_{pN^+}$ be the Eichler orders of level $N^+$ and $pN^+$ in $B$. Let $\cV=\wh R_{N^+}^\x$ and $\cV_0(p)=\wh R^\x_{pN^+}$. By Hida theory for definite quaternion algebra (the case $q=0$ in \cite[Corollary\,8.2 and \,Proposition\,8.3]{Hida:p-adic-Hecke-algebra}), we have
\[e.\SB(\Un,\cOn)[P_k]=e.\SBk(\cV_0(p),\cOn).\]
Taking \pont dual, we find that
\[e.\SB(\Un,\cO)/(\uf^n,P_k)=e.\SBk(\cV_0(p),\cO)/(\uf^n).\]
Let $e^\circ=\lim_{n\to\infty} T_p^{n!}$ be the ordinary projector on $\SBk(\cV,\cO)$. Let $\frakm$ be the maximal ideal of $\bbT$ containing $\cI_{\fn}$.  Then the $p$-stabilization map gives rise to an isomorphism
\beq\label{E:pstablization}e^\circ.\SBk(\cV,\cO)_\frakm\isoto e.\SBk(\cV_0(p),\cO)_\frakm\eeq
under the assumption \eqref{PO}, and induces a surjective map $\bbT\to e^\circ.\bbT_B(\cV)_\cO$, which takes $U_p\mapsto u_p$, where $u_p$ is the unique unit root solution of $X^2-T_pX+p^{k-1}$ in $e^\circ.\bbT_B(\cV)$. Since $U_p-\al_p(f)\in \frakm$ with $\al_p(f)\in\cO^\x$, we find that \[\SB(\Un,\cO)_{\frakm}/(\uf^n,P_k)=\SBk(\cV_0(p),\cO)_{\frakm}/(\uf^n)\iso \SBk(\cV,\cO)_{\frakm}/(\uf^n).\]
By \cite[Proposition\,6.8]{Hsieh_Chida}, $\SBk(\cV,\cO)_{\frakm}$ is a cyclic $\TBU_{\frakm}$-module, and hence $\SB(\Un,\cO)/\cIfn$ is generated by some modular form $h$ as a $\TBU$-module. Since $\psi_{\fn}$ is surjective and Hecke operators in $\bbT$ are self-adjoint with respect to $\pairing_\cU$, it follows that $\psi_{\fn}(h)=\pair{\fn}{h}_\cU\in\cOn^\x$ and the annihilator of $h$ in $\bbT$ is $\cIfn$. Therefore, \[\SB(\cU,\cO)/\cIfn\iso \bbT/\cIfn=\cOn.\]
This completes the proof.
\end{proof}

 Let $B'$ be the indefinite quaternion algebra of discriminant $\Delta\ell$ and let $\ShCl=M_{\cU'}$ be the Shimura curve attached to $B'$ of level $\cU'$ introduced in \subsecref{SS:ShimuraCurve}. Let $\Jacl=J(\ShCl)$ be the Jacobian of $\ShCl$ and let $\Phi^\Setl$ be the group of connected components of the special fibre of the \Neron model of $\Jacl$ over $\cO_{\cK_\ell}$.

\begin{thm}\label{T:multiplicityOne}Let $\Phi^\Setl_\cO=\Phi^\Setl\ot_\Z\cO$. We have an isomorphism
\[\Phi^\Setl_\cO/\cIll\iso \SB(\cU,\cO)/\cIfn\stackrel{\psi_{\fn}}\isoto \cOn.\]
\end{thm}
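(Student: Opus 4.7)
The plan is to identify $\Phi^{[\ell]}_\cO/\cIll$ with $\SB(\cU, \cO)/\cIfn \iso \cOn$ via an explicit Hecke-equivariant projection, and then to invoke \propref{P:2}.

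First I invoke \propref{P:1} to realize $\Phi^{[\ell]}_\cO$ as $V/W$ with $V := (\SB(\cU, \cO)^{\oplus 2})_0$ and $W := (\wtd U_\ell^2 - 1)\SB(\cU, \cO)^{\oplus 2}$. I then consider the $\cO$-linear map
\[\pi \colon \SB(\cU, \cO)^{\oplus 2} \longrightarrow \SB(\cU, \cO)/\cIfn, \qquad (x, y) \longmapsto x + \epsilon_\ell y \pmod{\cIfn},\]
with the target viewed as a $\bbT^{[\ell]}$-module via $\lambda_{\fn}^{[\ell]}$, so that $U_\ell$ acts by $\epsilon_\ell$. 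The key verification is $\bbT^{[\ell]}$-equivariance of $\pi$: away from $\ell$ this is diagonal and immediate, while for $U_\ell$ a short computation using $\wtd U_\ell(x,y) = (-\ell y, x + T_\ell y)$, $T_\ell \equiv \al_\ell(f) \pmod{\cIfn}$, and the admissibility congruence $\epsilon_\ell\al_\ell(f) \equiv \ell + 1 \pmod{\uf^n}$ yields $\pi(\wtd U_\ell(x,y)) \equiv \epsilon_\ell(x + \epsilon_\ell y) \pmod{\cIfn}$. Consequently $\pi$ annihilates $\cIll\cdot \SB(\cU, \cO)^{\oplus 2}$, and since $U_\ell^2 - 1 \in \cIll$, it also kills $W$; combined with \propref{P:2} this descends to $\bar\pi \colon \Phi^{[\ell]}_\cO/\cIll \longrightarrow \cOn$.

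To prove that $\bar\pi$ is an isomorphism, I will use the short exact sequence of $\bbT^{[\ell]}$-modules
\[0 \longrightarrow V \longrightarrow \SB(\cU, \cO)^{\oplus 2} \xrightarrow{\,\deg\,} \cO \longrightarrow 0,\]
where $\cO$ carries the Eisenstein action ($T_\pme \mapsto \pme+1$, $U_\ell \mapsto 1$). Absolute irreducibility of $\bar\rho_f$ (Hypothesis~\CR) implies that $f$ is not congruent to an Eisenstein series modulo $p$, so the maximal ideals $\cIll$ and $I_\Eis := \ker\lambda_\Eis$ of $\bbT^{[\ell]}_\cO$ are coprime. Hence $\cO/\cIll\cO = 0$ and, by Chinese remainder, $\cIll \cap I_\Eis = \cIll \cdot I_\Eis$. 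The crucial lemma to establish is
\[\cIll\cdot \SB(\cU, \cO)^{\oplus 2} \,\cap\, V \;=\; \cIll \cdot V,\]
which I plan to prove by splitting $\SB(\cU, \cO)^{\oplus 2} = V \oplus \cO\cdot(\chi_0, 0)$ for a fixed degree-one element $\chi_0 \in \SB(\cU, \cO)$ and rewriting the non-$V$ summand of an element of the intersection as an element of $\cIll V$ via the CRT identity. A separate matrix computation on $\cOn^{\oplus 2}$ (the cokernel of $\wtd U_\ell - \epsilon_\ell$ is cyclic, generated by $(-\epsilon_\ell, 1) \in \ker\pi$) shows that $\pi$ realizes $\SB(\cU, \cO)^{\oplus 2}/\cIll\SB(\cU, \cO)^{\oplus 2} \iso \cOn$. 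Combining these, $W \subset \cIll V$, so $\Phi^{[\ell]}_\cO/\cIll = V/\cIll V$ and $\bar\pi$ factors as $V/\cIll V \hookrightarrow \SB(\cU, \cO)^{\oplus 2}/\cIll\SB(\cU, \cO)^{\oplus 2} \iso \cOn$; this is also surjective by right-exactness and the vanishing of $\cO/\cIll\cO$, so $\bar\pi$ is an isomorphism.

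I expect the main obstacle to be the lemma $\cIll\SB(\cU, \cO)^{\oplus 2} \cap V = \cIll V$, whose proof hinges on the CRT identity $\cIll \cap I_\Eis = \cIll\cdot I_\Eis$ and thus on the non-Eisenstein property of $\bar\rho_f$ encoded in Hypothesis~\CR. Without this input, the degree-zero restriction could shrink the $\cIll$-coinvariants strictly below $\cOn$, in parallel with the role of the non-Eisenstein hypothesis in the weight-two argument of Bertolini and Darmon.
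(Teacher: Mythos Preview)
Your argument is correct and reaches the same conclusion, but the paper's route is shorter. Instead of constructing the explicit projection $\pi(x,y)=x+\ep_\ell y$ and proving the lemma $\cIll\cdot\SB(\cU,\cO)^{\oplus 2}\cap V=\cIll\cdot V$ via Chinese remainder, the paper simply localizes everything at the non-Eisenstein maximal ideal $\frakml\supset\cIll$. Localization annihilates the Eisenstein quotient in $0\to V\to\SB(\cU,\cO)^{\oplus 2}\to\cO\to 0$, so the degree-zero condition disappears and your key lemma becomes vacuous; likewise $\wtd U_\ell+\ep_\ell\equiv 2\ep_\ell\pmod{\frakml}$ becomes a unit after localization, so $(\wtd U_\ell^2-1)$ and $(\wtd U_\ell-\ep_\ell)$ cut out the same quotient. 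The embedding $x\mapsto(0,x)$ then identifies $\cS^{\oplus 2}/(\wtd U_\ell-\ep_\ell)$ with $\cS/(\ep_\ell T_\ell-\ell-1)$ (where $\cS=\SB(\cU,\cO)_{\frakml}$), and \propref{P:2} finishes. Your coprimality $\cIll+I_{\Eis}=(1)$ is exactly the non-Eisenstein input that makes the paper's localization work, just unpacked by hand; you gain an explicit isomorphism, the paper gains brevity. Two small points: $\cIll$ is not itself maximal (its quotient is $\cOn$), though coprimality still follows by choosing a prime $q$ with $\al_q(f)\not\equiv q+1\pmod\uf$; and in your matrix step you mean the \emph{image} (not cokernel) of $\wtd U_\ell-\ep_\ell$ is spanned by $(-\ep_\ell,1)$, and to reduce to $\cOn^{\oplus 2}$ you need $T_\ell\equiv\al_\ell(f)$, which is cleanest to obtain as a \emph{consequence} of $\wtd U_\ell\equiv\ep_\ell$ via the Cayley--Hamilton relation $\wtd U_\ell^2-T_\ell\wtd U_\ell+\ell=0$ rather than assumed in advance.
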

\begin{proof}Let $\frakml$ be the maximal ideal of $\bbT^\Setl$ containing $\cIll$. Let $\cS=\SB(\cU,\cO)_{\frakml}$. The embedding $\cS\hookto \cS^{\oplus 2},\,x\mapsto (0,x)$ induces an isomorphism
\[\cS/(\ep_\ell T_\ell-\ell-1)\isoto\cS^{\oplus 2}/(\wtd U_\ell-\ep_\ell).\]
It is shown in \cite[Proposition 1.5.9 (1)]{Nekovar:Level_raising} that the quotient $\SB(\cU,\cO)^{\oplus 2}/(\SB(\cU,\cO)^{\oplus 2})_0$ is Eisenstein, so we find that $\cS^{\oplus 2}=((\SB(\cU,\cO)^{\oplus 2})_0)_{\frakml}$ (the ideal $\frakml$ is not Eisenstein). By \propref{P:1} we have a $\Hecke{B}{\ell N^+}{p^n}$-module isomorphism
\[( \Phi^\Setl_\cO)_{\frakml}\iso\cS^{\oplus 2}/(\wtd U_\ell^2-1)\iso\cS^{\oplus 2}/(\wtd U_\ell-\ep_\ell)\isoto\cS/(\ep_\ell T_\ell-\ell-1).\]
In particular, we see that $U_\ell$ acts on $\Phi^\Setl_\cO$ by $\ep_\ell$. Combined with \propref{P:2}, the theorem follows.
\end{proof}

%Let $\cIll$ be the ideal generated by
%\[\Ill\,\text{ and }\Dmd{a}-a^\frac{k-2}{2}\,(a\in\Zp^\x).\]

Denote by $T_p(\Jacl)=\prolim_{m}\Jacl[p^m](\Qbar)$ the \padic Tate module of $\Jacl$.
\begin{cor}\label{C:levelraising}We have an isomorphism as $G_\Q$-modules
\[T_p(\Jacl)_\cO/\cIll\iso \Tfn.\]
\end{cor}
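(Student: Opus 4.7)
The plan is to use the $\ell$-adic uniformization of $\ShCl$ set up in \secref{S:ShimuraCurve}, Eichler--Shimura congruence relations, and the multiplicity-one input \propref{P:2} to identify $T_p(\Jacl)_\cO/\cIll$ with $\Tfn$ as $G_\Q$-modules.

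First I would pin down the $\cOn$-module structure of $T_p(\Jacl)_\cO/\cIll$. By \v{C}erednik--Drinfeld, $\Jacl$ has purely toric reduction at $\ell$, and Grothendieck's monodromy filtration supplies a $G_{\Q_\ell}$-equivariant exact sequence
\[ 0 \to \Hom(X,\Z_p)(1) \to T_p(\Jacl) \to X\ot_\Z\Z_p \to 0, \]
where $X$ is the character group of the toric part, an unramified Galois module controlled by the dual graph $\cG$. The description of $\cG$ in \subsecref{SS:badreduction}, together with \eqref{E:edges} and the computations in the proofs of \propref{P:1} and \thmref{T:multiplicityOne}, realizes $(X\ot\cO)/\cIll$ as a quotient of $\SB(\cU(\ell),\cO)/\cIll$ that is isomorphic to $\cOn$ via \propref{P:2}, and an analogous argument on the other side gives $(\Hom(X,\Z_p)\ot\cO)/\cIll\iso\cOn$. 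A size count plus Nakayama's lemma, using the two generators produced by the graded pieces of the monodromy filtration, then yields that $T_p(\Jacl)_\cO/\cIll$ is $\cOn$-free of rank two.

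Next, Eichler--Shimura on $\ShCl$ says that for each prime $q\nmid p\ell N^+\Delta$, the characteristic polynomial of $\Frob_q$ on $T_p(\Jacl)$ equals $X^2-\vp_*(T_q)X+q\vp_*(S_q)$. Reducing modulo $\cIll$ and using $\lam_{\fn}^{\Setl}(T_q)=\al_q(f)$, $\lam_{\fn}^{\Setl}(S_q)=1$, this becomes $X^2-\al_q(f)X+q\pmod{\uf^n}$, which is precisely the characteristic polynomial of $\rho_f^*(\Frob_q)=(\rho_f\ot\e^{(2-k)/2})(\Frob_q)$ (whose trace is $\bfa_q(f)q^{(2-k)/2}=\al_q(f)$ and whose determinant is $q^{k-1}\cdot q^{2-k}=q$). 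Chebotarev density forces the semisimplifications of $T_p(\Jacl)_\cO/\cIll$ and $\Tfn$ to coincide, and the residual absolute irreducibility of $\bar\rho_f$ from \CR\,(ii) upgrades this to an honest $G_\Q$-equivariant isomorphism.

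The main obstacle is the freeness of rank two over $\cOn$. Classical multiplicity-one techniques \`{a} la Mazur--Ribet do not apply verbatim here because of the combination of higher weight (encoded through the diamond eigenvalues $\lam_{\fn}(\Dmd a)=a^{(k-2)/2}$) and the wildly ramified level $p^n$; however, this is essentially the content of \propref{P:2}, whose proof rests on the Diamond--Taylor version of Ihara's lemma (via \eqref{PO}) and on the Hida-theoretic freeness of \cite[\proposition 6.8]{Hsieh_Chida} (via \CR). A secondary bookkeeping point worth flagging is the twist: the cyclotomic factor $\e^{(2-k)/2}$ in $\rho_f^*$ must be matched against the geometric weight-two Galois action carried by $T_p(\Jacl)$, which is precisely what the $\Dmd{a}$-action on $\fn$ absorbs.
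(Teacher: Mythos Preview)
Your strategy lines up with the paper's---$\ell$-adic uniformization, Eichler--Shimura, the multiplicity-one input, and residual irreducibility are exactly the ingredients used---and the identification step via Chebotarev and Brauer--Nesbitt is fine once freeness is known.

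The gap is in the freeness step. Even granting that both graded pieces of the monodromy filtration become $\cOn$ modulo $\cIll$, the sequence $0\to\Hom(X,\Z_p)(1)\to T_p(\Jacl)\to X\ot\Z_p\to 0$ is only \emph{right} exact after applying $-\ot\bbT^{\Setl}/\cIll$; the map from the left-hand term may fail to stay injective. What your argument actually yields is $T_p(\Jacl)_\cO/\cIll\iso\cOn\oplus\cO/\uf^r$ for some $r\le n$: the quotient graded piece survives as $\cOn$, but the sub may collapse. The ``size count'' gives only the upper bound $\#(T/\cIll)\le(\#\cOn)^2$, never the lower bound you need. (A secondary point: \propref{P:2} computes $\SB(\cU,\cO)/\cIfn$, not the character-group quotient; the paper feeds in \thmref{T:multiplicityOne} on the \emph{component} group $\Phi^\Setl_\cO/\cIll$ instead, together with the exact sequence \eqref{E:6} from \cite[Lemma~5.16]{Bertolini_Darmon:IMC_anti}, to extract a cyclic $\cO$-submodule of order~$\uf^n$.)

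The paper closes the gap by a \emph{second} use of absolute irreducibility, this time for the module structure rather than the identification. Knowing $T/\frakml\iso T_{f,1}$, one has $\bar\rho_f(\bbF[G_\Q])=\End_\bbF(T/\frakml)$, so there exists $h\in\rho_f(\cO[G_\Q])$ with $he_2=ae_1+be_2$ and $a\in\cO^\x$. Since $\uf^r$ kills $e_2$, applying $h$ shows $\uf^r$ also kills $e_1$, forcing $r=n$. You invoke irreducibility only at the end, to pass from equal semisimplifications to an isomorphism; the paper's point is that it is already needed to get the correct $\cOn$-rank.
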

\begin{proof}Let $T^\Setl:=T_p(\Jacl)_\cO$. The argument in \cite[Theorem\,5.17 and the remarks below]{Bertolini_Darmon:IMC_anti}, based on the $\ell$-adic uniformization of $\Jacl(\Qbar_\ell)$ and the Eichler-Shimura congruence relation (\cf\cite[\S1.6.8]{Nekovar:Level_raising}), yields $T^\Setl/\frakml\iso T_{f,1}$ and the exact sequence
\beq\label{E:6}\Phi^\Setl_\cO/\cIll\to H^1(K_\ell,T^\Setl/\cIll)\to H^1_{\fin}(K_\ell,\cX^{[\ell]}_\cO/\cIll),\eeq
where $\cX^{[\ell]}_\cO=\cX^{[\ell]}\ot_\Z\cO$ and $\cX^{[\ell]}$ is the character group of $\Phi^{[\ell]}$. %the connect component of special fibre $\cJ_s^\circ$ of the \Neron model of $\Jacl$.
In addition, by the proof of \cite[Lemma 5.16]{Bertolini_Darmon:IMC_anti}, \thmref{T:multiplicityOne} implies that $T^\Setl/\cIll$ contains a cyclic $\cO$-submodule of order $\uf^n$. Thus, we find that $T^\Setl/\cIll\iso \cO/(\uf^n)e_1\oplus \cO/(\uf^r)e_2$ with $r\leq n$. Since the residual Galois representation $\ol{\rho}_f$ is absolutely irreducible,
we have an equality
\[\ol{\rho}_f(\bbF[G_\Q])=\End_{\bbF}(T_{f,1})=\End_{\cO}(T^\Setl/\frakml)\quad(\bbF=\cO/(\uf)).\]
In particular, there exists an element $h\in\rho_{f}(\cO[G_\Q])$ such that
\[he_2=ae_1+be_2,\,a\in\cO^\x,\,b\in\cO.\]
This implies that $\uf ^r e_1=0$, and hence $r=n$.
\end{proof}

Let $\wtd r_\ell:\Jacl(\cK_m)\to \Phi^\Setl_\cO\ot\cOn[\Gamma_m]$ be the reduction map
\[\wtd r_\ell(D)=\sum_{\sg\in \Gamma_m}r_\ell(\iota_\ell(\sg(D)))\sg.\]
\begin{thm}\label{T:commutative}
\begin{enumerate}
\item
There is an isomorphism
$$
\psi_{\fn}:\Phi^\Setl_\cO \slash \cIll \isoto H^1_{\sing}(K_\ell, \Tfn)
$$
which is canonical up to the choice of an identification of
$T_p(\Jacl) \slash \cIll$ with $\Tfn$.
\item
There is a commutative diagram
\[
\begin{CD}
\Jacl(\cK_m)\slash \cIll @>>> H^1(K_m, \Tfn)    \\
    @VV{\wtd r_\ell} V                   @VV{\partial_\ell} V  \\
\Phi^\Setl_\cO\slash \cIll\ot_\cO\cOn[\Gamma_m] @>\psi_{\fn}>\sim> H^1_{\sing}(K_{m,\ell}, \Tfn) ,
\end{CD}
\]
where the top horizontal map arises from the natural Kummer map,
%the lower horizontal map is the map given in above,
and $\partial_\ell$ is the residue map.
Moreover, there is a similar commutative diagram
\[
\begin{CD}
\widehat{\Jacl}(K_\infty)\slash \cIll @>>> \widehat{H}^1(K_\infty, \Tfn)    \\
    @VV{\widetilde{r}_\ell} V                   @VV{\partial_\ell} V  \\
\Phi^\Setl_{\cO}\slash \cIll\ot_\cO\cOn\powerseries{\Gamma} @>\psi_{\fn}>\sim> \widehat{H}^1_{\sing}(K_{\infty,\ell}, \Tfn).
\end{CD}
\]
\end{enumerate}
\end{thm}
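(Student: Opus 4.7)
The strategy is to deduce both statements from (a) the multiplicity-one result in Theorem~\ref{T:multiplicityOne}, (b) the level-raising isomorphism $T_p(\Jacl)_\cO/\cIll\iso \Tfn$ of Corollary~\ref{C:levelraising}, and (c) the monodromy exact sequence for the Jacobian $\Jacl$, which has purely toric reduction at $\ell$ because $\ell\mid\Delta_{B'}$ and $\ell$ is inert in $\cK$.

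For part (1), the monodromy filtration on $T_p(\Jacl)$ over $\cK_\ell$ fits into a Mumford/Grothendieck short exact sequence of $G_{\cK_\ell}$-modules involving the character group $\cX_\ell$ and $T_p(\Jacl)$. Taking Galois cohomology and passing to the singular quotient, this produces a natural connecting homomorphism $\Phi^\Setl \to H^1_{\sing}(\cK_\ell,T_p(\Jacl))$. After tensoring with $\cO$ and reducing modulo $\cIll$, Corollary~\ref{C:levelraising} identifies $T_p(\Jacl)_\cO/\cIll$ with $\Tfn$, so I obtain a map
\[
\Phi^\Setl_\cO/\cIll \longrightarrow H^1_{\sing}(\cK_\ell, \Tfn).
\]
By Theorem~\ref{T:multiplicityOne}, the source is a free cyclic $\cOn$-module, and by Lemma~\ref{L:rank-one}(1) so is the target. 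Thus the map is either an isomorphism or zero, and it remains to rule out the latter; this is the standard non-triviality of the level-raising class, which can be checked by computing the image of the specialization of a CM-divisor (as in Theorem \ref{T:commutative}(2) below, this is nonzero already modulo $\uf$). Composing with the inverse of $\psi_{\fn}$ from Theorem~\ref{T:multiplicityOne} yields the desired isomorphism, canonical up to the identification in Corollary~\ref{C:levelraising}.

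For part (2), the commutativity of the diagram is the classical compatibility between the Kummer map and the component-group reduction map for abelian varieties with semistable reduction. Concretely, for $P\in \Jacl(\cK_m)$ the Kummer class $\kappa(P)\in H^1(K_m,T_p(\Jacl))$ has unramified-at-$\ell$ cocycle restriction determined by $r_\ell(\iota_\ell(P))\in \Phi^\Setl$ via the connecting map of the monodromy sequence; this is exactly the construction of Section~\ref{SS:badreduction} composed with the isomorphism in part~(1). The Galois-equivariance under $\Gamma_m$ follows from the definition of $\wtd r_\ell$, which averages over $\Gamma_m$-translates, and from $G_\Q$-equivariance of the Kummer map and of the monodromy sequence. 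The second diagram over $K_\infty$ is obtained by passing to the projective limit over $m$, which is exact on the compact $\Lam$-modules involved (noting that both $\wh\Jacl(K_\infty)$ and $\wh H^1(K_\infty,\Tfn)$ are defined as such limits with respect to corestriction).

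The only genuinely delicate step will be normalizing the isomorphism in part~(1) so that it is compatible with the identification of $T_p(\Jacl)_\cO/\cIll$ with $\Tfn$ used to define residue maps, and with the Hecke-equivariant pairing $\psi_{\fn}$ of Theorem~\ref{T:multiplicityOne}; this requires tracing through the Eichler–Shimura relation and the $U_\ell$-action (shown to act as $\ep_\ell$ on $\Phi^\Setl_\cO/\cIll$ in the proof of Theorem~\ref{T:multiplicityOne}). Beyond this bookkeeping the argument is a direct transcription of the weight-two discussion in \cite[\S5.5--5.6]{Bertolini_Darmon:IMC_anti} and \cite[\S1.6]{Nekovar:Level_raising}, since all the geometric input lives on Shimura curves of weight-two type.
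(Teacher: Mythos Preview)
Your approach is the same as the paper's: the proof there simply reads ``This is a direct generalization of \cite[Corollary 5.18]{Bertolini_Darmon:IMC_anti}. See also \cite[Section 1.7.3]{Nekovar:Level_raising},'' and your sketch is an unpacking of exactly those references, via the monodromy/uniformization exact sequence for the purely toric Jacobian combined with Theorem~\ref{T:multiplicityOne} and Corollary~\ref{C:levelraising}.

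One expository correction, though. Your dichotomy ``the map is either an isomorphism or zero'' is false: an $\cOn$-linear map between two free rank-one $\cOn$-modules can be multiplication by any element of $\cOn$, so it could have image $\uf^k\cOn$ for any $0\le k\le n$. What you actually need (and what your parenthetical ``nonzero already modulo $\uf$'' supplies) is that the map is surjective mod $\uf$, hence surjective by Nakayama, hence an isomorphism by cardinality. More to the point, in the references you cite this is not handled by a non-vanishing computation at all: the $\ell$-adic uniformization identifies $\Phi^\Setl$ with $\cX_\ell/j(\cX_\ell^\vee)$ and the connecting map lands \emph{surjectively} onto $H^1_{\sing}$ by the very structure of the monodromy filtration (cf.\ the exact sequence~\eqref{E:6} already quoted in the proof of Corollary~\ref{C:levelraising}). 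Arguing via a CM-point computation, as you suggest, risks a mild circularity with part~(2) and with Theorem~\ref{T:first}, and is in any case unnecessary.
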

\begin{proof}
This is a direct generalization of \cite[Corollary 5.18]{Bertolini_Darmon:IMC_anti}.
See also \cite[Section 1.7.3]{Nekovar:Level_raising}.
In part (2), the lower horizontal map is deduced from the map $\psi_g$ in part (1) by using the identification $H^1_{\mathrm{sing}}(K_{m,\ell},T_{f,n})\cong H^1_{\mathrm{sing}}(K_{\ell},T_{f,n})\otimes_{\cO}\cO_n[\Gamma_m]$ as in Lemma \ref{L:2}.
\end{proof}

\subsection{Construction of the cohomology class $\kap_\cD(\ell)$}\label{SS:CohomologyClass}
\def\cmptv{\cmpt_\pme}
\def\w{w}
\def\setn{{(m)}}
In this section, we associate a cohomology class $\kappa_\cD(\ell)$ in the Selmer group $\wh\Sel_{\Delta\ell}(K_\infty,\Tfn)$ to an $n$-admissible form $\cD=(\Delta,\fn)$ and an $n$-admissible prime $\ell\ndivides \Delta$.
\subsubsection{}
 Fix a decomposition $N^+\cO_\cK=\frakN^+\ol{\frakN^+}$ once and for all. For each $\pme\divides N^+$, define $\cmptv\in \GL_2(\Q_\pme)$ by:
\beq\label{E:cmptv.W}\begin{aligned}
%\cmptv=&1\text{ if $\pme\ndivides pN^+$,}\\
\cmptv=&\Diff^{-1}\pMX{\CMP}{\ol{\CMP}}{1}{1}\in\GL_2(\cK_\w)=\GL_2(\Qq)\text{ if $\pme=\w\wbar$ is split with $\w|\frakN^+$.}
\end{aligned}\eeq
For each positive integer $m$, we define $\cmpt_p^\setn\in \GL_2(\Qp)$ as follows.
If $p=\frakp\ol{\frakp}$ splits in $\cK$, we put
\begin{align}\label{E:op1.W}\cmpt_p^\setn=&\pMX{\CMP}{-1}{1}{0}\pDII{p^m}{1}\in\GL_2(\cK_\frakp)=\GL_2(\Qp).
\intertext{If $p$ is inert in $\cK$, then we put}
\label{E:op2.B}\cmpt_p^\setn=&\pMX{0}{1}{-1}{0}\pDII{p^m}{1}.\end{align}
We set
\beq\label{E:cmpt}\cmpt^\setn:=\cmpt_p^\setn\prod_{\pme\divides N^+}\cmptv\in \GL_2(\wh\Q_{(pN^+)})\iso\Bhat_{(pN^+)}^\x\hookto\Bhat^\x \eeq
Let $\cR_m=\Z+p^m\cO_\cK$ be the order of $\cK$ of conductor $p^m$.
It is not difficult to verify immediately that
\beq\label{E:optimal}(\cmpt^{(m)})^{-1}\wh\cR_m^\x\cmpt^{(m)}\subset \cU_{N^+,p^n}\text{ if }m\geq n. \eeq
%\[(\cmpt^{(m)})^{-1}p^m\bftheta\cmpt^{(m)}=\pMX{p^m\bftheta}{-1}{0}{p^m\ol{\bftheta}},\quad \pMX{0}{\rmN(\bftheta)}{-p^{2m}}{p^m\rmT(\bftheta)}.\]
We define a map
\[x_m:\Pic\cR_m=\cK^\x\bksl \wh\cK^\x/\wh\cR_m^\x\to \XB(\cU),\quad \cK^\x a\wh \cR_m^\x\mapsto x_m(a)=[a\cmpt^\setn]_\cU.\]
\subsubsection{}\label{SSS:Shimuracurve}
Let $\Un'=\vp_{B,B'}(\Un^\setl)\cO_{B_\ell}^\x\subset \wh B^{\prime\x}$ and let $\ShCl=M_{\Un'}$ be the Shimura curve of level $\Un'$. Let $m$ be a non-negative integer. To each $a\in \wh K^\x$, we associate the Heegner point $P_m(a)$ defined by
\beq\label{E:CMpoint}P_m(a):=[(z',\vp_{B,B'}(a^\setl\cmpt^\setn\tau_n))]_{\Un'}\in \ShCl(\C),\eeq
where $\tau_n\in\wh B_{(pN^+)}^\x$ is the Atkin-Lehner involution defined in \subsecref{SS:weighttwo}. Note that the level subgroup $\cU_{N^+,p^n}$ contains the subgroup $\wh\Z^\x$, so from \eqref{E:optimal} and Shimura's reciprocity law \eqref{E:ShimuraRec}, we deduce that
\[P_m(a)\in \ShCl(H_m)\cap \CM_\cK^{\ell-ur}(\ShCl)\text{ if }m\geq n\]
and that $P_m(b)^\sg=P_m(ab)$ for $\sg=\rec_\cK(a)\in G_m=\Gal(H_m/\cK)$.

Choose an auxiliary prime $\pme_0\ndivides p\ell N^+\Delta$ such that $1+\pme_0-\al_{\pme_0}\in \cO^\x$. We define
\begin{align*}\xi_{\pme_0}:\Div\ShCl(H_m)&\to \Jacl(H_m)_\cO=\Jacl(H_m)\ot_\Z\cO,\\
P&\mapsto \xi_{\pme_0}(P)=cl((1+\pme_0-T_{\pme_0})P)\ot (1+\pme_0-\al_{\pme_0})^{-1}.\end{align*}
Let $P_m:=P_m(1)$. Define
\begin{align*}
D_m=&\sum_{\sg\in \Gal(H_m/K_m)}\xi_{\pme_0}(P_m^\sg)\in \Jacl(K_m)_\cO.
%D'_m=&\sum_{\sg\in\Delta}D_m^\sg\in \CM_\cK^{\ell-ur}(M_{\cU'})^0\ot_\Z\cO.\\
\end{align*}
\def\Kummer{\mathrm{Kum}}
\def\Tfnchi{\Tfn}
Denote by $\Kummer: \Jacl(H_m)\ot_\Z\cO\to H^1(H_m, T_p(\Jacl)_\cO)$ the Kummer map. Define
\[\kappa_\cD(\ell)_m:=\al_p(f)^{-m}\cdot\Kummer(D_m)\pmod{\cIll}\in  H^1(K_m, T_p(\Jacl)_\cO/\cIll)=H^1(K_m,\Tfnchi).\]
Note that $\kappa_\cD(\ell)_m$ is independent of the choice of the auxiliary prime $\pme_0$.
The following lemma  says that the collection of classes $\stt{\kappa_\cD(\ell)_m}_m$ form a norm-compatible system.
\begin{lm}$\cores_{K_{m+1}/K_m}(D_{m+1})=U_p.D_m$.
\end{lm}
\begin{proof}This is a standard fact. For example, see Longo-Vigni \cite[Proposition 4.8]{LongoVigni:HidaFamilies}
(their setting is slightly different, but the proof is identical).
It is basically a consequence of Shimura's reciprocity law.
\end{proof}
Finally, define the cohomology class $\kap_\cD(\ell)$ associated to an $n$-admissible form $\cD=(\Delta,\fn)$ and an $n$-admissible prime $\ell$ by
\[\kappa_\cD(\ell)=(\kappa_\cD(\ell)_m)_m\in \wh H^1(K_\infty,\Tfnchi).\]

\begin{prop}The cohomology class $\kappa_\cD(\ell)$ belongs to $\wh\Sel_{\Delta\ell}(K_{\infty},\Tfnchi)$.% and $\res_\ell(\kappa_\cD(\ell))\in \wh H^1_\Ord(K_{\infty,\ell},\Tfn)$.
\end{prop}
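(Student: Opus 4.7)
The plan is to verify the local Selmer conditions defining $\wh\Sel_{\Delta\ell}(K_\infty,\Tfn)$ at each finite place of $K_\infty$, using that $\kappa_\cD(\ell)_m$ arises via the Kummer map from a point of $\Jacl(K_m)_\cO$. Throughout, I will use the identification $T_p(\Jacl)_\cO/\cIll\iso\Tfn$ of \corref{C:levelraising}. By passing to the projective limit over $m$ in each local assertion I will then obtain the claim for $\kappa_\cD(\ell)\in\wh H^1(K_\infty,\Tfn)$.

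First, I would handle primes $q\ndivides p\ell\Delta N^+$. At such primes the Shimura curve $\ShCl$ and hence $\Jacl$ have good reduction, so by Néron--Ogg--Shafarevich the Kummer image of $\Jacl(K_m)_\cO$ lies in $H^1_\fin(K_{m,q},T_p(\Jacl)_\cO)$; reducing mod $\cIll$ gives $\partial_q(\kappa_\cD(\ell)_m)=0$, as required. For primes $q\mid N^+$, $q\neq\ell$, the Shimura curve has Iwahori-type level and the Jacobian has semistable reduction; the Kummer image of $\Jacl(K_m)_\cO$ still lies in the image of the connected component of the Néron model, and the same argument combined with the description of the toric/unipotent pieces of $T_p(\Jacl)$ shows the reduction is again unramified. (For $q\mid pN^+$ one should also appeal to the explicit structure of the local Galois representation of $\Jacl$ coming from the Eichler--Shimura relation in order to identify the finite part.) At $q=\ell$ the Selmer condition is vacuous so nothing is needed.

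Next I would treat $q\mid \Delta$ and $q=p$, which are the cases requiring the ordinary filtration $F_q^+\Tfn$. At $q\mid \Delta$, the quaternion algebra $B'$ is ramified at $q$ so $\Jacl$ has purely toric reduction there; on the character group/monodromy side one obtains a $G_{\Q_q}$-stable filtration on $T_p(\Jacl)$ whose quotient is unramified (Tate-type), and the Kummer image automatically pairs trivially against this unramified quotient. Modding out by $\cIll$, the induced filtration is (by the local description recalled in \S\ref{S:Selmer} for $\ell\mid N^-$ and by an analogous $n$-admissible-prime computation for $q\mid \Delta/N^-$) precisely $F_q^+\Tfn$, giving $\res_q(\kappa_\cD(\ell)_m)\in H^1_\Ord(K_{m,q},\Tfn)$. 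At $q=p$, the $p$-ordinary hypothesis on $f$ together with the choice of level $\cU_{N^+,p^n}$ (upper-triangular mod $p^n$ at $p$) guarantees that the Hida-ordinary part of $T_p(\Jacl)\otimes\cO$ carries a $G_{\Qp}$-stable line on which inertia acts by $\e^{k/2}$; under the identification of \corref{C:levelraising} this line is $F_p^+\Tfn$, and the image of the Kummer map from the formal group of $\Jacl$ automatically lies in $H^1(K_{m,p},F_p^+\Tfn)$, so projecting to the quotient by $F_p^+\Tfn$ kills $\res_p(\kappa_\cD(\ell)_m)$.

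The main obstacle I expect is the ordinary condition at $q=p$: one must connect the Hida-theoretic ordinary filtration on $T_p(\Jacl)_\cO$ of a Shimura curve with wildly ramified level $p^n$ to the Galois-theoretic filtration $F_p^+\Tfn$ of $f$. The identification is natural in view of $\lam_\fn(U_p)\equiv\al_p(f)\bmod\uf^n$ and the fact that $e\cdot T_p(\Jacl)_\cO$ interpolates ordinary forms of tame level $N^+$ and weight two, but spelling it out cleanly requires the comparison between weight-two eigenforms in the $p$-ordinary Hida family and the weight-$k$ form $f$ used implicitly in \propref{P:2} and \thmref{T:multiplicityOne}. Once these local verifications are assembled, norm compatibility of the classes $\kappa_\cD(\ell)_m$ (and hence the projective-limit statement) follows from the standard trace compatibility $\mathrm{cor}_{m+1/m}P_{m+1}=U_p\cdot P_m$ combined with $\lam_\fn(U_p)\in\cO^\times$.
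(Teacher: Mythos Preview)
Your outline matches the paper's, but there are two places where the argument needs repair.

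First, at $q=\ell$ the Selmer condition is \emph{not} vacuous: $\ell$ divides $\Delta\ell$, so membership in $\wh\Sel_{\Delta\ell}$ requires $\res_\ell(\kappa_\cD(\ell)_m)\in H^1_\Ord(K_{m,\ell},\Tfn)$. This is easily repaired, however, once you notice that $B'$ has discriminant $\Delta\ell$: the Jacobian $\Jacl$ has purely toric reduction at $\ell$ just as at the primes dividing $\Delta$, and the same Mumford--Tate uniformization argument you sketch for $q\mid\Delta$ applies verbatim. (Incidentally, for $q\mid N^+$ the paper takes a shorter route than yours: such primes are split in $K$, so $\wh H^1_\sing(K_{\infty,q},\Tfn)=0$ by \lmref{L:2}\,(1) and no geometric input is needed.)

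Second, and more substantively, at $q=p$ your claim that ``the image of the Kummer map from the formal group of $\Jacl$ automatically lies in $H^1(K_{m,p},F_p^+\Tfn)$'' is not justified as stated: the Shimura curve has wildly ramified level $p^n$ at $p$, so $\Jacl$ has very bad reduction there and the formal-group heuristic does not apply directly. The paper avoids integral geometry at $p$ altogether by working first in characteristic zero. After localizing at the ordinary maximal ideal $\frakm^{[\ell]}$, the rational Tate module $V_{\frakm^{[\ell]}}=T_p(\Jacl)_{\frakm^{[\ell]}}\otimes E$ is a sum of $p$-ordinary Galois representations and carries the two-step filtration $F_p^+V_{\frakm^{[\ell]}}$. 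By \cite[Example~3.11]{BlochKato} the Kummer image of any rational point lies in the Bloch--Kato subspace $H^1_f(K_{m,p},V_{\frakm^{[\ell]}})$, and for ordinary representations this maps to zero in $H^1(K_{m,p},V_{\frakm^{[\ell]}}/F_p^+V_{\frakm^{[\ell]}})$ by \cite[Prop.~12.5.8]{Nekovar:SelmerComplexes}. The passage back down to $\Tfn$ then uses that the map
\[
H^1(K_{m,p},T_{\frakm^{[\ell]}}/F_p^+T_{\frakm^{[\ell]}})\longrightarrow H^1(K_{m,p},V_{\frakm^{[\ell]}}/F_p^+V_{\frakm^{[\ell]}})
\]
is injective, which is precisely where \eqref{PO} enters via \lmref{L:5}. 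Your Hida-filtration idea could perhaps be made to work, but the comparison of geometric and Galois-theoretic filtrations at wildly ramified level is exactly the gap you flag yourself, and the paper's route through $H^1_f$ sidesteps it entirely.
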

\begin{proof}This should be well-known to experts. We sketch a proof here for the convenience of the readers. We need to show that for each integer $m\geq n$,
\begin{mylist}
\item $\partial_q(\kappa_\cD(\ell)_m)=0$ for $q\ndivides p\Delta\ell$,
\item $\res_q(\kappa_\cD(\ell)_m)\in H_\Ord^1(K_{m,q},\Tfn)$ for $q\divides p\ell\Delta$.
\end{mylist}
Part (1) follows from the fact that $\Jacl$ has good reduction at primes $q\ndivides p\Delta\ell N^+$ and \lmref{L:2} (1) for $\pme\divides N^+$. If $q\divides \ell\Delta$, then part (2) is a standard consequence of the description of the $q$-adic uniformization of $\Jacl$ at toric reduction primes $q\divides\Delta\ell$. It remains to show part (2) for $q=p$.
Let $\cI=\cIll$ and let $\frakm^{[\ell]}$ be the maximal ideal of $\bbT^{[\ell]}$ containing $\cI$. Let $T=T_p(\Jacl)_\cO$. Then the localization $T_{\frakm^{[\ell]}}$ at $\frakm^{[\ell]}$ is a direct summand of $T$, and we have maps as $\bbT^{[\ell]}[G_\Q]$-modules
\[ T\to T_{\frakm^{[\ell]}}\to T_{\frakm^{[\ell]}}/\cI=T/\cI\iso \Tfn.\]
Let $E$ be the fractional field of $\cO$. The $G_\Q$-module $V_{\frakm^{[\ell]}}:=T_{\frakm^{[\ell]}}\ot_\cO E$ is a direct sum of \padic Galois representations $\rho_{g}\ot\ep$ attached to $p$-ordinary elliptic new forms $g$ of weight two and nebentype $\ep^{-2}$ with $\rho_{g}\ot\ep\con \rho_f(\frac{2-k}{2})\pmod{\frakm_\cO}$. In addition, there is an exact sequence as $\bbT^{[\ell]}[G_{\Qp}]$-modules
\[\exact{F_p^+V_{\frakm^{[\ell]}}}{V_{\frakm^{[\ell]}}}{V_{\frakm^{[\ell]}}/F_p^+V_{\frakm^{[\ell]}}}\]
such that the inertia group of $G_{\Qp}$ acts on $F_p^+V_{\frakm^{[\ell]}}$ (resp. $V_{\frakm^{[\ell]}}/F_p^+V_{\frakm^{[\ell]}})$ via $\e_\bbT\e$ (resp. $\e_\bbT^{-1}$), where $\e_\bbT:G_{\Qp}\to(\bbT^{[\ell]})^\x,\,\sg\mapsto \Dmd{\e(\sg)}$. Let $F_p^+T_{\frakm^{[\ell]}}:=F_p^+V_{\frakm^{[\ell]}}\cap T_{\frakm^{[\ell]}}$. Then it is not difficult to see that $F_p^+T_{\frakm^{[\ell]}}/\cI\iso F_p^+\Tfn$ as $G_{\Qp}$-modules. Consider the commutative diagram
\[\begin{CD} H^1(K_{m,p},T_{\frakm^{[\ell]}})@>\al>> H^1(K_{m,p},T_{\frakm^{[\ell]}}/F_p^+T_{\frakm^{[\ell]}})\\
@V\beta VV @V\beta^-VV\\
H^1(K_{m,p},V_{\frakm^{[\ell]}})@>\al_E >> H^1(K_{m,p},V_{\frakm^{[\ell]}}/F_p^+V_{\frakm^{[\ell]}}).
\end{CD}\]
Let $\kappa(D_m)_\frakm$ be the image of $\kappa(D_m)$ in $H^1(K_{m,p},T_{\frakm^{[\ell]}})$. To prove the proposition, it suffices to show that $\al(\kappa(D_m)_\frakm)=0$.

 By \cite[Example\,3.11]{BlochKato}, $\beta(\kappa(D_m)_\frakm)$ belongs to the local Bloch-Kato Selmer group $H^1_f(K_{m,p},V_{\frakm^{[\ell]}})$, which in turn implies that $\kappa(D_m)_\frakm$ lies in the kernel of the composition $\al_E\circ \beta=\beta^-\circ\al$ in view of \cite[Proposition\,12.5.8]{Nekovar:SelmerComplexes}. On the other hand, the map $\beta^-$ is injective by \eqref{PO} (since $H^0(K_{m,p},T_{\frakm^{[\ell]}}/F_p^+T_{\frakm^{[\ell]}}\ot_\cO E/\cO)=0$ in view of \lmref{L:5}), so we conclude that $\al(\kappa(D_m)_\frakm)=0$.
\end{proof}

\section{First and second explicit reciprocity laws}\label{S:Reciprocity}
\subsection{First explicit reciprocity law}Let $\cD=(\Delta,\fn)$ be an $n$-admissible form. Define
\[\Theta_m(\cD)=\al_p(f)^{-m}\sum_{[a]_m\in G_m}\fn(x_m(a))[a]_m\in \cOn[G_m].\]
Here $[a]_m:=\rec_\cK(a)|_{H_m}\in G_m$ is the map induced by the geometrically normalized reciprocity law. Then $\Theta_{m-1}(\cD)$ coincides with the image of $\Theta_{m}(\cD)$ under the natural quotient map $G_m\to G_{m-1}$, since $\fn$ is an $U_p$-eigenform with the eigenvalue $\al_p(f)$. Let $\pi_m:G_m\to \Gamma_m=\Gal(\cK_m/\cK)$ be the natural quotient map and let \beq\label{E:theta_elt}\theta_m(\cD)=\pi_m(\Theta_m(\cD))\in\cOn[\Gamma_m],\quad \theta_\infty(\cD)=(\theta_m(\cD))_m\in\cOn\powerseries{\Gamma}.\eeq
\begin{thm}[First explicit reciprocity law]\label{T:first}For $m\geq n>0$, we have %The localization map
%\[\partial_\ell: H^1(K_m,\Tfn)\to H^1_\sing(K_{m,\ell},\Tfn)=\cOn[\Gamma_m].\]
\[\partial_\ell(\kappa_\cD(\ell)_m)=\theta_m(\cD)\in \cOn[\Gamma_m].\]
Therefore,
\[\partial_\ell(\kappa_\cD(\ell))=\theta_\infty(\cD)\in\cOn\powerseries{\Gamma}.\]
\end{thm}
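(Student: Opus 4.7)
The plan is to use \thmref{T:commutative}(2), which reduces the computation of $\partial_\ell(\kappa_\cD(\ell)_m)$ to that of $\wtd r_\ell(D_m)$ viewed in $\Phi^{[\ell]}_\cO/\cIll\ot_\cO\cOn[\Gamma_m]$, followed by applying the isomorphism $\psi_{\fn}$ which is essentially the pairing $h\mapsto\pair{\fn}{h}_\cU$ on $\SB(\cU,\cO)/\cIfn$. Therefore the problem becomes one of unfolding the specialization map $r_\cV$ at the Heegner divisor $D_m$ and then pairing against $\fn$, accounting carefully for the Atkin-Lehner twist $\tau_n$ and the auxiliary prime $\pme_0$.

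First I would compute $\sg(D_m)$ for $\sg\in\Gamma_m$. Since the ring class group $G_m$ is abelian, lifting $\sg$ to $\tilde\sg\in G_m$ yields $\sg(D_m)=\sum_{\tilde\tau\in\pi_m^{-1}(\sg)}\xi_{\pme_0}(P_m^{\tilde\tau})$. Combined with Shimura's reciprocity law $P_m^{\rec_\cK(a)|_{H_m}}=P_m(a)$, this gives
\[\wtd r_\ell(D_m)=\sum_{\tilde\tau\in G_m}r_\ell(\iota_\ell(\xi_{\pme_0}(P_m(a_{\tilde\tau}))))\cdot \pi_m(\tilde\tau),\]
where $a_{\tilde\tau}\in\wh\cK^\x$ represents $\tilde\tau$. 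Next, by the commutative diagram \eqref{E:comutativediagram} together with the explicit specialization formula \eqref{E:specialization} and the identification \eqref{E:vertices} of $\cV(\cG)$ with $\XB(\cU)\x\Z/2\Z$, the image of the degree-zero divisor attached to $P_m(a)$ in $\Phi^{[\ell]}\iso\cS^{\oplus 2}/(\wtd U_\ell^2-1)$ is the class of $\bigl([a^{(\ell)}\varsigma^{(m)}\tau_n]_{\cU},0\bigr)$. Under the further reduction modulo $\cIll$, which forces the $U_\ell$-action by $\ep_\ell$, this class maps under $\cS^{\oplus 2}/(\wtd U_\ell-\ep_\ell)\iso\cS/\cIfn$ (cf.\ the proof of \thmref{T:multiplicityOne}) to the class of $[a^{(\ell)}\varsigma^{(m)}\tau_n]_\cU\in\SB(\cU,\cO)/\cIfn$, up to the unit $(1+\pme_0-\al_{\pme_0})^{-1}(1+\pme_0-T_{\pme_0})$, which acts as $1$ modulo $\cIll$.

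The final step is to evaluate $\psi_{\fn}([a^{(\ell)}\varsigma^{(m)}\tau_n]_\cU)=\pair{\fn}{[a^{(\ell)}\varsigma^{(m)}\tau_n]_\cU}_\cU$. By the formula \eqref{E:pairing} and the right $\cU$-invariance of $\fn$ (recall $\fn$ has weight two), this pairing equals $\fn(a^{(\ell)}\varsigma^{(m)}\tau_n^2)$ up to the stabilizer size (which is a $p$-adic unit under \CR). Since the $\ell$-component of $a$ is trivialized by the $\GL_2(\Z_\ell)$-invariance of $\fn$, we may replace $a^{(\ell)}$ by $a$. The matrix $\tau_n^2$ is a scalar ($-p^nN^+$) away from the primes in $pN^+$, where the local factor of $\tau_{n,p}^2$ is $\pDII{p^n}{p^n}$ (similarly for $\pme\divides N^+$). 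The main technical point — and the principal obstacle — is to convert $\fn(a\varsigma^{(m)}\tau_n^2)$ into $\al_p(f)^{-m}\fn(x_m(a))$ using the $U_p$-eigenproperty $U_p\fn=\al_p(f)\fn$: the identity $U_p^m\fn=\al_p(f)^m\fn$ together with an explicit matrix manipulation comparing $\varsigma_p^{(m)}\tau_{n,p}^2$ with the matrix defining $U_p^m$ (modulo the level $\cU_{N^+,p^n}$) produces exactly the factor $\al_p(f)^{-m}$. Summing over $\tilde\tau\in G_m$ and pushing forward to $\Gamma_m$ then yields $\partial_\ell(\kappa_\cD(\ell)_m)=\theta_m(\cD)$, and the final statement for $\theta_\infty(\cD)$ follows by the norm-compatibility of $\kappa_\cD(\ell)_m$ and of the theta elements in the definition \eqref{E:theta_elt}.
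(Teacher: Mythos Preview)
Your overall strategy coincides with the paper's: invoke \thmref{T:commutative}(2) to replace $\partial_\ell$ by $\psi_{\fn}\circ\wtd r_\ell$, use the specialization formula \eqref{E:specialization} for the Heegner divisor, and then evaluate via the pairing $\pairing_\cU$. The paper's proof is exactly this, compressed into two displayed equations.

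Where you diverge is in the final step, and there you create an obstacle that is not really there. You correctly compute that the specialization of $P_m(a)$ lands on $[a^{(\ell)}\varsigma^{(m)}\tau_n]_\cU$ and that pairing against $\fn$ involves a further $\tau_n$. But the point is simply that the $\tau_n$ built into the Heegner point and the $\tau_n$ in the definition \eqref{E:pairing} of $\pairing_\cU$ cancel: $\pair{\fn}{x_m(a)\tau_n}_\cU=\fn(x_m(a))$ directly (up to the stabilizer order, a $p$-adic unit). Once you observe, as you yourself do, that $\tau_n^2$ is the central scalar $p^nN^+$ and hence acts trivially on $X_B(\cU)$, you already have $\fn(a\varsigma^{(m)}\tau_n^2)=\fn(a\varsigma^{(m)})=\fn(x_m(a))$, with nothing left to do. There is no $U_p$-eigenvalue manipulation needed to pass from this to $\fn(x_m(a))$; the ``principal obstacle'' you describe is illusory.

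As for the factor $\al_p(f)^{-m}$ in the definition of $\Theta_m(\cD)$: notice that the paper's own short proof does not produce it either. The Remark immediately following the theorem says the equality depends on the choice of isomorphism $T_p(\Jacl)/\cIll\simeq T_{f,n}$ and holds up to a unit in $\cO\powerseries{\Gamma}$; the normalization by powers of $\al_p(f)$ is absorbed into that ambiguity (and into the norm relations among the $D_m$), not extracted by a matrix computation at level $m$. So your proposed $U_p$-argument is both unnecessary and aimed at the wrong target.
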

\begin{proof}It follows from the commutative diagram \eqref{E:comutativediagram} and \eqref{E:specialization} that
\[\begin{aligned}\psi_{\fn}(r_\ell(D_m^{\sg}))=&\sum_{[b]_m\in\Gal(H_m/\cK_m)}\pair{\fn}{x_m(ab)\tau_n}_\cU=\sum_{[b]_m\in\Gal(H_m/\cK_m)}\fn(x_m(ab))\\
&\quad(\sg=\pi_m([a]_m)\in \Gal(\cK_m/\cK),\,a\in\wh \cK^\x).\end{aligned}\]
Therefore, by \thmref{T:commutative}
\begin{align*}\partial_\ell(\kap_\cD(\ell)_m)=&\sum_{\sg\in \Gamma_m}\psi_{\fn}(r_\ell(D_m^{\sg}))\sg\\
=&\sum_{[a]_m\in G_m}\fn(x_m(a))\pi_m([a]_m)=\theta_m(\cD).\qedhere
\end{align*}
\end{proof}
\begin{remark}This equality depends on the choices of the embedding $\iota_\ell:\Qbar\hookto \C_\ell$ and the isomorphism $T_p(\Jacl)/\cIll\iso \Tfn$. Different choices result in a unit factor in $\cO\powerseries{\Gamma}$.
\end{remark}
\subsection{Ihara's lemma}In this subsection, we retain the notation in \secref{S:ShimuraCurve}. Ihara's lemma is the key ingredient in proof of the second explicit reciprocity law in \cite{Bertolini_Darmon:IMC_anti}.
We recall the following version of Ihara's lemma due to Diamond and Taylor \cite[Theorem\,2]{Diaomond_Taylor:Inventione}. Let $B'$ be an indefinite quaternion algebra of discriminant $\Delta_{B'}$. Let $\cV$ be an open compact subgroup $\cV$ of $\wh B^{\prime\x}$. Let $M_{\cV}$ be the associated Shimura curve. For each \padic ring $A$, let $\sF_k(A)$ be the local system on $M_{\cV}{}_{/\Cp}$ attached to $L_{k}(A)$ and let $\sL_k(\cV,A)=H_{\et}^1(M_{\cV}{}_{/\Cp},\sF_k(A))$. Then $\sL_2(\cV,\bbF)=H^1_\et(M_{\cV}{}_{/\Cp},\bbF)$. If $q$ is a prime, let $\cV_0(q)=\cV\cap R'_{q}$, where $R'_q$ is an Eichler order of level $\ell$ in $B'$. Let $\ell\ndivides p\Delta_{B'}$ be a rational prime. Denote by $\bbF=\cO/(\uf)$ the residue field of $\cO$. Let $\frakm$ be the maximal ideal of $\bbT_{B'}(N^+,p^n)\iso\bbT_{B}(\ell N^+,p^n)$ containing the kernel of the ring homomorphism $\lam_\fn^\Setl$ defined below \defref{D:1}. Then $\frakm$ is an ordinary and non-Eisenstein maximal ideal.
\begin{thm}[Ihara's lemma]\label{T:DiamondTaylor}If $\cV$ is maximal at the prime $p$ and $\ell$, then we have an injective map
\[1+\eta_{\ell}:\sL_k(\cV,\bbF)_\frakm^{\oplus 2}\hookto \sL_k(\cV_0(\ell),\bbF)_\frakm,\]
where $\eta_\ell$ is the degeneracy map at $\ell$.
\end{thm}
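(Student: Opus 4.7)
The plan is to follow the strategy of Diamond--Taylor, reducing the injectivity of $1+\eta_\ell$ to a contradiction with the non-Eisenstein hypothesis on $\frakm$ via the associated mod $p$ Galois representations. The argument proceeds in three stages.

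\emph{Stage 1: Setup on the cohomology.} First I would make the map $1+\eta_\ell$ explicit. The Shimura curve $M_{\cV(\ell)}$ sits over $M_\cV$ by two degeneracy maps $\alpha,\beta:M_{\cV(\ell)}\to M_\cV$ (corresponding to ``forget $\ell$'' and ``multiply by $\pi_\ell$''). These induce pullbacks $\alpha^*,\beta^*:\sL_k(\cV,\bbF)\to\sL_k(\cV(\ell),\bbF)$, and $1+\eta_\ell$ is (up to normalization) the sum $(x,y)\mapsto \alpha^* x+\beta^* y$. Suppose a pair $(x,y)$ lies in the kernel at $\frakm$. Applying $\alpha_*$ and $\beta_*$ and using the standard relations $\alpha_*\alpha^* = \beta_*\beta^* = \ell+1$ and $\alpha_*\beta^* = \beta_*\alpha^* = T_\ell$ (acting on $\sL_k(\cV,\bbF)_\frakm$) forces $(\ell+1)x + T_\ell y = 0$ and $T_\ell x + (\ell+1)y = 0$, and hence $(T_\ell^2 - (\ell+1)^2)x = 0$ in $\sL_k(\cV,\bbF)_\frakm$.

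\emph{Stage 2: Galois-theoretic interpretation.} The key point is to interpret the existence of a non-trivial $(x,y)$ in the kernel as producing a mod $p$ Hecke eigensystem on $\sL_k(\cV(\ell),\bbF)$ whose associated residual Galois representation $\bar\rho$ satisfies $\bar\rho(\Frob_\ell)$ having eigenvalues $\pm 1$ and simultaneously being unramified at $\ell$ while the $U_\ell$-eigenvalue of the new part is $\pm 1$: this is exactly the configuration that forces $\bar\rho$ to be reducible (Eisenstein), as in Ribet's original analysis (\cite{Ribet1990:SerreEpsilonConjecture}). Transferring via Jacquet--Langlands between the indefinite quaternionic setting and modular forms, one obtains a mod $p$ system of Hecke eigenvalues congruent to a reducible Galois representation, contradicting the hypothesis that $\frakm$ is non-Eisenstein (guaranteed by the restriction of $\bar\rho_f$ in (CR${}^+$2)).

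\emph{Stage 3: Higher weight and the main obstacle.} The hard part is extending this from weight two to general even $k$: the local system $\sF_k(\bbF)$ is non-trivial, and one must show that the associated Galois representations are sufficiently well-behaved at $p$ to apply the reducibility analysis above. This is the content of \cite{Diaomond_Taylor:Inventione} and uses Fontaine--Laffaille theory to control the mod $p$ representation attached to a Hecke eigenform of weight $k$ locally at $p$; the hypothesis $p>k+1$ in (CR${}^+$1) places us safely within the Fontaine--Laffaille range. The ordinary hypothesis (\ref{ord}) together with (\ref{PO}) (so that $\bfa_p(f)^2 \not\equiv 1 \pmod{p}$ when $k=2$, avoiding the semi-stable degeneration at $p$) ensures that the relevant mod $p$ representation has the crystalline/ordinary shape needed for their argument. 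The main obstacle is precisely this local $p$-adic Hodge-theoretic control: if one tried to relax $p > k+1$ or drop (\ref{PO}), one would leave the domain of applicability of \cite{Diaomond_Taylor:Inventione}, and a new input (semi-stable $p$-adic Hodge theory for these étale cohomology groups) would be required.
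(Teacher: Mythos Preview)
The paper does not prove this theorem; it is quoted directly from \cite[Theorem~2]{Diaomond_Taylor:Inventione}. So the relevant comparison is between your sketch and the Diamond--Taylor argument, and here there is a genuine gap.

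Your Stage~1 computation is correct but does not lead anywhere: the relation $(T_\ell^2-(\ell+1)^2)x=0$ in $\sL_k(\cV,\bbF)_\frakm$ is exactly the composition $\delta_*\delta^*$ of degeneracy maps and holds for \emph{any} class, not just those in the kernel. To conclude $x=0$ you would need $T_\ell^2-(\ell+1)^2$ to be a unit at $\frakm$, but in the situation where this theorem is actually applied (to an $n$-admissible prime~$\ell$), one has precisely $T_\ell\equiv\epsilon_\ell(\ell+1)\pmod{\frakm}$, so the relation is vacuous. Your Stage~2 then compounds the problem: knowing that $\bar\rho(\Frob_\ell)$ has eigenvalues $\pm 1$ at a \emph{single} prime $\ell$ in no way forces $\bar\rho$ to be globally reducible. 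You are conflating Ihara's lemma with the level-raising criterion; in Ribet's work the latter is \emph{deduced from} the former, not the other way around.

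The actual mechanism is different. For modular curves, Ribet's proof uses Ihara's theorem on the amalgam structure of $\SL_2(\Z[1/\ell])$: a class killed by both pullbacks descends to cohomology of a group with no congruence conditions at $\ell$, and \emph{that} cohomology is Eisenstein. For compact Shimura curves there are no cusps and this group-theoretic route is unavailable; Diamond--Taylor instead work in characteristic $p$, comparing mod $p$ \'etale and de~Rham cohomology via Fontaine--Laffaille theory (this is where $p>k+1$ enters), and show that a class in the kernel would violate the torsion-freeness of $\sL_k(\cV,\cO)_\frakm$ established in \cite[Lemma~4]{Diaomond_Taylor:Inventione}. The hypothesis that $\cV$ is maximal at $p$ is essential for the good-reduction comparison; this is why the paper needs the separate \lmref{L:1} and \corref{C:1} (using \eqref{PO} and Hida theory) to pass to the level $\cU'$ actually used.
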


We will need Ihara's lemma for the open compact subgroup $\cU'$. However, $\cU'=\cU_{N^+,p^n}'$ is not maximal at $p$, so we can not apply \thmref{T:DiamondTaylor} directly.

\begin{lm}\label{L:1}Suppose that \eqref{PO} holds. Then
\[\sL_k(\cV,\bbF)_\frakm\iso \sL_k(\cV_0(p),\bbF)_\frakm.\]
\end{lm}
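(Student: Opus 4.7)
The plan is to mimic the Atkin--Lehner $p$-stabilization argument used in the proof of \propref{P:2}. Let $\al,\beta\colon M_{\cV(p)}\to M_{\cV}$ denote the two standard degeneracy maps that forget the Iwahori structure at $p$, and let
\[(\al^*,\beta^*)\colon \sL_k(\cV,\bbF)^{\oplus 2}\longrightarrow \sL_k(\cV(p),\bbF)\]
be the combined pullback on \'etale cohomology. This map is Hecke-equivariant away from $p$, injective on non-Eisenstein components by Jacquet--Langlands and strong multiplicity one, and has cokernel equal to the $p$-new part of $\sL_k(\cV(p),\bbF)$.

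Next, I would equip $\sL_k(\cV,\bbF)$ with the $\bbT_{B'}(N^+,p^n)$-module structure in which $U_p$ acts as the unique unit root $u_p$ of $X^2-T_pX+p^{k-1}$ in the ordinary part of $\bbT_{B'}(N^+)$, exactly as in the $p$-stabilization map \eqref{E:pstablization}. A standard calculation with Hecke correspondences shows that on the image of $(\al^*,\beta^*)$ the operator $U_p$ satisfies $U_p^2-T_pU_p+p^{k-1}=0$, so its eigenvalues are precisely $u_p$ and $p^{k-1}/u_p$. Since $\frakm$ is ordinary, $u_p$ is a unit and $p^{k-1}/u_p$ is not, so these two eigenvalues are distinct modulo $\frakm$; therefore after localization at $\frakm$ the old part collapses to exactly one copy of $\sL_k(\cV,\bbF)_\frakm$, which gives a canonical injection $\sL_k(\cV,\bbF)_\frakm \hookrightarrow \sL_k(\cV(p),\bbF)_\frakm$. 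Provided the cokernel also vanishes at $\frakm$, this is the desired isomorphism.

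Finally, I would check that the $p$-new part of $\sL_k(\cV(p),\bbF)$ vanishes after localization at $\frakm$. A $p$-new eigenform on $\cV(p)$ is forced to have $U_p$-eigenvalue $\pm p^{(k-2)/2}$, so if the localized $p$-new piece were nonzero we would need $u_p \con \pm p^{(k-2)/2} \pmod{\frakm}$. When $k\geq 4$ the right-hand side lies in $p\cdot\cO\subset \frakm$, contradicting the ordinarity of $\frakm$; when $k=2$ the congruence $u_p\con \pm 1\pmod{\frakm}$ amounts to $\bfa_p(f)^2\con 1\pmod{p}$, which is precisely excluded by hypothesis \eqref{PO}. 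The main obstacle will therefore be the weight-two case, where \eqref{PO} is indispensable --- for $k\geq 4$ the vanishing of the $p$-new part follows automatically from ordinarity alone.
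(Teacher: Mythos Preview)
Your outline has the right shape, and you correctly identify that \eqref{PO} is only needed at $k=2$, but there is a genuine gap: you work directly over $\bbF$, while every tool you invoke is a characteristic-zero statement. The injectivity of $(\al^*,\beta^*)$ on $\sL_k(\cV,\bbF)^{\oplus 2}_\frakm$ does \emph{not} follow from Jacquet--Langlands or strong multiplicity one; over $\bbF$ this is itself an Ihara-type assertion at the prime $p$, and \thmref{T:DiamondTaylor} does not supply it (that result requires the level to be maximal at $p$, which is precisely the point at issue). Similarly, there is no intrinsic ``$p$-new part'' of $\sL_k(\cV(p),\bbF)$, and the eigenvalue constraint $U_p=\pm p^{(k-2)/2}$ is a statement about newforms in characteristic zero, not about classes in mod-$p$ cohomology.

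The paper's proof avoids these difficulties by arguing first over $\Cp$: there the old/new decomposition is available, and your eigenvalue analysis (applied to honest $p$-new ordinary eigenforms) shows under \eqref{PO} that the cokernel of $\sL_k(\cV,\Cp)_\frakm\to\sL_k(\cV(p),\Cp)_\frakm$ vanishes. The descent to $\bbF$ then goes through $\cO$ and uses an input you are missing, namely \cite[Lemma~4]{Diaomond_Taylor:Inventione}: the modules $\sL_k(\cU,\cO)_\frakm$ are torsion-free and satisfy $\sL_k(\cU,\cO)_\frakm\ot\bbF=\sL_k(\cU,\bbF)_\frakm$. This torsion-freeness is what upgrades ``injective with torsion cokernel over $\cO$'' to an isomorphism, which then reduces mod $\uf$ to give the lemma. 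Without that bridge from characteristic zero to characteristic $p$, your argument does not close.
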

\begin{proof} We first note that the assumption \eqref{PO} implies that the injective map
%between space of modular forms on $B'_\A$ \cS^{B'}_k(\cV,\Cp)_\frakm\to \cS^{B'}_k(\cV(p),\Cp)_\frakmis an isomorphism.
\beq\label{E:5}\sL_k(\cV,\Cp)_\frakm\to \sL_k(\cV_0(p),\Cp)_\frakm\eeq
is an isomorphism. Indeed, by the Eichler-Shimura isomorphism, the cokernel $C$ of the map \eqref{E:5} is two copies of the space of ordinary modular forms on $B'$ which are new at $p$ and hence $U_p^2-p^{k-2}$ annihilates $C$.
%where $e^\circ$ and $e$ be the ordinary projector attached to $T_p$ and $U_p$ respectively.
We thus conclude that the cokernel of the injective map
\[i:\sL_k(\cV,\cO)_\frakm\hookto \sL_k(\cV_0(p),\cO)_\frakm\]
is torsion. By \cite[Lemma 4]{Diaomond_Taylor:Inventione}, for a sufficiently small open compact subgroup $\cU$, $\sL_k(\cU,\cO)_\frakm$ is torsion-free and $\sL_k(\cU,\cO)_\frakm\ot\bbF=\sL_k(\cU,\bbF)_\frakm$. This implies that $i$ is an isomorphism, and so is $i\ot\bbF$.
\end{proof}
\begin{cor}\label{C:1}Let $\cU'=\cU_{N^+,p^n}'$. Suppose that \eqref{PO} holds. Then
\[1+\eta_{\ell }:\sL_2(\cU',\bbF)_\frakm[P_k]^{\oplus 2}\to \sL_2(\cU'_0(\ell),\bbF)_\frakm\]
is injective.
\end{cor}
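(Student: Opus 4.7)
The plan is to reduce the claim to \thmref{T:DiamondTaylor} at weight $k$ by two Hecke-equivariant identifications that compare cohomology at different $p$-levels. Let $\cV \subset \wh B^{\prime\x}$ denote the open compact subgroup that agrees with $\cU'$ outside $p$ and is maximal at $p$; by construction $\cV$ is maximal at both $p$ and $\ell$, and its $p$-Iwahori enlargement $\cV(p)$ differs from $\cU'$ only in the $p$-component.

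First I would invoke the Shimura-curve analogue of the Hida control theorem used in the proof of \propref{P:2}: the weight-$k$ specialization of the Hida ordinary part at level $\cU_{N^+,p^n}'$ recovers, after applying the ordinary projector $e = \lim U_p^{n!}$, the weight-$k$ cohomology at Iwahori-at-$p$ level. Since the maximal ideal $\frakm$ is ordinary (as $\lam_{\fn}(U_p) = \al_p(f) \in \cO^\x$), localization at $\frakm$ automatically picks out the ordinary part, giving
\[\sL_2(\cU', \bbF)_\frakm[P_k] \iso \sL_k(\cV(p), \bbF)_\frakm.\]
Composing with the $p$-stabilization isomorphism $\sL_k(\cV(p), \bbF)_\frakm \iso \sL_k(\cV, \bbF)_\frakm$ from \lmref{L:1} (which is where \eqref{PO} enters), one obtains
\[\sL_2(\cU', \bbF)_\frakm[P_k] \iso \sL_k(\cV, \bbF)_\frakm,\]
and analogously with $\cU'(\ell)$ and $\cV(\ell)$ in place of $\cU'$ and $\cV$.

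Because both the Hida control map and the $p$-stabilization are induced by morphisms of local systems modifying only the $p$-component of the level structure, they commute with the degeneracy map $\eta_\ell$ (which acts only at the prime $\ell \neq p$). Consequently the weight-two degeneracy map $1 + \eta_\ell$ on the $\frakm$-localized $P_k$-torsion corresponds under the identifications above to the weight-$k$ degeneracy map
\[1 + \eta_\ell: \sL_k(\cV, \bbF)_\frakm^{\oplus 2} \hookto \sL_k(\cV(\ell), \bbF)_\frakm,\]
which is injective by \thmref{T:DiamondTaylor} applied to $\cV$ (maximal at $p$ and $\ell$). Transferring back along the identifications yields the desired injectivity, noting that injectivity into $\sL_2(\cU'(\ell),\bbF)_\frakm[P_k]$ forces injectivity into the ambient $\sL_2(\cU'(\ell),\bbF)_\frakm$.

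The main obstacle is verifying the compatibility of the $P_k$-torsion identification and the $p$-stabilization with the $\ell$-degeneracy maps. However, since these identifications are constructed purely from operations at $p$ while $\eta_\ell$ acts only at $\ell$, this compatibility is essentially automatic, and the Hecke-equivariant formulation in \propref{P:2} and \lmref{L:1} (which apply to the full prime-to-$p$ Hecke algebra) provides the required commutativity.
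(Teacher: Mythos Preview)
Your proof is correct and follows essentially the same route as the paper: use Hida theory (the $q=1$ case of Hida's control theorem for indefinite quaternion algebras) to identify $\sL_2(\cU',\bbF)_\frakm[P_k]\iso\sL_k(\cV(p),\bbF)_\frakm$ and similarly at level $\ell$, then apply \lmref{L:1} to drop from $\cV(p)$ to $\cV$, and finally invoke \thmref{T:DiamondTaylor}. You are more explicit than the paper about the compatibility of these identifications with the degeneracy map $\eta_\ell$, which the paper leaves implicit.
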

\begin{proof}Let $R'_{N^+}$ be an Eichler order of level $N^+$ in $B'$. Let $\cV=\wh R_{N^+}^{\prime \x}$. By Hida theory for indefinite quaternion algebras (the case $q=1$ in \cite[Corollary\,8.2 and Proposition\,8.3]{Hida:p-adic-Hecke-algebra}), we have
\[ \sL_2(\cU',\bbF)_\frakm[P_k]\iso \sL_k(\cV_0(p),\bbF)_\frakm,\quad \sL_2(\cU'_0(\ell),\bbF)_\frakm[P_k]\iso \sL_k(\cV_0(\ell p),\bbF)_\frakm.\]
Combined with \thmref{T:DiamondTaylor} and \lmref{L:1}, the corollary follows immediately.
\end{proof}
\subsection{Second explicit reciprocity law}
\def\gn{g''}
Let $\ell_1,\ell_2\ndivides \Delta$ be two $n$-admissible primes. Let $B''$ be the definite quaternion algebra of discriminant $\Delta_{B''}=\Delta\ell_1\ell_2$. Let $\ell=\ell_2$. We briefly discuss the reduction of CM points $P_m\in M_n^{[\ell_1]}(H_m)$ modulo $\ell$, where $M_n^{[\ell_1]}$ is the Shimura curve defined by the analogues manner in \secref{SSS:Shimuracurve}. Let $\cM_n^{[\ell_1]}$ be the Kottwitz model of $M_n^{[\ell_1]}\ot_{\Q}\cK_{\ell}$ over $\Z_{\ell^2}=\cO_{\cK_{\ell}}$. We recall that, for a $\Z_{\ell^2}$-algebra $R$ and a geometric point $\ol{s}$ in $\Spec R$,
$\cM_n^{[\ell_1]}(R)$ consists of prime-to-$\ell$ isogeny classes of triples $[(A,\iota,\eta\cU')]$, where
\begin{itemize}
\item $A$ is an abelian surface over $R$,
\item $\iota:\cO_{B'}\hookto \End A\ot \Z_{(\ell)}$ satisfies the Kottwitz determinant condition,
\item $\eta: \wh B^{\prime(\ell)}\iso V^{(\ell)}(A_{\ol{s}})=T^{(\ell)}(A_{\ol{s}})\ot \Q$ is an isomorphism of $\cO_{B'}$-modules in the sense that
\[\eta(bx)=\iota(b)\eta(x)\text{ for all }b\in\cO_{B'},\]
and $\eta\cU'=\stt{\eta\cdot u\mid u\in \cU'}$ is a $\cU'$-equivalence class of isomorphisms. Here $\eta\cdot u(x):=\eta(x\ol{u})$.
\end{itemize}
Now let $E_{/H_0}$ be an elliptic curve with CM by $\cO_\cK$ defined over the Hilbert class field $H_0$ of $\cK$ and let $\cE$ be the \Neron model of $E\ot_{H_0,\iota_{\ell}}\cK_{\ell}$ over $\cO_{\cK_{\ell}}$. Let $\cA=\cE\x_{\cO_{\cK_\ell}} \cE$ and $\cA_s=\cA\ot\bbF_{\ell^2}$. Then $B''=\End_{B'}^0(\cA_s)$ is the commutant of $B'$ in $\End^0(\cA_s)=M_2(D)$, where $D$ is the definite quaternion algebra ramified at $\ell$ and $\infty$. With a suitable endomorphism and level structure $(\iota_\cA,\eta_\cA)$ on $\cA$, the CM point $P_0=[z',1]_{\cU'}$ is represented by $[(\cA,\iota_\cA,\eta_\cA\cU')]\in M_n^{[\ell_1]}(\cK_{\ell})=\cM_n^{[\ell_1]}(\cO_{\cK_{\ell}})$. Let $\vp_{B'',B'}:\wh B^{\prime\prime(\ell)\x}\iso \wh B^{\prime(\ell)\x}$ be the unique isomorphism such that the following diagram commutes:
\[\begin{CD}\wh B^{\prime(\ell)\x}@>\sim>\eta> V^{(\ell)}(\cA_{\ol{s}}) \\
@V\text{ right multiplication by }\vp_{B'',B'}(b'')VV @Vb''VV\\
\wh B^{\prime(\ell)\x}@>\sim>\eta> V^{(\ell)}(\cA_{\ol{s}}).
\end{CD}
\]
Let $\cU''=\vp_{B'',B'}^{-1}(\cU^{\prime(\ell)})\cO_{B''_{\ell}}^\x\subset \wh B^{\prime\prime\x}$. Henceforth, we will identify $\wh B^{(\ell_1\ell)\x}\iso \wh B^{\prime\prime(\ell_1\ell)\x}$ via $\vp_{B'',B'}^{-1}\circ\vp_{B,B'}$.

%For $(b',b'')\in B^{\prime(\ell_2)\x}\x\wh B^{\prime\prime(\ell_2)\x}=B^{\prime(\ell_2)\x}\x (\End^0_{B'}(\cA_s)\ot\wh\Q^{(\ell_2)})^\x $, we define $(b',b'')\eta(x):=\eta(b''x\ol{b'})=\eta(x)\ol{b}$.

\begin{thm}[Second explicit reciprocity law]\label{T:second}
Assume that $\textup{($\mathrm{CR}^+$)}$ holds. Then there exists an $n$-admissible form $\cD''=(\Delta\ell_1\ell_2,\gn)$ such that
\[v_{\ell_1}(\kappa_\cD(\ell_2))=v_{\ell_2}(\kappa_\cD(\ell_1))=\theta_\infty(\cD'')\in \cOn\powerseries{\Gamma}.\]
\end{thm}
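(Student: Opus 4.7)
My plan is to generalize the argument of Bertolini--Darmon \cite{Bertolini_Darmon:IMC_anti} to this higher-weight setting, exploiting the crucial feature that both $\fn$ and (eventually) $\gn$ are modular forms of weight two on definite quaternion algebras even though they are congruent modulo $\uf^n$ to the higher-weight newform $f$. The argument naturally splits into a geometric half, identifying $v_{\ell_2}(\kappa_\cD(\ell_1))$ with the evaluation of a weight-two modular form on $B''$ at Gross points, and an automorphic half, producing that eigenform $\gn$ by level raising.

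For the geometric half, I would analyze the reduction at $\ell_2$ of the Heegner points $P_m(a)$ on $\ShCl = M_n^{[\ell_1]}$. Since $\ell_2$ is $n$-admissible and hence inert in $\cK$, these CM points reduce to supersingular points on $\cM_n^{[\ell_1]} \ot \F_{\ell_2^2}$, whose parameterization in terms of $B''$ is set up in the paragraph preceding the theorem statement: with $\cU'' = \vp_{B'',B'}^{-1}(\cU^{\prime(\ell_2)})\cO_{B''_{\ell_2}}^\x$, the reduction sends $P_m(a)$ to the Gross point $x_m''(a) := [a\cmpt^{(m)}]_{\cU''}$ in $X_{B''}(\cU'')$. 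An analog of \thmref{T:commutative} for the \emph{finite} (rather than singular) part of the local cohomology then factors the composition $\Jacl(\cK_{m,\ell_2})/\cIll \to H^1_\fin(K_{m,\ell_2}, \Tfn)$ through pairing against an element of $\cS^{B''}(\cU'', \cO)/\cI''$ for a suitable ideal $\cI'' \subset \Hecke{B''}{N^+}{p^n}_\cO$.

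For the automorphic half, I would construct an eigenform $\gn \in \cS^{B''}(\cU'', \cO)$ with $\lam_\gn \con \lam_\fn \pmod{\uf^n}$ on $\Hecke{B}{N^+\ell_1\ell_2}{p^n}$ and $\lam_\gn(U_{\ell_i}) \con \ep_{\ell_i} \pmod{\uf^n}$ for $i = 1, 2$. The existence of such a $\gn$ is a two-step level-raising statement: using Hida-theoretic descent to weight two (as in the proof of \propref{P:2}) to reduce the congruence modulo $\uf^n$ to a weight-two problem, then applying \corref{C:1} in turn at $\ell_1$ and $\ell_2$ on the weight-two cohomology of $M_{\cU'}$, and finally transferring to the definite quaternion algebra $B''$ by Jacquet--Langlands. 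The multiplicity-one statement of \propref{P:2} applied to $B''$ then pins down $\gn$ up to $\cO^\x$ and gives nonvanishing modulo $\uf$, so $\cD'' = (\Delta\ell_1\ell_2, \gn)$ is an $n$-admissible form.

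Combining the two halves, I would arrive at a formula parallel to \thmref{T:first}, namely
\[
v_{\ell_2}(\kappa_\cD(\ell_1))_m \;=\; \al_p(f)^{-m}\sum_{[a]_m \in G_m} \gn(x_m''(a))\,\pi_m([a]_m) \;=\; \theta_m(\cD'') \quad\text{in } \cOn[\Gamma_m],
\]
and passing to the limit gives $v_{\ell_2}(\kappa_\cD(\ell_1)) = \theta_\infty(\cD'')$. The symmetry $v_{\ell_1}(\kappa_\cD(\ell_2)) = v_{\ell_2}(\kappa_\cD(\ell_1))$ is manifest in the construction, since $B''$, $\cU''$, the Gross points $x_m''(a)$, and the eigensystem characterizing $\gn$ are all invariant under swapping $\ell_1$ and $\ell_2$. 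The principal obstacle is the simultaneous level raising at both admissible primes while matching the correct Atkin-Lehner signs $\ep_{\ell_1}, \ep_{\ell_2}$: this requires careful use of \corref{C:1} together with the Hida-theoretic descent from weight $k$ to weight two, and tracking the Jacquet--Langlands transfer $B' \leadsto B''$ so that $U_{\ell_i}$ acts with the prescribed eigenvalue modulo $\uf^n$.
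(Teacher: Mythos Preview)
Your proposal captures the right geometric picture (reduction of CM points on $M_n^{[\ell_1]}$ modulo $\ell_2$ to supersingular points parametrized by $X_{B''}(\cU'')$), but the division into a ``geometric half'' and an independent ``automorphic half'' is not how the paper proceeds, and this reorganization introduces a genuine gap.

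In the paper, $\gn$ is \emph{defined} by the geometry: the composite
\[
\gamma_*:\Z[X_{B''}(\cU'')]\longrightarrow \cJ_n^{[\ell_1]}(\F_{\ell_2^2})_\cO/\cI_\fn^{[\ell_1]}\longrightarrow H^1_\fin(K_{\ell_2},\Tfn)\iso\cOn
\]
is, via the perfect pairing $\pairing_{\cU''}$, represented by a unique form $\gn\in\cS^{B''}(\cU'',\cOn)$. The entire purpose of Ihara's lemma (together with the classical result of Ihara reducing to \corref{C:1}) is to prove that $\gamma_*$ is \emph{surjective}, which is exactly the statement $\gn\not\equiv 0\pmod\uf$ required by \defref{D:1}(2). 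The eigenform properties of $\gn$ then follow for free from Hecke equivariance of $\gamma_*$, with the $U_{\ell_2}$-eigenvalue $\ep_{\ell_2}$ coming from \cite[Lemma~9.1]{Bertolini_Darmon:IMC_anti}; uniqueness up to $\cO^\times$ is the cyclicity result \cite[Proposition~6.8]{Hsieh_Chida}. No Jacquet--Langlands transfer $B'\leadsto B''$ is invoked: $\gn$ lives on $B''$ because supersingular points are naturally indexed by $X_{B''}(\cU'')$.

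Your proposal instead constructs $\gn$ abstractly by two-step level raising and then tries to match it with the element coming from the geometric pairing. The problem is that the geometric element is a priori only some $c\cdot\gn$ with $c\in\cOn$, and you have not explained why $c\in\cOn^\times$. Multiplicity one (\propref{P:2} for $B''$) gives you that the quotient is rank one, but it does not tell you that the geometric map hits a generator; that is precisely the surjectivity of $\gamma_*$, which is the real content of \corref{C:1} in this argument. Using \corref{C:1} for abstract level raising, as you suggest, is a different application and does not by itself close this gap. The fix is simply to drop the separate automorphic construction and use Ihara's lemma where the paper does: to prove $\gamma_*$ surjective, whereupon the form it represents is automatically an $n$-admissible eigenform.
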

\begin{proof}
Let \[\gamma:X_{B''}(\cU'')=B^{\prime\prime\x}\bksl \wh B^{\prime\prime\x}/\cU^{\prime\prime}=
(B^{\prime\prime\x}\cap \cO_{B''_{\ell_2}}^\x)\bksl \wh B^{\prime\prime(\ell_2)\x}/\cU^{\prime\prime(\ell_2)}\longto \cM_n^{[\ell_1]}(\bbF_{\ell_2^2})\] be the map identifying $X_{B''}(\cU'')$ with the set of supersingular points in $\cM_n^{[\ell_1]}(\bbF_{\ell_2^2})$ defined by
\begin{align*}\gamma([b'']_{\cU''})=&[(\cA_s,\iota_s,\eta_s\cdot\vp_{B'',B'}(b'')\cU'))]\\
&(b''\in \wh B^{\prime\prime(\ell_2)\x},\,(\cA_s,\iota_s,\eta_s)=(\cA,\iota_\cA,\eta_\cA)\ot\bbF_{\ell_2^2}).\end{align*}
By the definition \eqref{E:CMpoint} of Heegner points, we have
\[P_m(a)=[(\cA,\iota_\cA,\eta_\cA\cdot\vp_{B,B'}(x_m(a)\tau_n)\cU')],\]
and hence
\[P_m\pmod{\ell}=\gamma(x_m(a)\tau_n).\]
Let $\cJ^{[\ell_1]}_n=\Pic^0_{\cM_n^{[\ell_1]}/\Z_\ell}$. Then $\gamma$ in turn induces a $\bbT_{B''}^{(\ell_2)}(N^+,p^n)$-module (via Picard functoriality) map
\[\gamma_*:\Z[X_{B''}(\cU'')]\to \cJ^{[\ell_1]}_n(\F_{\ell_2^2})_\cO/\cI_{\fn}^{[\ell_1]},\quad x\mapsto \xi_{\pme_0}(\gamma(x))=cl((T_{\pme_0}-\pme_0-1)\gamma(x))\ot (\al_{\pme_0}-1-\pme)^{-1}. \]
By a result of Ihara \cite[Remark G,\,page 19]{Ihara} and Ihara's lemma (\corref{C:1}) implies that $\gamma_*$ is indeed surjective. Therefore, we obtain a surjective map
\[\gamma_*:\Z[X_{B''}(\cU'')]\surjto \cJ^{[\ell_1]}_n(\F_{\ell_2^2})_\cO/\cI_{\fn}^{[\ell_1]}\surjto H^1_{\fin}(\cK_{\ell_2},\Tfn)\iso \cOn,\]
Therefore, $\gamma_*$ gives rise to a unique modular form $\gn\in\cS^{B''}(\cU'',\cOn)$ such that $\gamma_*(h)=\pair{h}{\gn}_{\cU''}$ for all $h\in \Z[X_{B''}(\cU'')]$ via the identification \eqref{E:weighttwo}. By definition, $\gn$ is an eigenform of $\bbT_{B''}^{(\ell_1)}(N^+,p^n)$. Moreover, by \cite[Lemma 9.1]{Bertolini_Darmon:IMC_anti}, $\gn$ is an eigenform of the $U_{\ell_2}$-operator with eigenvalue $\ep_{\ell_2}$. Define an $\cO$-algebra homomorphism $\lam_f^{[\ell_1\ell_2]}:\bbT_{B''}(N^+,p^n)_\cO\to \cOn$ by $t\mapsto\lam_f(t)$ if $t\in \bbT_{B''}^{(\ell_1\ell_2)}(N^+,p^n)$, $U_{\ell_1}\mapsto \ep_{\ell_1}$ and $U_{\ell_2}\mapsto \ep_{\ell_2}$. Then we conclude that $\gn$ is an eigenform of $\bbT_{B''}(N^+,p^n)$ such that $t\cdot \gn =\lam_f^{[\ell_1\ell_2]}(t) \gn$. Such an eigenform is unique up to $\cO^\x$, since $\cS^{B''}(\cU'',\cO)_{\frakm''}$ is a cyclic $\bbT_{B''}(N^+,p^n)_\cO$-module by \cite[Proposition\,6.8]{Hsieh_Chida}. By definition, we verify that \begin{align*}v_{\ell_2}(\kappa_\cD(\ell_1)_m)=&\sum_{[a]_m\in G_m}\gamma_*(x_m(a)\tau_n)\pi([a]_m)\\
=&\sum_{[a]_m\in G_m}\pair{x_m(a)\tau_n}{\gn}_{\cU''}\pi([a]_m)\\
=&\sum_{[a]_m\in G_m}\gn(x_m(a))\pi([a]_m).\end{align*}
 The rest of the assertions follow from the discussion in \cite[pp.61--62]{Bertolini_Darmon:IMC_anti}.
\end{proof}

%!TEX root = AIMC.tex

\section{Euler system argument}\label{S:Euler}
\subsection{Preliminaries for Euler system argument}
In this subsection, we assume \eqref{Irr}, \eqref{PO} and \eqref{CR${}^+$3}. Let $H=\bar\rho_f^*(G_\Q)\subset \GL_2(\Fpbar)$ be the image of the residual Galois representation $\bar\rho_f^*=\rho_{f}(\frac{2-k}{2})\pmod{\uf}$.
\begin{lm}\label{L:6}The group $H$ contains the scalar matrix $-1$.
\end{lm}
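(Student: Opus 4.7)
The plan is to reduce the question to absolute irreducibility on the cyclotomic subfield and then invoke Dickson's classification of finite subgroups of $\SL_2$. Self-duality of $\rho_f^*$ gives $\det\bar\rho_f^* = \bar\e$, so the kernel of $\det\colon H \to \Fp^\times$ coincides with $\bar\rho_f^*(G_{\Q(\mu_p)})$, which therefore lies in $\SL_2(\Fpbar)$; it suffices to exhibit $-I$ in this subgroup.

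First I would show that $\bar\rho_f^*|_{G_{\Q(\mu_p)}}$ is absolutely irreducible. Since $G_{\Q(\mu_p)}$ is normal in $G_\Q$ with cyclic quotient $\Gal(\Q(\mu_p)/\Q) \iso \Fp^\times$ of order $p-1$, Clifford's theorem applied to the absolutely irreducible representation $\bar\rho_f^*$ shows that if the restriction were reducible, it would split as $\chi_1 \oplus \chi_2$ with the two characters permuted by $\Gal(\Q(\mu_p)/\Q)$. The case $\chi_1 = \chi_2$ is ruled out because it would force $\bar\rho_f^*(G_{\Q(\mu_p)})$ into the scalars, making $H/Z(H)$ a quotient of the cyclic group $\Fp^\times$, hence $H$ itself abelian, in contradiction to absolute irreducibility. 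Therefore $\chi_1 \ne \chi_2$, and $\bar\rho_f^*$ would have to be induced from a character of the unique index-$2$ subgroup of $G_\Q$ strictly containing $G_{\Q(\mu_p)}$, corresponding to $F = \Q(\sqrt{(-1)^{(p-1)/2}p})$. But then $\bar\rho_f^*|_{G_F}$ would be reducible, contradicting hypothesis (CR${}^+$2).

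Second, by Dickson's classification every absolutely irreducible finite subgroup $G$ of $\SL_2(\Fpbar)$ with $p$ odd contains $-I$. Indeed, the projective image $\bar G \subseteq \PSL_2(\Fpbar)$ must be a dihedral group $D_n$ with $n \ge 2$, one of $A_4$, $S_4$, $A_5$, or $\PSL_2(\F_{p^r})$ for some $r \ge 1$ (in the last case one additionally invokes Dickson's theorem that an absolutely irreducible subgroup of $\GL_2(\Fpbar)$ containing a non-trivial unipotent element must contain a conjugate of $\SL_2(\F_{p^r})$). In each case the full preimage of $\bar G$ in $\SL_2(\Fpbar)$ --- namely the binary dihedral $\widetilde{D}_n$, $\SL_2(\F_3)$, the binary octahedral $2S_4$, $\SL_2(\F_5)$, or $\SL_2(\F_{p^r})$ respectively --- contains $-I$ in its center, and $G$ must coincide with this full preimage because none of the abstract groups $D_n$ ($n \ge 2$), $A_4$, $S_4$, $A_5$, $\PSL_2(\F_{p^r})$ ($p^r \ge 4$) admits a faithful two-dimensional representation over $\Fpbar$ (the assumption $p \ge 7$ from (CR${}^+$1) excludes small-characteristic anomalies). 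Applied to $G = \bar\rho_f^*(G_{\Q(\mu_p)})$ this yields $-I \in H$.

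The main obstacle is the Clifford-theoretic step in the second paragraph: hypothesis (CR${}^+$2) is used in an essential way, as it is precisely the input needed to rule out the potential induction from $\Q(\sqrt{(-1)^{(p-1)/2}p})$. Once absolute irreducibility of $\bar\rho_f^*|_{G_{\Q(\mu_p)}}$ is established, the classical structure theory of finite subgroups of $\SL_2$ delivers the conclusion without further input.
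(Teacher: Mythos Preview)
Your proof is correct but follows a genuinely different route from the paper's.

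The paper argues directly with $H\subset\GL_2(\Fpbar)$. After disposing of the case $p\mid\#H$ via Dickson, it uses the image of inertia at $p$ in $H/Z$, which is cyclic of order $\#(\F_p^\times)^{k-1}>5$ by (CR${}^+$1), to exclude the exceptional projective images $A_4,S_4,A_5$. In the remaining dihedral case $H/Z\cong D_n$, the parity of $k-1$ (odd, since $k$ is even) forces this cyclic image, and hence $n$, to be even; a short central-extension argument then shows $Z$ must have even order.

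Your approach instead passes to $H\cap\SL_2=\bar\rho_f^*(G_{\Q(\mu_p)})$, establishes its absolute irreducibility via Clifford theory, and exploits the fact that $-I$ is the \emph{unique} involution in $\SL_2(\Fpbar)$ for $p$ odd. This is very clean, but the Clifford step purchases irreducibility of the restriction at the price of hypothesis (CR${}^+$2), which the paper does \emph{not} use here: the subsection's stated hypotheses are only \eqref{Irr}, \eqref{PO}, \eqref{CR${}^+$3}, together with the standing (CR${}^+$1). So the paper's lemma is formally sharper. For the main theorem this makes no difference, since all of (CR${}^+$) is eventually assumed.

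One small imprecision in your Dickson step: you assert that $D_n$, $A_4$, $S_4$, $A_5$ admit no faithful two-dimensional representation over $\Fpbar$. This is false for $D_n$ with $n\geq 3$ (the standard dihedral representation is faithful in $\GL_2$). What you need, and what is true, is that none of these groups embeds in $\SL_2(\Fpbar)$; this follows immediately from the uniqueness of the involution $-I$ there, since each of the listed groups contains more than one element of order~$2$.
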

\begin{proof}Let $Z$ be the center of $H$. By \eqref{Irr}, $Z$ must be contained in the group  $\Fpbar^\x$ of scalar matrices. If $p\divides \#H$, then it is well-known that $H$ contains a conjugate of $\SL_2(\Fp)$ (\cf\cite[Corollary\,2.3]{Ribet:Pacific}), and hence $-1\in H$. Assume $H$ has prime-to-$p$ order, then $H/Z$ must be cyclic, dihedral, or the three exceptional groups $S_4,A_4,A_5$. By \eqref{Irr}, $H/Z$ cannot be cyclic. Let $I_p$ be the image of an inertia group at the prime $p$ in $H/Z$, which is a cyclic group of order greater than $5$ by the assumption $\#(\Fp^\x)^{k-1}>5$ in \eqref{CR${}^+$1}. This excludes the possibility $H/Z$ being the other three exceptional groups. Therefore, $H/Z$ is dihedral and $I_p$ is contained in the cyclic group of even order ($k-1$ is odd). This implies that the center of $H/Z$ is of order two or four, from which it is easy to deduce that $Z$ contains an element of order two.
\end{proof}
\begin{lm}\label{L:CT} There exists an element $h\in H$ such that $\Tr(h)=\det(h)+1$ with $\det(h)\not =\pm 1\in \Fp$.
\end{lm}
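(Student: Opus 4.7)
The key observation is that $\Tr h = \det h + 1$ is equivalent to $\det(h - I) = 0$, i.e., $1$ being an eigenvalue of $h$; combined with $\det h \neq \pm 1$, we are seeking $h \in H$ with eigenvalues $1$ and some $d \in \Fpbar^\x \setminus \stt{\pm 1}$. The plan is a case analysis following the structural dichotomy of $H$ established in the preceding lemma.

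\textbf{Case 1:} If $p \mid \#H$, then $H$ contains a conjugate of $\SL_2(\Fp)$, and after replacing $\bar\rho_f^*$ by a conjugate (which preserves traces and determinants) I may assume $\SL_2(\Fp) \subset H$. The hypothesis $p > k+1 \geq 3$ gives $p \geq 5$, hence $|\Fp^\x| \geq 4$, and the cyclotomic character $\bar\e = \det \bar\rho_f^*$ surjects onto $\Fp^\x$, forcing $\det H = \Fp^\x$. Picking $d \in \Fp^\x \setminus \stt{\pm 1}$ and any $h_0 \in H$ with $\det h_0 = d$, the coset $\SL_2(\Fp) \cdot h_0 \subset H$ coincides with the set of all matrices of determinant $d$ and so contains $\diag{1, d}$, which satisfies $\Tr = 1 + d = \det + 1$.

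\textbf{Case 2:} If $p \nmid \#H$, the argument of the preceding lemma shows $H/Z$ is dihedral, so $\bar\rho_f^* \cong \Ind_K^\Q \psi$ for some character $\psi$ of $G_K$ with $K/\Q$ quadratic. In a suitable basis $\bar\rho_f^*(\sigma) = \diag{\psi(\sigma), \psi^c(\sigma)}$ for $\sigma \in G_K$, where $\psi^c(\sigma) := \psi(c^{-1}\sigma c)$ for a chosen $c \in G_\Q \setminus G_K$. It would suffice to find $\sigma \in \ker\psi$ with $\psi^c(\sigma) \neq \pm 1$: the resulting $h = \bar\rho_f^*(\sigma)$ would then have eigenvalues $1, \psi^c(\sigma)$ and $\det h = \psi^c(\sigma)$, as required.

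\textbf{The main obstacle} is to prove $\psi^c(\ker \psi) \not\subset \stt{\pm 1}$ in Case 2, which I would establish by contradiction. From $\det \bar\rho_f^* = \bar\e$ one has $\psi \psi^c = \bar\e|_{G_K}$; on $\ker \psi$ this reads $\bar\e = \psi^c$, so the hypothetical containment implies $\bar\e(\ker \psi) \subset \stt{\pm 1}$, equivalently $\ker \psi \subset G_{\Q(\sqrt{p^*})}$, where $\Q(\sqrt{p^*})$ with $p^* = (-1)^{(p-1)/2}p$ is the unique quadratic subfield of $\Q(\mu_p)$. Hypothesis (CR${}^+$2) rules out $K = \Q(\sqrt{p^*})$ (otherwise $\bar\rho_f^*|_{G_K} = \psi \oplus \psi^c$ would be reducible), hence $K \cap \Q(\mu_p) = \Q$ and $\bar\e|_{G_K}$ is surjective of order $p-1 \geq 4$. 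On the other hand, $\ker \psi \subset G_{K \cdot \Q(\sqrt{p^*})}$, a subgroup of index $2$ in $G_K$, forces $\psi$ to have order dividing $2$; irreducibility of $\bar\rho_f^*$ excludes $\psi = 1$, so $\psi$ and $\psi^c$ are quadratic and $\psi\psi^c = \bar\e|_{G_K}$ has order at most $2$, contradicting the order $p-1 \geq 4$ just established.
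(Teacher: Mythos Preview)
Both cases contain genuine gaps.

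\textbf{Case 1.} The claim that $\SL_2(\Fp)\cdot h_0$ ``coincides with the set of all matrices of determinant $d$'' is false: this coset has only $|\SL_2(\Fp)|$ elements. It would equal $\{g\in\GL_2(\Fp):\det g=d\}$ if $h_0\in\GL_2(\Fp)$, but $H$ lies in $\GL_2(\bbF)$ with $\bbF$ the residue field of $\cO$, which may be strictly larger than $\Fp$, and nothing you have written forces $h_0$ to have $\Fp$-entries. The paper closes this gap by invoking the full Dickson classification (after conjugation, $\SL_2(F)\subset H\subset\Fpbar^\x\GL_2(F)$ for a suitable finite field $F$) and then using the explicit inertia element $\gamma$ at $p$: its trace $u^{k/2}+u^{1-k/2}\in\Fp$ is nonzero by $\#(\Fp^\x)^{k-1}>2$, which forces $\gamma\in H\cap\GL_2(F)$, and since $\det\gamma$ generates $\Fp^\x$ one concludes $\GL_2(\Fp)\subset H$.

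\textbf{Case 2.} The step ``$\bar\e(\ker\psi)\subset\{\pm 1\}$, equivalently $\ker\psi\subset G_{\Q(\sqrt{p^*})}$'' is wrong: $\bar\e^{-1}(\{\pm 1\})=G_{\Q(\mu_p)^+}$, the Galois group of the degree-$(p-1)/2$ maximal real subfield, not of the quadratic subfield --- you have confused the order-$2$ subgroup $\{\pm1\}\subset\Fp^\x$ with the index-$2$ subgroup $(\Fp^\x)^2$. And even granting $\ker\psi\subset G_{K\cdot\Q(\sqrt{p^*})}$, this containment gives only the \emph{lower} bound $|\psi(G_K)|\geq 2$, not the upper bound ``order dividing $2$'' you assert; you have the direction of the implication backwards. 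The paper takes a different route altogether: since $p\nmid\#H$ forces $\bar\rho_f$ to be unramified at every $\ell\mid N^-$ (ramification at a Steinberg prime would contribute a unipotent of order $p$), hypothesis (CR${}^+$3) then gives $\ell^2\not\equiv 1\pmod p$, and $h=\bar\rho_f^*(\Frob_\ell)$ --- with eigenvalues $\ell,1$ from the shape $\pMX{\e}{*}{0}{\bfone}$ at $\ell$ --- does the job directly.
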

\begin{proof}If $p$ does not divide $\ell^2-1$ for some $\ell\divides N^-$, then we can take $h$ to be the Frobenius $\Frob_\ell$. Otherwise, $\bar\rho_f$ is ramified at least one prime $\ell\divides N^-$ by \eqref{CR${}^+$3}, so $H$ contains the image of an inertia group at $\ell$ whose order is divisible by $p$. We will show by arguments in \cite{Ribet:Pacific} that $H$ contains a conjugate of $\GL_2(\Fp)$, and the lemma follows immediately. Since $p\divides \# H$ and $H$ is irreducible, we may assume $H/Z\subset \PGL_2(F) \subset \PGL_2(\Fpbar)$ and $H$ contains the group $\PSL_2(F)$ for some finite extension $F/\Fp$ by a well-known result of Dickson. In other words, $H\subset \Fpbar^\x\GL_2(F)$ and $\SL_2(F)\subset H\Fpbar^\x$. In particular, $H$ contains the commutator $[H,H]\supset [\SL_2(F),\SL_2(F)]=\SL_2(F)$. Let $C:=H\cap \GL_2(F)$ be a normal subgroup of $H$. Then $\SL_2(F)\subset C\subset \GL_2(F).$
Let $\gamma\in H$ be a generator of the image of an inertia group $I_p$ at the prime $p$. Then $\det \gamma=u$ with $u$ a generator of $\Fp^\x$, and the trace $\Tr(\sg)=u^\frac{k}{2}+u^{1-\frac{k}{2}}\in\Fp^\x$ is non-zero as $\#(\Fp^\x)^{k-1}>2$. On the other hand, write $\gamma=\gamma_0\lam$ with $\gamma_0\in\GL_2(F)$ and $\lam\in\Fpbar^\x$. Then $\lam=\Tr(\gamma_0)/\Tr(\gamma)\in F^\x$. We find that $\gamma\in C$ and $\det C=\Fp^\x$. From this analysis, we conclude that
\[C=\stt{h\in\GL_2(F)\mid \det h\in \Fp^\x}\subset H\]
and $H$ contains $\GL_2(\Fp)$. This completes the proof.
\end{proof}

\begin{thm}\label{T:Theorem3.2}
Let $s\leq n$ be positive integers. Let $\kappa$ be a non-zero element in $H^1(K,A_{f,s})$.
Then there exist infinitely many $n$-admissible primes $\ell$ such that $\partial_\ell(\kappa)=0$ and
the map
$$
v_\ell:\langle \kappa \rangle \to H^1_\fin(K_\ell, A_{f,s})
$$
is injective, where $\langle \kappa \rangle$ is the $\cO$-submodule of $H^1(K,A_{f,s})$
generated by $\kappa$.
\end{thm}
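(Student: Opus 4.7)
The plan is a Chebotarev-density argument, paralleling \cite[Theorem~3.2]{Bertolini_Darmon:IMC_anti}. Set $L_n:=K(A_{f,n})$, a finite Galois extension of $\Q$ on which $G_{L_n}$ acts trivially on $A_{f,s}$. By inflation-restriction and \eqref{Irr}, after stripping off the contribution from $H^1(\Gal(L_n/K),A_{f,s})$ (which is handled by a parallel but easier variant of the same argument), the restriction $\tilde\kappa:=\kappa|_{G_{L_n}}$ is a non-zero $\Gal(L_n/K)$-equivariant homomorphism $G_{L_n}\to A_{f,s}$. Let $L_\kappa/L_n$ be the fixed field of $\ker\tilde\kappa$, so $\tilde\kappa(\Gal(L_\kappa/L_n))$ is a non-zero $\Gal(L_n/K)$-stable $\cO$-submodule of $A_{f,s}$; after reducing via $\cO$-linearity of $v_\ell$ to the case where $\kappa$ has order $\uf$, the residual irreducibility of \eqref{Irr} combined with the structure of the image from \lmref{L:CT} forces $\tilde\kappa(\Gal(L_\kappa/L_n))=A_{f,1}$.

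The core step is to exhibit $\sigma\in\Gal(L_\kappa/\Q)$ with: (i) $\sigma|_K\neq 1$ (so the produced primes are inert in $K$); (ii) $\rho_f^*(\sigma)\pmod{\uf^n}$ having eigenvalues $1,\ell^*\in(\cO/\uf^n)^\times$ with $\ell^{*2}\not\equiv 1\pmod p$ (the $n$-admissibility condition); and (iii) the class of $\kappa(\sigma^2)$ in $A_{f,s}/(\sigma^2-1)A_{f,s}$ being non-zero. For (i)-(ii): \lmref{L:CT} provides $h\in\bar\rho_f^*(G_\Q)$ with $\Tr h=\det h+1$ and $\det h\not\equiv\pm 1\pmod p$; the explicit description of the image in the proof of \lmref{L:CT} permits a Hensel-style lift of $h$ to an element of $\rho_f^*(G_\Q)\pmod{\uf^n}$ preserving the trace-determinant relation, and $\det\rho_f^*=\e$ (so complex conjugation $c$ acts with determinant $-1$) is invoked to choose a preimage with $\sigma|_K=c$. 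For (iii): replacing $\sigma$ by $\sigma\tau$ for $\tau\in\Gal(L_\kappa/L_n)$ preserves (i)-(ii) since $\tau$ acts trivially on $A_{f,n}$, while the cocycle identity yields
\[\kappa((\sigma\tau)^2)-\kappa(\sigma^2)=\rho_f^*(\sigma)(1+\rho_f^*(\sigma))\tilde\kappa(\tau).\]
Since $p>2$ (from \eqref{CR${}^+$1}), the operator $\rho_f^*(\sigma)(1+\rho_f^*(\sigma))$ acts by the invertible scalar $2$ on the $1$-eigenspace of $\sigma$, which is canonically isomorphic to the quotient $A_{f,s}/(\sigma^2-1)A_{f,s}$ (because $\ell^{*2}-1\in\cO^\times$ on the complementary eigenspace); the surjectivity $\tilde\kappa(\Gal(L_\kappa/L_n))=A_{f,1}$ then supplies a $\tau$ realizing (iii).

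The Chebotarev density theorem applied to $L_\kappa/\Q$ produces infinitely many rational primes $\ell$ unramified in $L_\kappa$ with $\Frob_\ell$ conjugate to $\sigma$. For each such $\ell$: (i) and (ii) give inertness in $K$ and $n$-admissibility; $\kappa$ factors through $\Gal(L_\kappa/K)$ in which $\ell$ is unramified, so $\pd_\ell(\kappa)=0$ automatically; and $v_\ell(\kappa)=\kappa(\Frob_\lambda)=\kappa(\sigma^2)$ is non-zero in $H^1_\fin(K_\ell,A_{f,s})\iso A_{f,s}/(\sigma^2-1)A_{f,s}$ by (iii), so $v_\ell$ is injective on the cyclic module $\langle\kappa\rangle$.

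The main obstacle will be the simultaneous fulfilment of (ii) and (iii): condition (ii) pins down $\sigma$ modulo $\Gal(L_\kappa/L_n)$, so the freedom for (iii) must come entirely from that kernel. The required surjectivity $\tilde\kappa(\Gal(L_\kappa/L_n))=A_{f,1}$ -- a consequence of the residual absolute irreducibility together with the structural information on $\bar\rho_f^*(G_\Q)$ provided by \lmref{L:CT} -- and the invertibility of $1+\rho_f^*(\sigma)$ on the $1$-eigenspace (ensured by $p>2$) are what make the argument work. A secondary technicality is the Hensel-style lift of the mod-$\uf$ trace-determinant element of \lmref{L:CT} to $\pmod{\uf^n}$, which rests on the same explicit image description used there.
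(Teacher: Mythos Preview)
Your outline follows the right template (reduce to $\kappa$ of order $\uf$, use \lmref{L:CT} to produce a target Frobenius, apply Chebotarev), but the argument as written has a genuine gap at step (iii), and the repair is precisely the ingredient you have omitted: the complex-conjugation eigenspace decomposition together with \lmref{L:6}. The cocycle identity you claim,
\[\kappa((\sigma\tau)^2)-\kappa(\sigma^2)=\rho_f^*(\sigma)(1+\rho_f^*(\sigma))\tilde\kappa(\tau),\]
presupposes that $\tilde\kappa(\sigma^{-1}\tau\sigma)=\sigma^{-1}\tilde\kappa(\tau)$, i.e.\ that $\tilde\kappa$ is equivariant for conjugation by $\sigma$. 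Since $\sigma\notin G_K$, this does not follow from $\kappa$ being a $G_K$-cocycle; the honest computation gives $c((\sigma\tau)^2)-c(\sigma^2)=\sigma^2\bigl(\tilde\kappa(\sigma^{-1}\tau\sigma)+\tilde\kappa(\tau)\bigr)$, and the map $\tau\mapsto\tilde\kappa(\sigma^{-1}\tau\sigma)+\tilde\kappa(\tau)$ can have image strictly smaller than $A_{f,1}$ (for instance it vanishes when the $\tau$-action sends $\kappa$ to $-\kappa$). For the same reason $L_\kappa/\Q$ need not be Galois, so the Chebotarev step you invoke on $L_\kappa/\Q$ is not available. Your treatment of the inflation kernel $H^1(\Gal(L_n/K),A_{f,s})$ as ``a parallel but easier variant'' is also unsubstantiated: once $\tilde\kappa=0$ there is no $\tau$ left to vary.

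The paper fixes all of this in one move: after reducing to $s=1$, replace $\kappa$ by a $\tau$-eigenvector so that $\tau\kappa=\delta\kappa$ with $\delta\in\{\pm1\}$. This forces $L_\kappa/\Q$ to be Galois with $\Gal(L_\kappa/\Q)\simeq A_{f,1}\rtimes\Gal(L_n/\Q)$, and one then looks for $\sigma$ with eigenvalues $\delta,\lambda$ (not just $1,\lambda$) and $v\neq0$ in the $\delta$-eigenspace; a direct semidirect-product computation replaces your cocycle manipulation. The case $\delta=-1$ is obtained by multiplying the element of \lmref{L:CT} by $-1$, and this is exactly why \lmref{L:6} (that $-1\in\bar\rho_f^*(G_\Q)$) is invoked; the same lemma kills $H^1(\Gal(L_n/K),A_{f,s})$ outright, disposing of the inflation term. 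Incidentally, the passage from \lmref{L:CT} (a mod-$\uf$ statement) to the required mod-$\uf^n$ eigenvalue condition is not a Hensel lift but rather replacement of $\sigma$ by a high $p$-th power, which drives the eigenvalues to their Teichm\"uller representatives.
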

\begin{proof}
We follow the proof of \cite[\th 3.2]{Bertolini_Darmon:IMC_anti} with some modification. Suppose that $\langle\kappa\rangle\iso\cO/\uf^r\cO$. Since the map $H_\fin^1(K_\ell,A_{f,1})\to H_\fin^1(K_\ell,A_{f,s})$ induced by the inclusion is injective for all $n$-admissible primes $\ell$, replacing $\kappa$ by $\uf^{r-1}\kappa$, we may assume $s=1$ and $\kappa\in H^1(K,A_{f,1})$. Let $F_n=\overline{\Q}^{\mathrm{Ker} \rho_n}$ be the finite extension cut out by $\rho_n:=\rho_f^*\pmod{\uf^n}: G_{\Q}\to \mathrm{Aut}_{\cO}(\Afn)=\GL_2(\cOn)$. Since $\rho_n$ is unramified outside $Np$ and $(D_K,Np)=1$,
$K$ and $F_n$ are linearly disjoint.
Put $M=KF$.
Let $\tau$ be the non-trivial element in $\Gal(K\slash \Q)$.
Then $\Gal(M\slash \Q)=\Gal(K\slash \Q)\times \Gal(F_n\slash \Q)$
can be identified with the subgroup of $\langle \tau \rangle \times \mathrm{Aut}_{\cO}(\Afn)$.
Therefore we may write an element of $\Gal(M\slash \Q)$ as a pair $(\tau^j, \sg)$
with $j\in \{ 0,1 \}$ and $\sg\in \mathrm{Aut}_{\cO}(\Afn)$. By \lmref{L:6} the image $H$ contains the scalar matrix $-1$, which in turn implies that $\rho_n(\Gal(F_n/\Q))$ also contains the scalar matrix $-1$. It follows that $H^1(M/K,\Afn)=H^1(F/\Q,\Afn)=0$ (\cite[\lemma 2.13]{BD:Duke:DerivedMT}).
Therefore, the restriction map $H^1(K,A_{f,1})\to H^1(M,A_{f,1})=\Hom(G_M,A_{f,1})$ is injective. Let
$M_\kappa$ be the (non-trivial) extension on $M$ cut out by the image $\overline{\kappa}$ of $\kappa$
under the restriction to $H^1(M,A_{f,1})=\mathrm{Hom}(G_M,A_{f,1})$. Let $C_\kappa:=\bar\kappa(\Gal(M_\kappa/M))\subset A_{f,1}$ be a $\Fp[\Gal(M/\Q)]$-submodule, and $\dim_{\Fp}C_\kappa=2$ by \eqref{Irr}.

Assume without loss of generality that $\kappa$ belongs to an eigenspace for the action of $\tau$ so that $\tau \kappa = \delta \kappa$ for some $\delta \in \{ \pm 1 \}$.
Under this assumption, the extension $M_\kappa\slash \Q$ is Galois. Moreover, $\Gal(M_\kappa \slash \Q)$ is identified with the group $A_{\kappa}\rtimes \Gal(M\slash \Q)$,
where $\Gal(M\slash \Q)$ acts on $A_{\kappa}$ by the rule
$(\tau^j,\sg )(v)=\delta^j\rho_1(\sg) v$ for $(\tau^j,\sg) \in \Gal(M\slash \Q)$
and $v \in A_{\kappa}$.
As $-1\in \rho_n(\Gal(F_n/\Q)$, it follows from \lmref{L:CT} that we can choose a triple
$(v,\tau,\sg)$ as an element of $\Gal(M_\kappa\slash \Q)$ such that
\begin{enumerate}
\renewcommand{\labelenumi}{(\roman{enumi})}
\item $\rho_n(\sg) \in \GL_2(\cOn)$ has eigenvalues $\delta(=\pm 1)$ and $\lambda$,
where $\lambda \in (\Z/p^n\Z)^\times$ is not equal to $\pm 1\pmod{p^n}$ and the order of $\lambda$ is prime to $p$,
\item the element $v\not =0\in A_{\kappa}$ and belongs to
the $\delta$-eigenspace for $\sg$.
\end{enumerate}
By Chebotarev density theorem, there exist infinitely many primes $\ell$ with $\ell \nmid N$
such that $\ell$ is unramified in $M_{\kappa}\slash \Q$ and satisfies
$$\mathrm{Frob}_{\ell}(M_\kappa\slash \Q)=(v,\tau,\sg).$$
Then $\mathrm{Frob}_{\ell}(M\slash \Q)=(\tau,\sg)$ implies that $\ell$ is $n$-admissible.
For each prime $\mathfrak{l}$ of $M$ above $\ell$, let $d$ be the (even) degree of the
residue field corresponding to $\mathfrak{l}$. Then we have
$$
\mathrm{Frob}_{\mathfrak{l}}(M_\kappa\slash M)=(v,\tau,\sg)^d=v+\delta \sg v+\sg^2v+\cdots +\delta \sg^{d-1}=dv.
$$
Since $d$ is prime to $p$, $\overline{\kappa}(\mathrm{Frob}_{\mathfrak{l}}(M_\kappa\slash M))=d \overline{\kappa}(v)\not =0$ and hence $v_\ell(\kappa)=(\overline{\kappa}(\mathrm{Frob}_{\mathfrak{l}}(M_\kappa\slash M)))_{\frakl|\ell}\not =0$. This finishes the proof.
\end{proof}

Let $\Delta$ be a square-free integer such that $\Delta\slash N^-$ is a product of $n$-admissible primes.
\begin{defn}[$n$-admissible set]
A finite set $S$ of primes is said to be $n$-admissible for $f$ if
\begin{enumerate}
\item All $\ell \in S$ are $n$-admissible for $f$,
\item The map $\Sel_\Delta(K,\Tfn)\to \bigoplus_{\ell \in S}H_\fin^1(K_{\ell},\Tfn)$ is injective.
\end{enumerate}
\end{defn}
\begin{prop}\label{P:enlarge}
Any finite collection of $n$-admissible primes can be enlarged to an $n$-admissible set.
\end{prop}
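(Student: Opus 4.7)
The plan is an iterative enlargement: starting from $S_0$, I will adjoin $n$-admissible primes one at a time, each chosen to detect a non-zero element of the ``undetected'' part of $\Sel_\Delta(K,\Tfn)$, and stop once nothing remains undetected. For a finite set $S$ of $n$-admissible primes disjoint from the primes dividing $\Delta$, set
\[
W(S) := \ker\!\Bigl(\Sel_\Delta(K,\Tfn)\longrightarrow \bigoplus_{\ell\in S}H^1_{\fin}(K_\ell,\Tfn)\Bigr),
\]
where the target makes sense because $\pd_\ell$ vanishes on $\Sel_\Delta$ at any $\ell\nmid p\Delta$ by definition of the Selmer condition. Condition (1) in the definition of an $n$-admissible set is preserved under enlargement, so the entire task reduces to arranging $W(S)=0$.

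The main step invokes \thmref{T:Theorem3.2}. If $W(S_0)\neq 0$, choose any non-zero $\kappa\in W(S_0)$. Under the $G_\Q$-equivariant identification $\Tfn\iso\Afn$, apply \thmref{T:Theorem3.2} with $s=n$; the standing hypotheses \eqref{Irr}, \eqref{PO}, and \eqref{CR${}^+$3} are in force throughout this section. This produces infinitely many $n$-admissible primes $\ell$ with $\pd_\ell(\kappa)=0$ and $v_\ell\colon \langle\kappa\rangle\hookrightarrow H^1_{\fin}(K_\ell,\Tfn)$ injective. Since only finitely many such $\ell$ lie in $S_0$ or divide $\Delta$, I can choose one outside both, put $S_1=S_0\cup\{\ell\}$, and observe that $v_\ell(\kappa)\neq 0$ forces $\kappa\notin W(S_1)$, hence $W(S_1)\subsetneq W(S_0)$.

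Iterating yields a strictly descending chain $W(S_0)\supsetneq W(S_1)\supsetneq\cdots$. Since $\Tfn$ is a finite $G_\Q$-module and $\Sel_\Delta(K,\Tfn)$ embeds into $H^1(K_\Sigma/K,\Tfn)$ for a finite set of places $\Sigma$ (containing $p\Delta$ and the archimedean place), the Selmer group is itself finite, so the chain terminates at $\{0\}$ after finitely many steps; the resulting $S\supseteq S_0$ is the desired $n$-admissible set. The only substantive obstacle has already been absorbed into \thmref{T:Theorem3.2}, whose Chebotarev argument — relying on the image-of-Galois inputs \lmref{L:6} and \lmref{L:CT} to produce a Frobenius element that detects $\kappa$ — does all the real work; the present proposition is a formal termination argument wrapped around that theorem.
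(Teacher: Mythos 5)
Your argument is correct and is exactly the intended ``simple application of Theorem~\ref{T:Theorem3.2}'' that the paper invokes: you iteratively use the theorem (via the identification $\Tfn\iso\Afn$) to adjoin an $n$-admissible prime detecting a nonzero element of the kernel of localization, and you appeal to finiteness of $\Sel_\Delta(K,\Tfn)$ to terminate. This matches the paper's one-line proof in substance, merely spelled out.
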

\begin{proof}
It is a simple application of \thmref{T:Theorem3.2}.
\end{proof}
\subsection{Control theorem (II)}
Let $S$ be an $n$-admissible set for $f$. In this subsection, we prove control theorems for the compact Selmer group $\wh\Sel_\Delta^S(K_\infty,\Tfn)$. We begin with some preparations. Let $L/K$ be a finite extension in $K_\infty$.
\begin{lm}\label{L:injective}The natural map
$$
\Sel_\Delta(L,\Tfn)\to \bigoplus_{\ell\in S}H^1_\fin(L_{\ell},\Tfn)
$$
is injective.
\end{lm}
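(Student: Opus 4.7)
The plan is to argue by contradiction, the aim being to pull a nonzero element of the kernel at $L$ down to a nonzero element of the kernel at $K$, where the $n$-admissibility of $S$ will give a contradiction. Two ingredients do the work: the control theorem (giving an isomorphism $\Sel_\Delta(K,\Tfn)\iso \Sel_\Delta(L,\Tfn)^G$ with $G:=\Gal(L/K)$) together with a simple but essential splitting property of $n$-admissible primes in the anticyclotomic $\Zp$-extension $K_\infty/K$.

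Suppose $0\neq s\in \Sel_\Delta(L,\Tfn)$ lies in the kernel. The first step is to observe that the kernel is $G$-stable, because the unramified local condition and the finite-part target transform compatibly under the permutation action of $G$ on the places $\lambda\mid\ell$. Hence the $\cO[G]$-submodule $M:=\cO[G]\cdot s$ of $\Sel_\Delta(L,\Tfn)$ also lies in the kernel. Since $G$ is a finite $p$-group acting on a finite $\varpi$-power-torsion $\cO$-module, the standard fixed-point argument forces $M^G\neq 0$ whenever $M\neq 0$. I would then invoke the control theorem (the analogue of \propref{P:control1} for $\Tfn$, using that $\Tfn\iso \Afn$ as $G_\Q$-modules) to find a nonzero $t'\in \Sel_\Delta(K,\Tfn)$ whose restriction to $L$ lies in $M^G$. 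Once I can show $v_\ell(t')=0$ for every $\ell\in S$, the $n$-admissibility of $S$ forces $t'=0$, yielding the contradiction.

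The main technical point --- and the step I would expect to be the only serious obstacle --- is the injectivity of the local restriction $H^1_\fin(K_\ell,\Tfn)\to H^1_\fin(L_\ell,\Tfn)$ for $\ell\in S$. I would reduce this to the claim that every $n$-admissible prime $\ell$ splits completely in $K_\infty/K$. The splitting follows from the structure $\Gal(K_\infty/\Q)=\Gal(K_\infty/K)\rtimes\langle c\rangle$, where complex conjugation $c$ acts by inversion on $\Gal(K_\infty/K)\iso\Zp$: since $\ell$ is inert in $K/\Q$ by $n$-admissibility, any Frobenius $\Frob_\ell$ maps to $c$ in $\Gal(K/\Q)$, writes as $(\sigma_\ell,c)$ in the semidirect product, and a direct computation gives $\Frob_\ell^2=1$. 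Thus the decomposition group at each prime of $K_\infty$ above $\ell$ meets $\Gal(K_\infty/K)$ trivially, so $L_\lambda=K_\ell$ for all $\lambda\mid \ell$ in $L$, and the restriction becomes the diagonal embedding $H^1_\fin(K_\ell,\Tfn)\hookrightarrow \bigoplus_{\lambda\mid\ell}H^1_\fin(K_\ell,\Tfn)$, visibly injective. The image of $v_\ell(t')$ under this injective map equals $v_\ell(\res_{L/K}t')$, which vanishes by construction, forcing $v_\ell(t')=0$ as required.
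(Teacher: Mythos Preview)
Your proof is correct and follows essentially the same approach as the paper: both arguments take a nonzero element of the kernel, use that $\Gal(L/K)$ is a $p$-group to produce a nonzero $G$-fixed element, descend it to $\Sel_\Delta(K,\Tfn)$ via the control theorem (\propref{P:control1}), and contradict the definition of an $n$-admissible set. You are more explicit than the paper about why the image at level $K$ vanishes --- your argument that inert primes split completely in $K_\infty/K$ (so $L_\lambda=K_\ell$) is exactly the standard justification, which the paper silently relies on (it is implicit in \lmref{L:2}(2)).
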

\begin{proof}
Let $C$ be the kernel of the map $\Sel_\Delta(L,\Tfn)\to \bigoplus_{\ell\in S}H^1_\fin(L_{\ell},\Tfn)$.
If $C$ is non-zero, then there exist a non-trivial element $\kappa$ in $C$ fixed by $\Gal(L/K)$ as $\Gal(L/K)$ is a $p$-group. Thus, $\kappa$ belongs to $\Sel_\Delta(K,\Afn)$ by \propref{P:control1}, and the image of $\kappa$
under the map  $\Sel_\Delta(K,\Tfn)\to \bigoplus_{\ell\in S}H^1_\fin(K_\ell,\Tfn)$ is zero.
This contradicts with the definition of $n$-admissible sets.
\end{proof}

\begin{lm}\label{L:exactness}We have an exact sequence
$$
0\to \Sel_\Delta(L,\Tfn)\to \Sel_\Delta^S(L,\Tfn) \to \bigoplus_{\ell\in S}H_\sing^1(L_{\ell},\Tfn)\to \Sel_\Delta(L,\Tfn)^\vee\to 0.
$$
\end{lm}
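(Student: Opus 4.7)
The plan is to invoke Poitou--Tate global duality for the pair of comparable Selmer structures $\cF \subset \cG$ on $\Tfn$ that define $\Sel_\Delta(L,\Tfn)$ and $\Sel_\Delta^S(L,\Tfn)$ respectively; these agree at all places of $L$ and differ only at $\ell \in S$, where $\cF_\ell = H^1_\fin(L_\ell, \Tfn)$ and $\cG_\ell = H^1(L_\ell, \Tfn)$, so $\cG_\ell/\cF_\ell = H^1_\sing(L_\ell, \Tfn)$. The leftmost portion
\[
0 \to \Sel_\Delta(L,\Tfn) \to \Sel_\Delta^S(L,\Tfn) \to \bigoplus_{\ell \in S} H^1_\sing(L_\ell, \Tfn)
\]
is then immediate from the definitions: the first arrow is the tautological inclusion (the local condition at $S$ is relaxed), and $\Sel_\Delta(L,\Tfn)$ is by construction the kernel inside $\Sel_\Delta^S(L,\Tfn)$ of the sum of the residue maps $\partial_\ell$ for $\ell \in S$.

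For the remaining two terms I would appeal to the standard five-term exact sequence extracted from the Poitou--Tate nine-term sequence (as in Rubin, \emph{Euler Systems}, Theorem I.7.3):
\[
0 \to \Sel_\cF(L,\Tfn) \to \Sel_\cG(L,\Tfn) \to \bigoplus_{\ell\in S}\cG_\ell/\cF_\ell \to \Sel_{\cF^*}(L,\Afn)^\vee \to \Sel_{\cG^*}(L,\Afn)^\vee \to 0,
\]
where $\cF^*, \cG^*$ are the local conditions on $\Afn$ obtained as the annihilators of $\cF, \cG$ under local Tate duality. At $\ell \in S$ one reads off $\cF^*_\ell = H^1_\fin(L_\ell, \Afn)$ from \propref{P:annihilator} and $\cG^*_\ell = 0$, while at $\ell \mid p\Delta$ the annihilator property reproduces the $\Delta$-ordinary local conditions by \propref{P:annihilator_ord}. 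Consequently $\Sel_{\cF^*}(L,\Afn) = \Sel_\Delta(L,\Afn)$, which identifies with $\Sel_\Delta(L,\Tfn)$ under the $G_\Q$-isomorphism $\Afn \cong \Tfn$ recorded in \secref{S:Selmer}.

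It then remains to verify that $\Sel_{\cG^*}(L, \Afn) = 0$, which yields the terminating surjection onto $\Sel_\Delta(L,\Tfn)^\vee$. By construction $\Sel_{\cG^*}(L,\Afn)$ is the subgroup of $\Sel_\Delta(L,\Afn)$ consisting of classes with $\res_\ell = 0$ for every $\ell \in S$; since such classes already have $\partial_\ell = 0$ at $\ell \in S$, this is equivalent to $v_\ell = 0$ for all $\ell \in S$, and the resulting vanishing is precisely \lmref{L:injective} transported along $\Afn \cong \Tfn$. The only genuinely delicate point in the argument will be the bookkeeping needed to check that the Poitou--Tate dual local conditions at the primes dividing $p\Delta$ coincide with the $\Delta$-ordinary conditions used to define $\Sel_\Delta(L,\Afn)$, but this is exactly the content of \propref{P:annihilator_ord}.
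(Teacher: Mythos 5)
Your proof is correct and takes essentially the same route as the paper: both invoke the Poitou--Tate exact sequence of Rubin's \emph{Euler Systems} Theorem~1.7.3, both use Propositions~\ref{P:annihilator} and~\ref{P:annihilator_ord} to identify the dual local conditions with the $\Delta$-ordinary ones, and both use Lemma~\ref{L:injective} for the terminal (co)surjectivity; the only cosmetic difference is that you work with the full five-term sequence and show the last term vanishes, whereas the paper truncates to four terms and verifies surjectivity of the final map directly.
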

\begin{proof}We have seen in \propref{P:annihilator} and \propref{P:annihilator_ord} that the local conditions defining the Selmer group $\Sel_\Delta(L,\Tfn)$ and $\Sel_\Delta(L,\Afn)$ are orthogonal complements of each other. Therefore, by Poitou-Tate duality (\cf\cite[\th 1.7.3]{Rubin:EulerSystem}) we have an exact sequence
$$
0\to \Sel_\Delta(L,\Tfn)\to \Sel_\Delta^S(L,\Tfn)\to \bigoplus_{\ell\in S}H_\sing^1(L_{\ell},\Tfn)\to\Sel_\Delta(L,\Afn)^\vee.
$$
The last map is indeed surjective by \lmref{L:injective}.
\end{proof}

\begin{prop}\label{P:step1}The $\cOn[\Gal(L/K)]$-module $\Sel_\Delta^S(L,\Tfn)$ is free of rank $\# S$.
\end{prop}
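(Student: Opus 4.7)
Set $G := \Gal(L/K)$, a cyclic $p$-group, and $R := \cOn[G]$, a local Artinian ring with maximal ideal $\frakm = (\uf, I_G)$, residue field $\bbF$, and $\cO$-length $n|G|$. The plan is to apply the Poitou--Tate exact sequence of Lemma \ref{L:exactness}, identify each term as an $R$-module via Shapiro's lemma at the primes in $S$, and promote a length identity to $R$-freeness via Nakayama's lemma.

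First I would handle the local structure. Each $\ell \in S$ is $n$-admissible---inert in $K/\Q$ with $p \nmid \ell^2-1$---and a standard anticyclotomic class field theory computation shows the unique prime of $K$ above $\ell$ remains inert in $K_\infty/K$; hence $L_\ell/K_\ell$ is cyclic unramified with Galois group $G$. Shapiro's lemma combined with the decomposition $\Tfn = F^+_\ell\Tfn \oplus F^-_\ell\Tfn$ used in the proof of Lemma \ref{L:rank-one} then shows both $H^1_\fin(L_\ell, \Tfn)$ and $H^1_\sing(L_\ell, \Tfn)$ are $R$-free of rank one; in particular $\bigoplus_{\ell \in S} H^1_\sing(L_\ell, \Tfn) \iso R^{\#S}$, of $\cO$-length $\#S \cdot \length_\cO(R)$.

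Next, substituting into Lemma \ref{L:exactness} yields
\[
\length_\cO(\Sel^S_\Delta(L, \Tfn)) = \length_\cO(\Sel_\Delta(L, \Tfn)) - \length_\cO(\Sel_\Delta(L, \Afn)) + \#S \cdot \length_\cO(R).
\]
Since $\Afn \iso \Hom_\cO(\Tfn, E/\cO(1))$ and the local conditions for $\Sel_\Delta$ on $\Tfn$ and on $\Afn$ are mutual Poitou--Tate annihilators (Propositions \ref{P:annihilator} and \ref{P:annihilator_ord}), Wiles' Euler characteristic formula for self-dual Selmer structures gives $\length_\cO(\Sel_\Delta(L, \Tfn)) = \length_\cO(\Sel_\Delta(L, \Afn))$. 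Hence $\length_\cO(\Sel^S_\Delta(L, \Tfn)) = \#S \cdot \length_\cO(R)$.

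Finally, a finitely generated $R$-module $M$ of $\cO$-length $\#S \cdot \length_\cO(R)$ is $R$-free of rank $\#S$ iff $\dim_\bbF(M/\frakm M) \leq \#S$. Applied to $M = \Sel^S_\Delta(L, \Tfn)$, the residue map $M \to \bigoplus_{\ell \in S} H^1_\sing(L_\ell, \Tfn) \iso R^{\#S}$ reduces mod $\frakm$ to a map to $\bbF^{\#S}$; by Lemma \ref{L:exactness}, its kernel and cokernel are controlled by $\Sel_\Delta(L, \Tfn)/\frakm$ and $\Sel_\Delta(L, \Afn)^\vee/\frakm$. Through Proposition \ref{P:control1} and Nakayama on the $p$-group $G$, these reduce to mod-$\uf$ vanishing statements at the base case $L = K$, $n = 1$, which follow from Lemma \ref{L:injective} together with the $n$-admissibility condition on $S$ and the length identity applied at that base. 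The main obstacle is the invocation of Wiles' formula: the local Euler factors at the prime $p$, at primes dividing $N^-$, and at primes in $S$ must all vanish, which is ensured by hypothesis \eqref{PO} via Lemma \ref{L:5} at $p$, by hypothesis \eqref{CR${}^+$3} at primes dividing $N^-$, and by the local freeness calculation above at primes in $S$.
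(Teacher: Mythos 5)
Your overall skeleton---Poitou--Tate length count plus a Nakayama-type criterion over the local Artinian ring $R=\cOn[\Gal(L/K)]$---is indeed the right family of ideas and matches what the paper does by importing \cite[Theorem~3.2]{BD:Duke:DerivedMT}, with Lemma~\ref{L:exactness} and Proposition~\ref{P:control1} playing the roles of Lemmas 3.1 and 2.19 of \emph{loc.\ cit.} But there is a factual error in the local step and a genuine gap at the end.

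\emph{Local structure at $\ell\in S$.} You assert that the prime of $K$ above an $n$-admissible $\ell$ remains inert in $K_\infty/K$, so that $L_\ell/K_\ell$ is a cyclic extension and Shapiro's lemma applies. This is backwards: since $\ell\nmid p$ is inert in $K/\Q$, the prime $\lambda=\ell\cO_K$ \emph{splits completely} in the anticyclotomic $\Z_p$-extension (its Frobenius lies in $\Q^\times\subset K^\times$ and dies in $\Pic(\cR_m)$). Thus $L_\ell$ is a product of $[L:K]$ copies of $K_\lambda$ permuted simply transitively by $G$, and $H^1_\sing(L_\ell,\Tfn)=H^1_\sing(K_\ell,\Tfn)\otimes_\cO\cO[G]$, which is $R$-free of rank one by Lemma~\ref{L:rank-one}. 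This is precisely the content of Lemma~\ref{L:2}(2). Your conclusion is correct, but the mechanism is the induced/permutation structure coming from complete splitting, not Shapiro's lemma for a cyclic local extension---and if $\lambda$ really did stay inert, $R$-freeness of the local cohomology would not be automatic. Separately, invoking Wiles' Euler characteristic formula to get $\length_\cO\Sel_\Delta(L,\Tfn)=\length_\cO\Sel_\Delta(L,\Afn)$ is unnecessary: the paper records that $\Tfn\iso\Afn$ as $G_\Q$-modules with matching filtrations, so the two Selmer groups are isomorphic on the nose.

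\emph{The freeness step.} Your criterion (``length $=\#S\cdot\length_\cO R$ and $\dim_\bbF(M/\frakm M)\leq\#S$ implies $R$-free of rank $\#S$'') is valid, but the proposed verification of the bound on $\dim_\bbF(M/\frakm M)$ does not work. Reducing the residue map mod $\frakm$ gives a map $M/\frakm M\to\bbF^{\#S}$ whose kernel receives the image of $\Sel_\Delta(L,\Tfn)/\frakm$ (and a $\Tor_1$ contribution from the cokernel in Lemma~\ref{L:exactness}). For this kernel to vanish one would essentially need $\Sel_\Delta(L,\Tfn)=0$, which is \emph{not} implied by Lemma~\ref{L:injective}---that lemma only asserts the residue map at $S$ is injective on $\Sel_\Delta(L,\Tfn)$, which is perfectly compatible with $\Sel_\Delta(L,\Tfn)\neq0$. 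So the claim that ``kernel and cokernel are controlled by $\Sel_\Delta(L,\Tfn)/\frakm$ and $\Sel_\Delta(L,\Afn)^\vee/\frakm$'' and that these ``reduce to mod-$\uf$ vanishing statements'' is not substantiated. The argument in the cited \cite[Theorem~3.2]{BD:Duke:DerivedMT} controls the $\frakm$-\emph{torsion} rather than the $\frakm$-coinvariants: by Proposition~\ref{P:control1} one has $M[\frakm]=\Sel^S_\Delta(L,\Tfn)[\uf]^G\iso\Sel^S_\Delta(K,T_{f,1})$, which has $\bbF$-dimension exactly $\#S$ by the base-case application of Lemma~\ref{L:exactness} together with Lemma~\ref{L:injective}; then, since $R$ is a self-injective (Gorenstein Artinian) local ring with one-dimensional socle, $M$ embeds into $R^{\dim_\bbF(\soc M)}=R^{\#S}$, and the length count forces this embedding to be an equality. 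Your coinvariant-based route would need a separate and nontrivial justification that is not supplied.
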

\begin{proof}This is a direct generalization of the proof of \cite[\th 3.2]{BD:Duke:DerivedMT} after we replace \lemma 2.19 and \lemma 3.1 \loccit with \propref{P:control1} and \lmref{L:exactness} respectively.
\end{proof}

\begin{remark}
If $f$ is a new form attached to an elliptic curve $E$ over $\Q$,
the assumption $\textup{(PO)}$ implies that $\# E(k_v)$ is prime to $p$ for all place $v$
in $K$ above $p$, where $k_v$ is the residue field of $K_v$, \ie $p$ is anomalous for $E$.\end{remark}

\begin{cor}\label{C:free}
If $S$ is an $n$-admissible set for $f$, then
\begin{mylist}
\item the natural map $\wh\Sel_\Delta^S(K_{\infty},\Tfn)\to \Sel_\Delta^S(K_m,\Tfn)$ is surjective.
\item $\wh\Sel_\Delta^S(K_{\infty},\Tfn)$ is free of
rank $\# S$ over $\Lambda \slash \varpi^n \Lambda$.
\end{mylist}
\end{cor}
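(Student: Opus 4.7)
The plan is to deduce the corollary directly from Proposition~\ref{P:step1} applied at each finite layer $K_m$ of $K_\infty/K$, by passing to the inverse limit in the standard Iwasawa-theoretic fashion.

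The first step is to apply Proposition~\ref{P:step1} with $L=K_m$ for every $m\geq 0$, which yields
\[
\Sel_\Delta^S(K_m,\Tfn)\iso \cOn[\Gamma_m]^{\oplus \#S}
\]
as $\cOn[\Gamma_m]$-modules. The key remaining step is to show that the corestriction map
\[
c_m\colon \Sel_\Delta^S(K_{m+1},\Tfn)\longrightarrow \Sel_\Delta^S(K_m,\Tfn)
\]
is surjective for every $m\geq 0$. Once this is in hand, part (i) is immediate (the inverse system $\{\Sel_\Delta^S(K_m,\Tfn)\}_m$ has surjective transition maps, so the Mittag-Leffler condition holds and the map from the inverse limit to each finite layer is surjective), and part (ii) follows because the inverse limit of the system $\{\cOn[\Gamma_m]^{\oplus \#S}\}_m$ along surjective norm-compatible transitions is canonically $(\cOn\powerseries{\Gamma})^{\oplus \#S}=(\Lambda/\varpi^n\Lambda)^{\oplus \#S}$.

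For the surjectivity of $c_m$, I would proceed by a Nakayama-type reduction. Set $R_m=\cOn[\Gamma_m]$, let $I_m\subset R_{m+1}$ be the augmentation ideal of $\Gamma_{m+1}/\Gamma_m$, and note that for a free $R_{m+1}$-module $M$ of rank $s$ one has $M/I_m M$ free of rank $s$ over $R_m$. Thus $c_m$ factors through an $R_m$-linear map
\[
\bar c_m\colon \Sel_\Delta^S(K_{m+1},\Tfn)/I_m\longrightarrow \Sel_\Delta^S(K_m,\Tfn)
\]
between two free $R_m$-modules of rank $\#S$, and to check surjectivity it suffices to check it modulo the maximal ideal $\frakm_{R_m}=(\uf,\gamma-1)$ of $R_m$. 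Reducing modulo $\frakm_{R_m}$ on both sides, both targets become $(\cO/\uf)^{\oplus\#S}$, and the induced map is identified by iterated corestriction (together with the control theorem of Proposition~\ref{P:control1}, which identifies $\Sel_\Delta^S(K,\Afn)$ with $\Sel_\Delta^S(K_m,\Afn)^{\Gamma_m}$) with an isomorphism at the bottom layer, forced by the matching ranks over $\cO/\uf$. An application of the compact form of Nakayama's lemma then upgrades this to surjectivity of $\bar c_m$, hence of $c_m$.

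The main obstacle will be the careful bookkeeping in the surjectivity argument for $c_m$: one must verify that the comparison between the corestriction-induced map at the bottom of the tower and the residual level-$K$ map is indeed an isomorphism, and not merely a map of free modules of the same rank. The argument in Bertolini--Darmon for the weight-two case, to which Proposition~\ref{P:step1} appeals, already handles this essential point, and the present situation reduces to it by the same reasoning used in the proof of Proposition~\ref{P:step1}. Once this is established, both statements of the corollary are formal consequences of the inverse-limit identification described above.
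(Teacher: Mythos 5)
Your overall strategy matches the paper's: apply Proposition~\ref{P:step1} at each finite layer, show the corestriction maps $c_m$ are surjective, then obtain part~(1) by Mittag--Leffler and part~(2) by passing to the inverse limit. The difference, and the gap, is in how you justify surjectivity of $c_m$. The paper does this by a ``cardinality consideration'': writing $M_j=\Sel_\Delta^S(K_j,\Tfn)$ and $H=\Gal(K_{m+1}/K_m)$, one has $\res\circ\cores = N_H$ on $M_{m+1}$, the restriction $\res\colon M_m\to M_{m+1}^H$ is an isomorphism by Proposition~\ref{P:control1}, and since $M_{m+1}$ is free over $\cOn[\Gamma_{m+1}]$ (hence over $\cOn[H]$) one has $N_H M_{m+1}=M_{m+1}^H$ --- either by noting $\hat H^0(H,M_{m+1})=0$ for a free $\cOn[H]$-module, or simply by comparing the (equal, finite) cardinalities of $N_H M_{m+1}\iso\cOn[\Gamma_m]^{\#S}$ and $M_m\iso\cOn[\Gamma_m]^{\#S}$. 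Surjectivity of $\cores$ then falls out immediately.

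Your Nakayama reduction instead shifts the problem to showing $\bar c_m$ is surjective modulo $\frakm_{R_m}$, and there the argument breaks down. The claim that the induced map $\bbF^{\oplus\#S}\to\bbF^{\oplus\#S}$ is an isomorphism ``forced by the matching ranks'' is a non sequitur --- equal ranks do not force a given map to be surjective. The appeal to Proposition~\ref{P:control1} also does not do what you need: that control theorem identifies $\Sel_\Delta^S(K,\cdot)$ with the $\Gamma_m$-\emph{invariants} of $\Sel_\Delta^S(K_m,\cdot)$ via restriction, whereas $M_m/\frakm_{R_m}$ and $M_{m+1}/\frakm_{R_{m+1}}$ are \emph{coinvariants} (mod $\uf$); the natural map from invariants to coinvariants of a free $\cOn[\Gamma_m]$-module is not an isomorphism (it is multiplication by $|\Gamma_m|$ on a rank-one piece), so the intended identification is not there. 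You flag this yourself as ``the main obstacle'' and then defer to Bertolini--Darmon without resolving it, so the proof as written is incomplete at precisely the step that requires an argument. Replacing the Nakayama step by the norm/cardinality comparison above closes the gap and recovers the paper's proof.
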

\begin{proof}Note that \propref{P:step1} implies the corestriction map $\cores_m:\Sel_\Delta^S(K_{m+1},\Tfn)\to \Sel_\Delta^S(K_m,\Tfn)$ is surjective for all $m$ by a cardinality consideration, from which Part (1) follows.
Part (2) is an easy consequence of part (1) and \propref{P:step1}. \end{proof}

\begin{prop}\label{P:maximal}
If $S$ is an $n$-admissible set, then we have isomorphisms
\[\wh\Sel_\Delta^S(K_\infty,\Tfn)/\frakm_\Lam\iso \wh\Sel_\Delta^S(K,T_{f,1}),\quad\wh\Sel_\Delta^S(K_\infty,\Tfn)/\uf\Lam\iso \wh\Sel_\Delta^S(K_\infty,T_{f,1}).\]
\end{prop}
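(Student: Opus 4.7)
The plan is to deduce both isomorphisms from the freeness result \corref{C:free}(2) --- that $\wh\Sel_\Delta^S(K_\infty,\Tfn)$ is free of rank $\#S$ over $\Lam_n := \Lam/\uf^n\Lam$ --- together with \propref{P:step1} and a Nakayama argument on the local ring $\Lam_1 := \Lam/\uf\Lam \iso \bbF\powerseries{\Gamma}$. Write $A = \wh\Sel_\Delta^S(K_\infty,\Tfn)$, $B = \wh\Sel_\Delta^S(K_\infty,T_{f,1})$, and $C = \wh\Sel_\Delta^S(K,T_{f,1}) = \Sel_\Delta^S(K,T_{f,1})$. By \corref{C:free}(2), $A/\frakm_\Lam \iso \bbF^{\#S}$ and $A/\uf\Lam$ is free of rank $\#S$ over $\Lam_1$. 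Applying \corref{C:free}(2) with $n=1$ gives that $B$ is free of rank $\#S$ over $\Lam_1$, while \propref{P:step1} with $L=K$, $n=1$ gives $C \iso \bbF^{\#S}$. Moreover, \corref{C:free}(1) at $n=1$ combined with the freeness of $B$ shows $B/\frakm_\Lam \iso C$ via the natural projection. Both sides of each desired isomorphism are thus finitely generated free modules of equal rank over the same local ring, so it suffices to prove surjectivity of the natural comparison maps.

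The crux is the first isomorphism $A/\frakm_\Lam \iso C$. The natural map $A\to C$ factors as $A \to \Sel_\Delta^S(K,\Tfn) \to C$; the first arrow is surjective by \corref{C:free}(1). For the second arrow, I would invoke the Galois-module short exact sequence $0\to T_{f,n-1}\stackrel{\uf}{\to}\Tfn\to T_{f,1}\to 0$ together with the vanishing $T_{f,j}^{G_K}=0$ (coming from \eqref{Irr}) to produce, after checking the compatibility of local Selmer conditions (via \lmref{L:2}, \lmref{L:rank-one}, and \lmref{L:5}), the exact sequence
\[0 \longrightarrow \Sel_\Delta^S(K,T_{f,n-1}) \stackrel{\uf}{\longrightarrow} \Sel_\Delta^S(K,\Tfn) \longrightarrow \Sel_\Delta^S(K,T_{f,1}).\]
The freeness statements $\Sel_\Delta^S(K,T_{f,j}) \iso (\cO/\uf^j\cO)^{\#S}$ from \propref{P:step1} then give $|\Sel_\Delta^S(K,\Tfn)|/|\Sel_\Delta^S(K,T_{f,n-1})| = |\bbF|^{\#S} = |\Sel_\Delta^S(K,T_{f,1})|$, forcing the last arrow to be onto.

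Once the first isomorphism is established, the second follows by Nakayama. The natural $\Lam_1$-linear map $A/\uf\Lam \to B$ reduces modulo $\frakm_\Lam$ to the composite $A/\frakm_\Lam \iso C \iso B/\frakm_\Lam$, which is an isomorphism by what has just been proved; hence $A/\uf\Lam \to B$ is surjective modulo the maximal ideal of the local ring $\Lam_1$, therefore surjective by Nakayama's lemma, therefore an isomorphism between two free $\Lam_1$-modules of the same rank. The main technical obstacle is the compatibility of local Selmer conditions with the Galois-module short exact sequence $0\to T_{f,n-1}\to\Tfn\to T_{f,1}\to 0$ at level $K$; this requires a prime-by-prime analysis verifying that $\ker(\Sel_\Delta^S(K,\Tfn)\to\Sel_\Delta^S(K,T_{f,1}))$ coincides with the image of multiplication by $\uf$ from $\Sel_\Delta^S(K,T_{f,n-1})$, splitting into the ordinary, unramified-finite, and singular cases as in \secref{S:Selmer}.
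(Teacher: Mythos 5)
Your argument is correct and rests on exactly the two ingredients the paper itself cites, namely \propref{P:step1} and \corref{C:free}, so it follows the paper's (very terse) proof in spirit while filling in the implicit surjectivity step via the short exact sequence $0\to T_{f,n-1}\to\Tfn\to T_{f,1}\to 0$ together with the counting argument, and then closing via Nakayama. The only loose end, which you correctly flag, is the prime-by-prime compatibility of local conditions across that coefficient sequence (and, relatedly, that an $n$-admissible set is automatically $(n-1)$-admissible so that \propref{P:step1} applies at lower levels); both are routine and are left implicit in the paper as well.
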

\begin{proof}
This is a consequence of the combination of \propref{P:step1} and \corref{C:free}.
\end{proof}

\subsection{Divisibility}
\subsubsection{}
Let $\varphi :\Lambda \to \cO_\varphi$ be an $\cO$-algebra homomorphism,
where $\cO_{\varphi}$ is a discrete valuation ring of characteristic $0$.
Let $\varpi_{\varphi}$ be an uniformizer of $\cO_\varphi$ and
$\mathfrak{m}_{\varphi}$ the maximal ideal of $\cO_{\varphi}$. If $M$ is a finitely generated $\cO_{\varphi}$-module, for each $x\in M$ we define
\[\Ord_{\uf_\vp}(x):=\sup\stt{m\in\Z_{\geq 0}\mid x\in \uf_{\varphi}^mM}.\]
It is clear that $x=0$ if and only if $\Ord_{\uf_\vp}(x)=\infty$. If $M$ is a $\Lam$-module, we let $M\ot_\varphi\cO_\varphi:=M\ot_{\Lam,\varphi}\cO_\varphi$.
\subsubsection{} %For a $\Lambda$-module $M$, let $\mathrm{Fitt}_{\Lambda}(M)$ be the Fitting ideal of $M$ over $\Lambda$.
For a finitely generated $\Lambda$-module $M$, we denote the characteristic ideal
attached to $M$ by $\mathrm{char}_{\Lambda}(M)$. Recall the following result of Bertolini-Darmon \cite[\proposition 3.1]{Bertolini_Darmon:IMC_anti}.
\begin{lm}\label{L:homomorphism}
Let $M$ be a finitely generated $\Lambda$-module and $L$ is an element of
$\Lambda$.
Suppose that for any homomorphism
$\varphi : \Lambda \to \cO_{\varphi}$, we have %$\varphi (L) \in \mathrm{Fitt}_{\Lambda}(M\otimes_\varphi \cO_\varphi)\iff$
\[\length_{\cO_\varphi}(M\ot_\varphi\cO_\varphi)\leq \Ord_{\uf_\varphi}(\varphi(L)).\]
 Then $L\in \mathrm{char}_{\Lambda}(M)$.
\end{lm}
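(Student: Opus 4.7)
The plan is to verify $L \in \mathrm{char}_\Lambda(M)$ one height-one prime of $\Lambda \cong \cO\powerseries{T}$ at a time, using the structure theorem together with carefully chosen families of specializations. First I would write $M$ up to pseudo-isomorphism as $\Lambda^r \oplus \bigoplus_i \Lambda/(\pi_i^{e_i})$ with $\pi_i$ irreducible. If $r \geq 1$, then for any $\varphi$ the module $M \otimes_\varphi \cO_\varphi$ contains a copy of $\cO_\varphi^r$ modulo a finite error and so has infinite $\cO_\varphi$-length; the hypothesis then forces $\varphi(L) = 0$ for every $\varphi$, and applying this to $T \mapsto t$ for all $t$ in the maximal ideal of $\cO$ yields $L = 0 \in (0) = \mathrm{char}_\Lambda(M)$. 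In the remaining torsion case, Weierstrass preparation lets me write $\mathrm{char}_\Lambda(M) = (\varpi^a F)$ and $L = \varpi^b \cdot u \cdot G$ with $F,G \in \cO[T]$ distinguished and $u \in \Lambda^\times$, so the goal reduces to (i) $v_\pi(F) \leq v_\pi(G)$ for every irreducible distinguished $\pi$, and (ii) $a \leq b$.

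For (i), fix such a $\pi$ with an algebraic root $\alpha \in \Qpbar$, and let $\cO_\alpha$ be the integer ring of $E(\alpha)$ with uniformizer $\varpi_\alpha$ and normalized valuation $v$. I would then analyze the family of specializations $\varphi_s \colon T \mapsto \alpha + \varpi_\alpha^s$ as $s \to \infty$. Since only the factor $T - \alpha$ in the factorization of $\pi$ becomes small under $\varphi_s$, one has $v(\varphi_s(\pi)) = s + O(1)$, while $v(\varphi_s(\pi')) = O(1)$ for any other distinguished irreducible $\pi' \neq \pi$. Summing length contributions from the pseudo-isomorphism decomposition (the finite pseudo-null error contributing only a bounded $O(1)$) yields
\[
\length_{\cO_\alpha}(M \otimes_{\varphi_s} \cO_\alpha) = v_\pi(F)\cdot s + O(1), \qquad v(\varphi_s(L)) = v_\pi(G)\cdot s + O(1),
\]
and the hypothesis together with $s \to \infty$ gives $v_\pi(F) \leq v_\pi(G)$, hence $F \mid G$ in $\cO[T]$.

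For (ii), I would instead use cyclotomic specializations $\varphi_n \colon T \mapsto \zeta_{p^n}-1$ into $\cO[\mu_{p^n}]$, with ramification index $e_n := [\cO[\mu_{p^n}]:\cO]$ tending to infinity. Every root of a distinguished polynomial has fixed positive $p$-adic valuation whereas $v(\zeta_{p^n}-1)$ stays bounded (equal to $1$ in the usual normalization), so for $n$ large each linear factor $\zeta_{p^n}-1-\alpha_i$ has the same valuation as $\zeta_{p^n}-1$ itself; consequently $v(\varphi_n(F)) = \deg F$ and $v(\varphi_n(G)) = \deg G$ are bounded in $n$, while $v(\varphi_n(\varpi)) = e_n$. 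The length computation then gives $\length_{\cO_\varphi}(M \otimes_{\varphi_n} \cO_\varphi) = a e_n + O(1)$ and $v(\varphi_n(L)) = b e_n + O(1)$, and dividing the hypothesis by $e_n$ in the limit $n \to \infty$ yields $a \leq b$.

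The main technical obstacle will be the uniform control of the pseudo-null error terms in both families. I expect to handle this by noting that any pseudo-null $\Lambda$-module is finite, while for each of the specializations above the ideal $\varphi(\frakm_\Lambda)\cO_\varphi$ is generated by an element of $v$-valuation bounded independently of the parameter (namely by $\min(v(\varpi),v(\alpha))$ in the first family once $s$ is sufficiently large, and by $1$ in the cyclotomic family). Hence every $\Lambda$-composition factor of the pseudo-null error tensors to an $\cO_\varphi$-module of bounded length, and the total error contribution stays bounded in $s$ (resp.~$n$), which is precisely the $O(1)$ needed for the leading-order asymptotics to force the desired inequalities.
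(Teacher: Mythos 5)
The paper does not give its own proof of this lemma; it simply cites Bertolini--Darmon, \emph{loc.\ cit.}, Prop.~3.1. Your argument is the standard specialization proof of that result and, after checking the details, it is correct. The structure is exactly right: reduce to the torsion case by showing the hypothesis forces $L=0$ when $\mathrm{rank}_\Lambda M>0$; use Weierstrass preparation to split the comparison into (i) the distinguished-polynomial part and (ii) the $\mu$-invariant part; and handle each with a one-parameter family of specializations in which the valuation contribution of the targeted height-one prime dominates while everything else (including the pseudo-null error) stays $O(1)$.

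Two small technical remarks, neither a gap. First, on the pseudo-null error: your bound via $\varphi(\frakm_\Lambda)\cO_\varphi$ and composition factors gives an upper bound on $\length_{\cO_\varphi}(N\otimes_\varphi\cO_\varphi)$ for finite $N$, and for the argument one actually only needs a \emph{lower} bound $\length(M\otimes_\varphi\cO_\varphi)\ge\length(E\otimes_\varphi\cO_\varphi)-O(1)$, where $E$ is the elementary module pseudo-isomorphic to $M$; this follows from right-exactness of $\otimes$ applied to the finite cokernel alone, so no control of $\mathrm{Tor}_1$ is needed. Second, in the cyclotomic family your ``$v(\varphi_n(F))=\deg F$'' and ``$v(\zeta_{p^n}-1)=1$'' are correct only when $\cO/\Zp$ is unramified; in general both are multiplied by the ramification index $e(\cO/\Zp)$, but they remain bounded in $n$ while $\Ord_{\varpi_\varphi}(\varpi)\to\infty$, which is all you use. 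With these cosmetic fixes the proof is complete and matches the intended argument of the cited source.
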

Let $n$ be a positive integer and let $\Delta>1$ be a square-free product of an odd number of prime factors which satisfies \defref{D:1} (1). For each $n$-admissible form $\cD =(\Delta,f_n)$ as in \defref{D:1},
we define two non-negative integers:
\begin{align*}
s_{\cD}= &\length_{\cO_{\varphi}}\Sel_\Delta(K_{\infty},\Afn)^\vee \otimes_\varphi\cO_\varphi;\\
t_{\cD}= &\Ord_{\varpi_\varphi}\varphi (\theta_\infty(\cD))\\
&\quad(\varphi(\theta_\infty(\cD))\in \cOn\powerseries{\Gamma}\ot_\varphi\cO_\varphi=\cO_\varphi/(\uf_\varphi^n)).
\end{align*}

The following key proposition is the analogue of \cite[\proposition 4.3]{Pollack_Weston:AMU}. The proof will be given in \secref{SS:proof}.
\begin{prop}\label{P:induction}Assume that \CR\,and \eqref{PO} hold. %and \eqref{CT}.
Let $t^*\leq n$ be a non-negative integer. Let $\cD_{t^*} =(\Delta,{f}_{n+t^*})$ be an $(n+t^*)$-admissible form and let $\cD_0=\cD_{t^*}\pmod{\uf^n}:=(\Delta,f_{n+t^*} \pmod{\varpi^n})$ be an $n$-admissible form.
Suppose that $t_{\cD_0}\leq t^*$. Then we have $s_{\cD_0}\leq 2t_{\cD_0}$.
\end{prop}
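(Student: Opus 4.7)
The plan is to argue by induction on $t := t_{\cD_0}$, following the template of \cite[Proposition~4.3]{Pollack_Weston:AMU} while using the ambient $(n+t^*)$-admissible lift $\cD_{t^*}$ as the source of lifted Euler system classes. Set $M := \Sel_\Delta(\cK_\infty, \Afn)^\vee \ot_\varphi \cO_\varphi$, so that $s_{\cD_0} = \length_{\cO_\varphi} M$; the goal is $\length_{\cO_\varphi} M \le 2t$. The main tools are the Euler system $(\kap_{\cD_{t^*}}(\ell))$ and its reduction $(\kap_{\cD_0}(\ell))$ mod $\uf^n$, the two reciprocity laws (\thmref{T:first} and \thmref{T:second}), the production of admissible primes (\thmref{T:Theorem3.2} and \propref{P:enlarge}), and the control / Poitou--Tate package of \secref{S:Selmer}.

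\emph{Base case ($t = 0$).} Here $\varphi(\theta_\infty(\cD_0)) \in (\cO_\varphi/\uf_\varphi^n)^{\x}$. Suppose for contradiction that $M \ne 0$, and pick $s \in \Sel_\Delta(\cK_\infty, \Afn)$ whose image pairs nontrivially with $M$. By \thmref{T:Theorem3.2}, choose an $n$-admissible prime $\ell$ so that $\varphi(v_\ell(s)) \ne 0$. The class $\kap_{\cD_0}(\ell) \in \wh\Sel_{\Delta\ell}(\cK_\infty, \Tfn)$ satisfies $\pd_\ell \kap_{\cD_0}(\ell) = \theta_\infty(\cD_0)$ by \thmref{T:first}, and the global reciprocity law \eqref{E:globalRec} (whose contribution is concentrated at $\ell$ since $\kap_{\cD_0}(\ell)$ and $s$ have complementary local conditions elsewhere) gives $\pair{\pd_\ell \kap_{\cD_0}(\ell)}{v_\ell(s)}_\ell = 0$. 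After applying $\varphi$, the left factor is a unit, contradicting the perfectness of local Tate duality (\propref{P:annihilator}). Hence $M = 0$.

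\emph{Inductive step ($t \ge 1$).} Assume the proposition for admissible forms of theta valuation $< t$. Using \thmref{T:Theorem3.2} and \propref{P:enlarge}, select $n$-admissible primes $\ell_1, \ell_2$ such that the localizations $v_{\ell_1}, v_{\ell_2}$ cut out generators of the Pontryagin dual of the specialized Selmer group. One arranges further that $\ell_1, \ell_2$ are in fact $(n+t^*)$-admissible: the congruence condition in \defref{D:admissibleprimes}\,(4) only weakens under $\uf^{n+t^*} \to \uf^n$, and \thmref{T:Theorem3.2} applied with the mod-$\uf^{n+t^*}$ representation provides the required primes. Applying \thmref{T:second} to $\cD_{t^*}$ at $(\ell_1, \ell_2)$ produces an $(n+t^*)$-admissible form $\cD''_{t^*} = (\Delta\ell_1\ell_2, g'')$ with
\[v_{\ell_1}(\kap_{\cD_{t^*}}(\ell_2)) = v_{\ell_2}(\kap_{\cD_{t^*}}(\ell_1)) = \theta_\infty(\cD''_{t^*}).\]
Reducing modulo $\uf^n$ gives an $n$-admissible form $\cD''_0$ with $\theta_\infty(\cD''_0) = v_{\ell_1}(\kap_{\cD_0}(\ell_2))$. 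A careful choice of $\ell_1, \ell_2$ against a Selmer element of maximal $\varphi$-valuation then forces $\Ord_{\uf_\varphi} \varphi(\theta_\infty(\cD''_0)) \le t - 1$, so $t_{\cD''_0} \le t - 1 \le t^*$, enabling the induction hypothesis on $\cD''_0$.

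Finally, to compare $s_{\cD_0}$ with $s_{\cD''_0}$, apply the Poitou--Tate exact sequence (\lmref{L:exactness}) for both $\Sel_\Delta$ and $\Sel_{\Delta\ell_1\ell_2}$, together with the control theorems (\propref{P:control1}, \corref{C:free}, \propref{P:maximal}). Enlarging $\Delta$ to $\Delta\ell_1\ell_2$ relaxes the local condition at each $\ell_i$ from finite to ordinary, and \lmref{L:rank-one} shows the change is controlled by two rank-one singular quotients $\wh H^1_\sing(\cK_{\infty,\ell_i}, \Tfn)$. A diagram chase yields $s_{\cD_0} \le s_{\cD''_0} + 2$; combined with the induction hypothesis $s_{\cD''_0} \le 2 t_{\cD''_0} \le 2(t-1)$, this gives $s_{\cD_0} \le 2t$ as desired.

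\emph{Main obstacle.} The delicate step is ensuring that the Second Reciprocity Law actually produces a form $\cD''_0$ whose theta element has strictly smaller $\varphi$-valuation. The hypothesis $t_{\cD_0} \le t^*$ is precisely the lifting budget needed: without the $(n+t^*)$-level lift $\cD_{t^*}$, the class $\kap_{\cD_0}(\ell_2)$ would not carry enough information modulo $\uf^n$ for $v_{\ell_1}(\kap_{\cD_0}(\ell_2))$ to realize a drop in theta valuation, and the induction would collapse. A secondary but nontrivial point is extracting the inequality $s_{\cD_0} \le s_{\cD''_0} + 2$ from the Poitou--Tate four-term sequences while tracking the specialization via $\varphi$; this is where the freeness of $\wh\Sel_\Delta^S(K_\infty, \Tfn)$ over $\Lam_n$ (\corref{C:free}) and the orthogonality of local conditions (\propref{P:annihilator_ord}) make the argument uniform across $\varphi$.
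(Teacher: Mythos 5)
Your base case and overall inductive architecture match the paper's, and the use of the lifted form $\cD_{t^*}$ as the ``budget'' for the induction is the right intuition. However, there is a genuine gap in the inductive step, at the sentence ``A diagram chase yields $s_{\cD_0}\le s_{\cD''_0}+2$.'' This inequality does not follow from the Poitou--Tate four-term sequence and \lmref{L:rank-one} as you suggest. The two singular quotients $\wh H^1_\sing(K_{\infty,\ell_i},\Tfn)$ are each free of rank one over $\Lam/\uf^n\Lam$, so after applying $\ot_\varphi\cO_\varphi$ each has $\cO_\varphi$-length $n$ (not $1$). A naive length count on the exact sequence of \lmref{L:exactness} therefore only gives the useless bound $s_{\cD_0}\le s_{\cD''_0}+2n$. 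To obtain a usable estimate one cannot simply ``relax the local condition at two rank-one primes''; one must show that the connecting map $\eta_s^\varphi$ into $\Sel_\Delta(K_\infty,\Afn)^\vee\ot_\varphi\cO_\varphi$ factors through a much smaller quotient of the two singular summands.

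This is exactly where the divided-class construction --- which your sketch omits entirely --- enters. The paper works with the $(n+t)$-level form $\cD=(\Delta,f_{n+t^*}\pmod{\uf^{n+t}})$, takes $\kappa_{\cD,\varphi}(\ell)$ in the free $\cO_\varphi/\uf_\varphi^{n+t}$-module $\cM_{n+t}=\wh\Sel_\Delta^S(K_\infty,T_{f,n+t})\ot_\varphi\cO_\varphi$, divides by $\uf_\varphi^{e_\cD(\ell)}$ to get $\wtd\kappa_{\cD,\varphi}(\ell)$, and reduces to $\cM_n$ to produce a class $\kappa'_{\cD,\varphi}(\ell)$ of valuation zero with $\Ord_{\uf_\varphi}(\pd_\ell\kappa'_{\cD,\varphi}(\ell))=t-e_\cD(\ell)$. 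The global reciprocity argument (\lmref{L:eta}) then shows $\eta_s^\varphi$ annihilates $\pd_{\ell_i}(\kappa'_{\cD,\varphi}(\ell_i))$, so $\eta_s^\varphi$ factors through $\bigl(\cO_\varphi/(\uf_\varphi^{t-t_{\cD''_0}})\bigr)^{\oplus 2}$ and one gets the correct inequality $s_{\cD_0}\le s_{\cD''_0}+2(t-t_{\cD''_0})$, which combined with the induction hypothesis $s_{\cD''_0}\le 2t_{\cD''_0}$ closes the induction. Note also that the drop $t_{\cD''_0}<t$ is not obtained by ``a careful choice against a Selmer element of maximal $\varphi$-valuation'' but rather by defining $\Pi$ to consist of primes minimizing $e_\cD(\ell)$ and proving $e<t$ (Lemma~\parref{L:3}) via a duality argument pairing $v_\ell(s)$ against $\pd_\ell(\kappa'_{\cD,\varphi}(\ell))$ for an arbitrary $(n+t^*)$-admissible prime; without the divided class $\kappa'_{\cD,\varphi}(\ell)$ that argument cannot even be formulated.
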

Note that if $\mathcal{D}=(\Delta,f_{n+t^*})$ is an $n+t^*$-adimissible form, then $\mathcal{D}\pmod{\varpi^n}=(\Delta,f_{n+t^*}\pmod{\varpi^n})$ is an $n$-admissible form.

Let $\pi$ be the unitary cuspidal automorphic representation of $\GL_2(\A)$ attached to the new form $f$.
 %and let $\pi'$ be the automorphic representation of $B_{N^-}^\x_\A$ such that the Jacquet-Langlands correspondence of $\pi'$ is $\pi$.
 Let \[\theta_\infty:=\Theta_\infty(\pi,\bfone)\in\Lam\] be the theta element with trivial branch character defined in \cite[page 21]{Hsieh_Chida}.
\begin{prop}\label{P:R1}If $\Delta=N^-$, there exists an $n$-admissible form $\cD_n^f=(N^-,\bff_n^{\dagger,[k-2]})$ such that \[\theta(\cD_n^f)\con \theta_\infty\pmod{\uf^n}.\]
\end{prop}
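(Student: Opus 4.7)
The plan is to construct $\bff_n^{\dagger,[k-2]}\in\SB(\cU_{N^+,p^n},\cOn)$ from the weight-$k$ definite quaternionic eigenform $\bff^\dagger\in\SBk(\cU_{N^+,p^n},\cO)$ used in \cite{Hsieh_Chida} to define $\theta_\infty$, by collapsing the polynomial values $\bff^\dagger(b)\in L_k(\cO)=\cO[X,Y]_{k-2}$ to scalars via evaluation at a distinguished CM direction. The key point, as emphasized in the introduction, is that we do not congruent weight-$k$ and weight-$2$ forms as forms, but rather match their evaluations at Gross points modulo $\uf^n$.

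First I would recall from \cite{Hsieh_Chida} that $\theta_\infty$ arises from a $p$-ordinary eigenform $\bff^\dagger$ with $\lam_{\bff^\dagger}=\lam_f$, via the formula
$$\theta_m=\al_p(f)^{-m}\sum_{[a]_m\in G_m}\bff^\dagger(x_m(a))(v_\Theta)\cdot\pi_m([a]_m),$$
where $v_\Theta\in\cO^2$ is a distinguished CM-vector attached to the local embeddings $i_p,i_\pme$ of \eqref{E:op1.W}--\eqref{E:op2.B}. In coordinates adapted to the Iwahori-type condition $u_p\con\pMX{a}{b}{0}{a}\pmod{p^n}$ defining $\cU_p$, one may arrange that $v_\Theta$ sits in the line spanned by $(0,1)$.

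Next I would define $\bff_n^{\dagger,[k-2]}(b)$ by an explicit formula of the shape
$$\bff_n^{\dagger,[k-2]}(b):=\det(b_p)^{-(k-2)/2}\bff^\dagger(b)(v_\Theta\cdot b_p)\pmod{\uf^n}$$
(with the determinant twist renormalized using the ordinariness of $\bff^\dagger$ to kill $p$-adic denominators coming from $\det(\cmpt_p^{(m)})=p^m$), and verify that this descends to a function on $B^\x\bksl\wh B^\x/\cU_{N^+,p^n}$. Under $b\mapsto bu$ for $u\in\cU_{N^+,p^n}$, the $\rho_k$-transformation $\bff^\dagger(bu)=\rho_k(u_p^{-1})\bff^\dagger(b)$ contributes a factor $\det(u_p)^{(k-2)/2}$ which is exactly cancelled by the determinant twist, while the level condition at $p$ together with the choice of $v_\Theta$ ensures that $v_\Theta\cdot u_p$ differs from $v_\Theta$ only by a scalar absorbed by the homogeneity of degree $k-2$ of $\bff^\dagger(b)$. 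Then I would check the conditions of Definition~\ref{D:1}: (1) is trivial since $\Delta=N^-$; (2) $\bff_n^{\dagger,[k-2]}\not\con 0\pmod{\uf}$ follows from the non-vanishing of $\bff^\dagger\pmod{\uf}$ (via the freeness result \cite[\proposition 6.2]{Hsieh_Chida}) and the non-degeneracy of the evaluation at $v_\Theta$; (3) the eigenform congruence $\lam_{\bff_n^{\dagger,[k-2]}}\con\lam_f\pmod{\uf^n}$ holds because Hecke operators at primes $\pme\ne p$ commute with the evaluation map, while at $p$ the $U_p$- and $\Dmd{a}$-eigenvalues are produced by the ordinariness normalization $\al_p(f)\in\cO^\x$ together with the determinant twist. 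The required congruence $\theta_\infty(\cD_n^f)\con\theta_\infty\pmod{\uf^n}$ then follows at once by comparing \eqref{E:theta_elt} with the formula for $\theta_m$ recalled above, since both sums agree term by term modulo $\uf^n$.

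The main obstacle is the precise specification of the normalization that simultaneously (i) keeps the values in $\cOn$, (ii) produces $\cU_{N^+,p^n}$-invariance modulo $\uf^n$, and (iii) yields the admissibility-required $\Dmd{a}$-eigenvalue $a^{(k-2)/2}$ (rather than, say, the naive $a^{k-2}$ produced by raw evaluation at $(0,1)$). This is a delicate interplay between the determinant twist, the explicit form of $v_\Theta$ relative to the CM point $\Theta$ via $i_p$, and the ordinariness of $\bff^\dagger$ at $p$, and is essentially the same input underlying the interpolation formula \eqref{E:interpolation} in \cite{Hsieh_Chida}.
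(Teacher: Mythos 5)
Your overall strategy matches the paper's: take the Jacquet--Langlands transfer $\bff$ of $f$ to the definite quaternion algebra of discriminant $N^-$, form its $p$-stabilization $\bff^\dagger\in\SBk(\wh R^\x_{pN^+},\cO)$, and define a weight-two form by collapsing the $L_k(\cO)$-valued $\bff^\dagger$ to scalars so that the resulting theta element reproduces $\theta_\infty$ modulo $\uf^n$ term by term at Gross points. Where you diverge from the paper is the specific collapse, and the divergence is exactly where your proposal has a genuine, acknowledged gap. The paper defines
\[
\bff_n^{\dagger,[k-2]}(b):=\sqrt{\beta}^{\frac{2-k}{2}}\cdot\pair{X^{k-2}}{\bff^\dagger(b)}_k\pmod{\uf^n},
\]
pairing $\bff^\dagger(b)$ with the \emph{fixed} highest-weight vector $X^{k-2}\in L_k$ and multiplying by a constant unit ($\sqrt{\beta}\in\Zp^\x$ by the choice of $\beta$). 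This is manifestly $\cOn$-valued, and invariance under $\cU_{N^+,p^n}$ is automatic: for $u_p\con\pMX{a}{b}{0}{a}\pmod{p^n}$, the determinant normalization already built into the unitary $\rho_k$ gives
\[
\rho_k(u_p)X^{k-2}=\det(u_p)^{-\frac{k-2}{2}}(aX)^{k-2}=X^{k-2},
\]
so no extra determinant twist is needed and no $p$-adic denominators appear. Your proposed formula $\det(b_p)^{-(k-2)/2}\,\bff^\dagger(b)(v_\Theta\cdot b_p)$ instead uses a $b$-dependent evaluation vector together with an explicit determinant twist; as you note, the factor $\det(b_p)^{-(k-2)/2}$ picks up $p$-adic denominators (e.g.\ from $\det(\cmpt_p^{(m)})=p^m$), and it is not explained how ordinariness of $\bff^\dagger$ removes them \emph{pointwise}---ordinariness controls the $U_p$-eigenvalue $\alpha_p(f)$ and hence the global normalization $\alpha_p(f)^{-m}$ in the theta element, but it does not force your per-point values into $\cO$. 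This is precisely the ``main obstacle'' you flag at the end; the paper's choice of the $\rho_k(\cU_p)$-fixed vector $X^{k-2}$ sidesteps it entirely, so that the only normalizing constant is the unit $\sqrt{\beta}^{(2-k)/2}$ rather than a variable determinant. The remaining verifications (nonvanishing modulo $\uf$, the Hecke eigenform properties at $p$, and the congruence $\theta(\cD_n^f)\con\theta_\infty\pmod{\uf^n}$) the paper defers to \cite[Theorem 5.7]{Hsieh_Chida}, which is the same appeal you make, so there the arguments coincide.
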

\begin{proof}Let $\Delta=N^-$ and let $B$ be the definite quaternion algebra of absolute discriminant $N^-$. Let  $\lam_f^\circ:\bbT_B(N^+)\to\cO$ be the ring homomorphism defined by $\lam_f^\circ(T_\pme)=\al_\pme(f),\,\lam_f^\circ(S_\pme)=1$ if $\pme\ndivides N$ and $\lam_f^\circ(U_\pme)=\al_\pme(f)$ if $\pme\divides N$. By the Jacquet-Langlands correspondence, there exists a modular form $\bff\in \SBk(\wh R^\x_{N^+},\cO)$ such that $\bff\not \con 0\pmod{\uf}$ and $\bff$ is an eigenform of $\bbT_B(N^+)$ and $t \bff=\lam_f^\circ(t) \bff$ for all $t\in \bbT_B(N^+)$. Let $\bff^\dagger\in \SBk(\wh R^\x_{pN^+},\cO)$ be the $p$-stabilization of $\bff$ defined by
 \[\bff^\dagger(b)=\bff(b)-p^\frac{k-2}{2}A_p^{-1}\cdot\bff(b\pDII{p^{-1}}{1}).\]
The theta element $\theta_\infty$ is essentially constructed from the evaluation of $\bff^\dagger$ at Gross points (see \cite[Definition 4.1]{Hsieh_Chida}). Define $\bff_n^{\dagger,[k-2]}\in \cS^B(\cU_{N^+,p^n},\cOn)$ by
\[\bff_n^{\dagger,[k-2]}(b):=\sqrt{\beta}^\frac{2-k}{2}\cdot \pair{X^{k-2}}{\bff^\dagger(b)}_k\pmod{\uf^n}.\]
Following the argument in the proof of \cite[Theorem 5.7]{Hsieh_Chida}, one can show $\bff_n^{\dagger,[k-2]}\not\con 0\pmod{\uf}$, and $\cD_{n}^f:=(N^-,\bff_n^{\dagger,[k-2]})$ is the desired $n$-admissible form.
\end{proof}

We deduce our main theorem (\thmref{T:A}) from the above propositions.
\begin{thm}\label{T:main-theorem}
With the hypotheses \CR\,and \eqref{PO}, we have
$$
\mathrm{char}_{\Lambda}\Sel(K_\infty,A_f) \supset (L_p(K_{\infty},f)) .
$$
\end{thm}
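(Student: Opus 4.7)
The plan is to invoke the characteristic-ideal criterion of \lmref{L:homomorphism}: it suffices to show, for every $\cO$-algebra homomorphism $\varphi:\Lam\to\cO_\varphi$ into a characteristic-zero discrete valuation ring, the pointwise bound
\[\length_{\cO_\varphi}\bigl(\Sel(K_\infty,A_f)^\vee\otimes_\varphi\cO_\varphi\bigr)\leq \Ord_{\uf_\varphi}\varphi\bigl(L_p(K_\infty,f)\bigr).\]
Under hypothesis \CR, \propref{P:3} tells us that $\Sel_{N^-}(K_\infty,A_f)$ has finite index in $\Sel(K_\infty,A_f)$, so the two Pontryagin duals share a common $\Lam$-characteristic ideal and it is enough to bound the left-hand side with $\Sel_{N^-}$ in place of $\Sel$. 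Writing $t:=\Ord_{\uf_\varphi}\varphi(\theta_\infty)$, I may assume $t<\infty$; using $L_p(K_\infty,f)=\theta_\infty^2$, the target inequality reduces to
\[\length_{\cO_\varphi}\bigl(\Sel_{N^-}(K_\infty,A_f)^\vee\otimes_\varphi\cO_\varphi\bigr)\leq 2t.\]

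Next, for each integer $n>t$, I would set $t^{*}:=t\leq n$ and apply \propref{P:R1} at level $n+t^{*}$ to obtain the $(n+t^{*})$-admissible form $\cD_{t^{*}}=(N^-,\bff_{n+t^{*}}^{\dagger,[k-2]})$ with $\theta_\infty(\cD_{t^{*}})\equiv\theta_\infty\pmod{\uf^{n+t^{*}}}$. Its reduction modulo $\uf^n$ is the $n$-admissible form $\cD_0=(N^-,\bff_n^{\dagger,[k-2]})$, for which $\theta_\infty(\cD_0)\equiv\theta_\infty\pmod{\uf^n}$ forces $t_{\cD_0}=t\leq t^{*}$. Then \propref{P:induction} yields
\[s_{\cD_0}=\length_{\cO_\varphi}\bigl(\Sel_{N^-}(K_\infty,\Afn)^\vee\otimes_\varphi\cO_\varphi\bigr)\leq 2t_{\cD_0}=2t,\]
uniformly in $n$. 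Combining the identification $\Sel_{N^-}(K_\infty,\Afn)=\Sel_{N^-}(K_\infty,A_f)[\uf^n]$ from \propref{P:control1}(2) with Pontryagin duality gives $\Sel_{N^-}(K_\infty,\Afn)^\vee=\Sel_{N^-}(K_\infty,A_f)^\vee/\uf^n$; letting $n\to\infty$ in the uniform bound therefore transports the estimate to $\Sel_{N^-}(K_\infty,A_f)^\vee\otimes_\varphi\cO_\varphi$, and \lmref{L:homomorphism} concludes.

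The decisive technical input, and the main obstacle, is \propref{P:induction}, a Kolyvagin-style decreasing induction on the $t$-invariant that welds together essentially every structural result developed in the paper. \thmref{T:Theorem3.2} together with \propref{P:enlarge} produces an $n$-admissible prime $\ell_1$ whose localization $v_{\ell_1}$ injects on a prescribed non-trivial Selmer class; the first reciprocity law \thmref{T:first} translates the singular residue $\partial_{\ell_1}\kap_\cD(\ell_1)$ into $\theta_\infty(\cD)$; the second reciprocity law \thmref{T:second} reinterprets the finite localization $v_{\ell_2}\kap_\cD(\ell_1)$ as $\theta_\infty(\cD'')$ for a new $n$-admissible form $\cD''=(\Delta\ell_1\ell_2,\gn)$; and the global reciprocity identity \eqref{E:globalRec}, applied to Selmer classes paired against $\kap_\cD(\ell_1)$, forces the decrement in the $t$-invariant to produce a matching drop in the $s$-invariant, up to a factor of two. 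The hypotheses \CR\ and \eqref{PO} enter precisely through Ihara's lemma (\corref{C:1}) and the freeness statements underpinning \propref{P:step1} and \propref{P:maximal}, both of which are needed to run the induction cleanly through the $\Lam$-adic formalism.
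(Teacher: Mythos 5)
Your proposal is correct and follows the same essential route as the paper: reduce via \lmref{L:homomorphism} to a pointwise length bound, realize $L_p$ modulo $\uf^n$ through the $n$-admissible forms of \propref{P:R1}, and invoke the Euler-system induction of \propref{P:induction} with the control theorems. The only cosmetic deviations are that you set $t^* := t = \Ord_{\uf_\varphi}\varphi(\theta_\infty)$ whereas the paper chooses $t^*$ strictly larger than $\Ord_{\uf_\varphi}\varphi(L_p)=2t$ (both choices satisfy $t_{\cD_0}\le t^*\le n$), and that you pass from $\Sel_{N^-}$ to $\Sel$ by the finite-index statement in \propref{P:3} alone (equality of characteristic ideals), while the paper additionally invokes the nonvanishing $L_p(K_\infty,f)\neq 0$ from Vatsal and \cite{Hsieh_Chida} to upgrade to $\Sel_{N^-}(K_\infty,A_f)=\Sel(K_\infty,A_f)$ via the cotorsion clause of \propref{P:3}; your version suffices for the stated divisibility.
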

\begin{proof}
Let  $\varphi :\Lambda \to \cO_{\varphi}$ be an $\cO$-algebra homomorphism. If $\varphi (L_p(K_\infty,f))=0$, then $\varphi (L_p(K_\infty,f))$
belongs to $\mathrm{Fitt}_{\cO_\varphi}(\Sel(K_\infty,A_f)^\vee\otimes_\Lambda \cO_\varphi)$
clearly.
Therefore we may assume that $\varphi (L_p(K_\infty,f))\neq 0$.
Choose $t^*$ larger than the $\cO_\varphi$-valuation of $\varphi (L_p(K_\infty,f))$.
For each positive integer $n$, consider the $(n+t^*)$-admissible form $\cD^f_{n+t^*}=(N^-,\bff_{n+t^*}^{\dagger,[k-2]})$ in \propref{P:R1}. Applying \propref{P:induction} to $\cD^f_{n+t^*}$ and $D^f_n=\cD^f_{n+t^*}\pmod{\uf^n}$, we find that
$\varphi (L_p(K_\infty,f))\pmod{\uf^n}=\varphi (\theta_\infty (\cD^f_n)^2)$
belongs to $\mathrm{Fitt}_{\cO_\varphi}(\Sel_{N^-}(K_\infty,\Afn)^\vee\otimes_\varphi\cO_\varphi)$ for all $\varphi$ and $n$. By \lmref{L:homomorphism}, $L_p(K_\infty,f)$ belongs to $\cap_{n=1}^\infty\Fitt_\Lam\Sel_{N^-}(K_\infty,\Afn)^\vee=\Fitt_\Lam\Sel_{N^-}(K_\infty,A_f)^\vee$. By \cite{Vatsal:nonvanishing} and \cite{Hsieh_Chida}, $L_p(K_\infty,f)\not =0$, so $\Sel_{N^-}(K_\infty,A_f)$ is $\Lam$-cotorsion. The theorem thus follows from \propref{P:3}.
\end{proof}

\begin{cor}\label{C:main}With the hypotheses \CR\,and \eqref{PO}, if the central $L$-value $L(f/K,\frac{k}{2})$ is non-zero, then the minimal Selmer group $\Sel(K,A_f)$ is finite. Assume further that $\ol{\rho}_f$ is ramified at all $\ell\divides N^-$. Then
\begin{align*}\length_\cO(\Sel(K,A_f))+\sum_{\ell\divides N^+}t_f(\ell)\leq &\Ord_\uf\left(\frac{L(f/\cK,\frac{k}{2})}{\Omega_f}\right),\end{align*}
where $t_f(\ell)$ are the Tamagawa exponent at $\ell$.
\end{cor}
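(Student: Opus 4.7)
The plan is to deduce \corref{C:main} from \thmref{T:A} by specializing the divisibility at the trivial character of $\Gamma$ and invoking a control theorem for the minimal Selmer group. Let $\varphi_0:\Lam\to\cO$ be the augmentation sending $\Gamma$ to $1$. Specializing the interpolation formula \eqref{E:interpolation} at $\chi=\bfone$ (so $n=0$) gives
\begin{equation*}
\varphi_0(L_p(\cK_\infty,f)) = \Gamma(\tfrac{k}{2})^2 \cdot \frac{L(f/\cK,\frac{k}{2})}{\Omega_{f,N^-}} \cdot e_p(f,\bfone)^2 \cdot D_\cK^{k-2}\cdot u_\cK^2\sqrt{D_\cK}.
\end{equation*}
Under \eqref{CR${}^+$1} (in particular $p>k+1$, so $(\tfrac{k}{2}-1)!\in\cO^\x$) and $p\ndivides D_\cK$, every factor except $L(f/\cK,\frac{k}{2})/\Omega_{f,N^-}$ and $e_p(f,\bfone)^2$ is a $p$-adic unit; the factor $e_p(f,\bfone)^2$ is a unit by \eqref{PO} (third remark after \hypref{H:CR}). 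The additional hypothesis that $\bar\rho_f$ is ramified at every $\ell\divides N^-$ permits the period comparison $\Omega_{f,N^-}=u\cdot\Omega_f$ with $u\in\cO^\x$ (first remark after \hypref{H:CR}), so that
\begin{equation*}
\Ord_\uf \varphi_0(L_p(\cK_\infty,f)) = \Ord_\uf\!\left(\frac{L(f/\cK,\frac{k}{2})}{\Omega_f}\right).
\end{equation*}
In particular $L_p(\cK_\infty,f)\neq 0\in\Lam$ since $L(f/\cK,\frac{k}{2})\neq 0$.

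Next, I would apply \thmref{T:A}: the non-vanishing of $L_p(\cK_\infty,f)$ forces $\Sel(\cK_\infty,A_f)^\vee$ to be $\Lam$-torsion with characteristic ideal dividing $L_p(\cK_\infty,f)$. The standard inequality relating characteristic ideals of torsion $\Lam$-modules to the length of their coinvariants at a specialization then yields
\begin{equation*}
\length_\cO\bigl((\Sel(\cK_\infty,A_f)^\vee)_\Gamma\bigr) \leq \Ord_\uf\!\left(\frac{L(f/\cK,\frac{k}{2})}{\Omega_f}\right).
\end{equation*}
By Pontryagin duality $(\Sel^\vee)_\Gamma=(\Sel^\Gamma)^\vee$, and hence $\Sel(\cK_\infty,A_f)^\Gamma$ is finite of the same length.

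Finally, a Greenberg-type control theorem compares $\Sel(\cK,A_f)$ with $\Sel(\cK_\infty,A_f)^\Gamma$. Since $\cK_\infty/\cK$ is unramified outside $p$, the kernel and cokernel of the restriction map are concentrated at primes above $N^+$, producing the bound
\begin{equation*}
\length_\cO\Sel(\cK,A_f) + \sum_{\ell\divides N^+}t_f(\ell) \leq \length_\cO\Sel(\cK_\infty,A_f)^\Gamma.
\end{equation*}
Chaining this with the previous display gives the desired inequality, and the finiteness of $\Sel(\cK,A_f)$ follows from that of $\Sel(\cK_\infty,A_f)^\Gamma$. The main technical obstacle is the identification of the local cokernel at each split prime $\ell\divides N^+$ with precisely $t_f(\ell)=\length_\cO H^1(\cK_\ell^{ur},T_f)^{G_{\cK_\ell}}_{\mathrm{tor}}$; this is a Poitou-Tate-style local computation exploiting the tame structure $\pMX{\e}{*}{0}{\bfone}$ of $\rho_f^*$ at $\ell\divides N$ and the residual irreducibility from \hypref{H:CR}. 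All remaining ingredients (\thmref{T:A}, the interpolation formula, and the period comparison) are already in hand.
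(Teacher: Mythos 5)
Your steps up through the bound $\length_\cO\bigl(\Sel(K_\infty,A_f)^\vee\ot_{\varphi_0}\cO\bigr)\leq \Ord_\uf(L(f/K,\tfrac{k}{2})/\Omega_f)$ are correct and essentially match the paper: specialize the interpolation formula at $\chi=\bfone$, use \eqref{PO} so that $e_p(f,\bfone)$ is a unit, use $p>k+1$ and $p\ndivides D_\cK$ to absorb the Gamma, discriminant and class-number factors, and use the period identity $\Omega_{f,N^-}=u\cdot\Omega_f$ from the ramification hypothesis at $N^-$. (One small remark: the paper actually argues via $\Fitt_\Lam\Sel_{N^-}(K_\infty,\Afn)^\vee$ for all $n$, which sidesteps the pseudo-null issue lurking in the phrase ``standard inequality relating characteristic ideals to coinvariants''; but under the running hypotheses this is a cosmetic difference.)

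The genuine gap is in the last step. The inequality you want,
\[
\length_\cO\Sel(K,A_f)+\sum_{\ell\divides N^+}t_f(\ell)\ \leq\ \length_\cO\Sel(K_\infty,A_f)^\Gamma,
\]
does \emph{not} follow from a control-theorem style argument, because it goes in the opposite direction to what such arguments give. A control theorem produces an exact sequence in which $\Sel(K,A_f)\hookrightarrow\Sel(K_\infty,A_f)^\Gamma$ and the cokernel \emph{injects} into the local discrepancy terms; that only yields the upper bound $\length\Sel(K_\infty,A_f)^\Gamma\leq\length\Sel(K,A_f)+\sum_\ell t_f(\ell)$, which is useless here. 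What you actually need is a \emph{lower} bound on the cokernel, i.e.\ a surjectivity statement for a global-to-local map. Concretely, with $\Sel(K_\infty,A_f)^\Gamma=\Sel_{N^-}(K_\infty,A_f)^\Gamma\iso\Sel_{N^-}(K,A_f)$ (Proposition~\parref{P:3} and Proposition~\parref{P:control1}), your claimed inequality is equivalent to the surjectivity of
\[
\gamma:\ \Sel_{N^-}(K,A_f)\ \longrightarrow\ \prod_{\ell\divides N^+}H^1_\fin(K_\ell,A_f),
\]
whose kernel is exactly $\Sel(K,A_f)$. This surjectivity is not automatic. In the paper it is deduced from Greenberg's global-duality result \cite[Proposition~4.13]{Greenberg:Iwasawa_LMN_1716}, and crucially that application requires knowing \emph{first} that $\Sel_{N^-}(K,A_f)$ is finite (which is exactly what the bound from \thmref{T:main-theorem} supplies). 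Your write-up packages this into a ``Greenberg-type control theorem'' and then flags the identification of the local cokernel with $t_f(\ell)$ as the remaining obstacle, but the real missing ingredient is the surjectivity of $\gamma$, not the identification of the local terms. The paper's route also has a structural advantage worth noting: by passing through the $N^-$-ordinary Selmer group, the control theorem over $K_\infty/K$ is a clean isomorphism with no Tamagawa correction (Proposition~\parref{P:control1}); the Tamagawa factors at $\ell\divides N^+$ only enter in the purely finite-level comparison between $\Sel(K,A_f)$ and $\Sel_{N^-}(K,A_f)$, where Greenberg's surjectivity can be invoked once finiteness is in hand.
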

\begin{proof}Note that $H^1_\Ord(K_\ell,A_f)=\stt{0}$ for $\ell\divides N^-$, so we have an exact sequence
\[0\to\Sel(K,A_f)\to \Sel_{N^-}(K,A_f)\stackrel{\gamma}\longto \prod_{\ell\divides N^+}H^1_\fin(K_\ell,A_f).\]
By the interpolation formula \eqref{E:interpolation} of $L_p(K_\infty,f)$ at the trivial character $\bfone$ and the fact $e_p(f,\bfone)$ is a \padic unit under \eqref{PO}, we find that
\[\bfone(L_p(K_\infty,f))=u\cdot \frac{L(f/\cK,\frac{k}{2})}{\Omega_f}\text{ for some }u\in\cO^\x.\]
 By the control theorem \propref{P:control1} and \thmref{T:main-theorem}, \[\length_\cO(\Sel_{N^-}(K,A_f))=\Ord_\uf\left(\frac{L(f/\cK,\frac{k}{2})}{\Omega_f}\right)<\infty. \]
In particular, $\Sel_{N^-}(K,A_f)$ is finite, and by \cite[\proposition 4.13]{Greenberg:Iwasawa_LMN_1716} the map $\gamma$ is surjective. Therefore, we have
\begin{align*}
\length_\cO(\Sel_{N^-}(K,A_f))
=&\length_\cO(\Sel(K,A_f))+\sum_{\ell\divides N^+}\length_\cO(H^1_\fin(K_\ell,A_f))\\
=&\length_\cO(\Sel(K,A_f))+\sum_{\ell\divides N^+}t_f(\ell).
 \end{align*}
This finishes the proof.
\end{proof}
\subsection{Proof of \propref{P:induction}}\label{SS:proof}
We will prove \propref{P:induction} by induction on $t_{\cD_0}$. If $t_{\cD_0}=\infty$ or $s_{\cD_0}=0$ then $s_{\cD_0}\leq 2t_{\cD_0}$ holds trivially. Therefore,
without loss of generality, we may assume that
\begin{itemize}
\item $t_{\cD_0}<\infty\iff \vp(\theta_\infty(\cD_0))\not =0$,
\item $s_{\cD_0}>0\iff \Sel_\Delta(K_{\infty},\Afn)\otimes_\varphi\cO_\varphi$ is non-trivial.
\end{itemize}
We write $t=t_{\cD_0}$. Consider the $(n+t)$-admissible form \[\cD:=(\Delta,f_{n+t^*} \pmod{\varpi^{n+t}}).\]
Let $\ell$ be an $(n+t)$-admissible prime which does not divide $\Delta$. Enlarge $\{\ell \}$ to an $(n+t)$-admissible
set $S$ with $(S,\Delta)=1$ and let \[\kappa_\cD (\ell )\in \wh\Sel_{\Delta\ell}(K_\infty,T_{f,n+t})\subset  \wh\Sel_{\Delta}^S(K_\infty,T_{f,n+t})\]
be the cohomology class attached to $\cD$ and $\ell$ constructed in \subsecref{SS:CohomologyClass}. By \corref{C:free}, the module $\cM_n:=\wh\Sel_\Delta^S(K_\infty,T_{f,n})\otimes_\varphi \cO_\varphi$ is free over $\cO_\varphi \slash \varphi(\varpi^n) \cO_\varphi$ for all $n$. Denote by $\kappa_{\cD, \varphi}(\ell)$ the image of $\kappa_\cD (\ell)$
in $\cM_{n+t}$ and let
\[e_\cD(\ell):=\mathrm{ord}_{\varpi_\varphi}(\kappa_{\cD, \varphi} (\ell))\footnote{The definition of $e_\cD(\ell)$ also depends on an auxiliary choice of $S$.}.\]
It follows from \thmref{T:first} that
$$
e_\cD(\ell)\leq
\mathrm{ord}_{\varpi_\varphi}(\partial_\ell \kappa_{\cD, \varphi} (\ell))
=\mathrm{ord}_{\varpi_\varphi}(\varphi(\theta_\infty(\cD)))=\ord_{\varpi_\varphi}(\varphi (\theta_\infty(\cD_0)))=t.
$$
Choose an element $\widetilde{\kappa}_{\cD, \varphi}(\ell)\in \cM_{n+t}$ which satisfies
$\varpi_\varphi^{e_\cD(\ell)}\cdot \widetilde{\kappa}_{\cD,\varphi}(\ell)={\kappa}_{\cD,\varphi}(\ell)$.
Note that $\widetilde{\kappa}_{\cD,\varphi}(\ell)$ is well-defined up to the kernel of the homomorphism $\cM_{n+t}\to \cM_n$. Let $\kappa'_{\cD, \varphi}(\ell)$ be the natural image of the cohomology class
$\widetilde{\kappa}_{\cD,\varphi}(\ell)$ in $\cM_n$.

\begin{lm}\label{L:properties}
The cohomology class $\kappa'_{\cD,\varphi}(\ell)\in\wh\Sel_\Delta^S(K_\infty,\Tfn)\ot_\varphi\cO_\varphi$ satisfies the following properties:
\begin{enumerate}
\item $\mathrm{ord}_{\varpi_\varphi}(\kappa'_{\cD,\varphi}(\ell))=0$,
\item $\mathrm{ord}_{\varpi_\varphi}(\partial_{\ell}(\kappa'_{\cD,\varphi}(\ell)))=
t-e_\cD(\ell)$,
\item $\partial_q(\kappa'_{\cD,\varphi}(\ell))=0$ for all $q\ndivides \Delta\ell$,
\item $\res_q(\kappa'_{\cD,\varphi}(\ell))\in\wh H^1_\Ord(K_{\infty,q},\Tfn)\ot_\varphi\cO_\varphi$ for all $q\divides \Delta\ell$.
\end{enumerate}
\end{lm}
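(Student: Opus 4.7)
The plan is to transfer each property from the Euler system class $\kappa_\cD(\ell)\in\wh\Sel_{\Delta\ell}(K_\infty,T_{f,n+t})$ through the defining relation $\varpi_\varphi^{e_\cD(\ell)}\cdot\widetilde{\kappa}_{\cD,\varphi}(\ell)={\kappa}_{\cD,\varphi}(\ell)$ in $\cM_{n+t}$ and then descend to $\cM_n=\cM_{n+t}/\varphi(\varpi^n)\cM_{n+t}$. The key numerical input that glues everything together is the inequality $e_\cD(\ell)\le t\le n$ already established just before the lemma using \thmref{T:first}.

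First I would dispose of (1): by construction $\widetilde{\kappa}_{\cD,\varphi}(\ell)$ has $\varpi_\varphi$-valuation $0$ in the free $\cO_\varphi/\varphi(\varpi^{n+t})$-module $\cM_{n+t}$; since reduction modulo $\varphi(\varpi^n)$ is surjective on generators, valuation $0$ is preserved in $\cM_n$, giving (1). For (2), applying $\partial_\ell$ to the equation $\varpi_\varphi^{e_\cD(\ell)}\widetilde{\kappa}_{\cD,\varphi}(\ell)=\kappa_{\cD,\varphi}(\ell)$ and invoking \thmref{T:first} gives $\varpi_\varphi^{e_\cD(\ell)}\cdot\partial_\ell(\widetilde{\kappa}_{\cD,\varphi}(\ell))=\varphi(\theta_\infty(\cD))$, whose valuation equals $t_{\cD_0}=t$ (since $\theta_\infty(\cD)\equiv\theta_\infty(\cD_0)\pmod{\varpi^n}$ and $t\le n$). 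Hence $\partial_\ell(\widetilde{\kappa}_{\cD,\varphi}(\ell))$ has valuation exactly $t-e_\cD(\ell)$ in $\wh H^1_\sing(K_{\infty,\ell},T_{f,n+t})\otimes_\varphi\cO_\varphi$; because $t-e_\cD(\ell)\le t\le n$, reduction to the $T_{f,n}$-version preserves this valuation, which is property (2).

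Next I would establish (3) and (4) together by the same torsion argument. Since $\kappa_\cD(\ell)$ lies in $\wh\Sel_{\Delta\ell}(K_\infty,T_{f,n+t})$, we have $\partial_q(\kappa_{\cD,\varphi}(\ell))=0$ for every $q\nmid p\Delta\ell$, and $\res_q(\kappa_{\cD,\varphi}(\ell))$ lies in $\wh H^1_\Ord\otimes_\varphi\cO_\varphi$ for every $q\mid p\Delta\ell$. Applying $\partial_q$ (resp.\ the projection to $\wh H^1/\wh H^1_\Ord$) to the identity $\varpi_\varphi^{e_\cD(\ell)}\widetilde{\kappa}_{\cD,\varphi}(\ell)=\kappa_{\cD,\varphi}(\ell)$ shows that the image of $\widetilde{\kappa}_{\cD,\varphi}(\ell)$ under the appropriate quotient map is annihilated by $\varpi_\varphi^{e_\cD(\ell)}$ in the corresponding local quotient tensored with $\cO_\varphi$. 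By \lmref{L:2}, \lmref{L:rank-one} and \propref{P:annihilator_ord}, each of these local quotient modules is cyclic of $\varpi_\varphi$-exponent $(n+t)r$, where $r=\ord_{\varpi_\varphi}(\varphi(\varpi))$, so an element killed by $\varpi_\varphi^{e_\cD(\ell)}$ must lie in the submodule $\varpi_\varphi^{(n+t)r-e_\cD(\ell)}\cdot(\text{local quotient})$. The decisive inequality $(n+t)r-e_\cD(\ell)\ge nr$, equivalent to $e_\cD(\ell)\le tr$, follows at once from $e_\cD(\ell)\le t\le tr$; hence the image of $\widetilde{\kappa}_{\cD,\varphi}(\ell)$ in every such local quotient vanishes after reduction modulo $\varphi(\varpi^n)$, which proves (3) and (4).

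The main technical point — though not a serious obstacle given the ambient machinery — is to ensure that the local quotients $\wh H^1(K_{\infty,q},T_{f,n+t})/\wh H^1_\Ord(K_{\infty,q},T_{f,n+t})$ for $q\mid p\Delta$ have the expected cyclic structure of exponent $(n+t)$ needed for the torsion step. For $n$-admissible $q$ this is the content of \lmref{L:rank-one}; for $q\mid p\Delta$ it is a consequence of local Tate duality and \propref{P:annihilator_ord}, which identifies this quotient with the Pontryagin dual of $H^1_\Ord(L_q,A_{f,n+t})$, and the latter has the required structure by the explicit description of $F_q^+A_f$. Once this structural statement is in hand, the descent argument above is purely formal.
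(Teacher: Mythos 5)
Your general strategy for (2)--(4) is in the same spirit as the paper's (which, for (3) and (4), cites membership of $\kappa_\cD(\ell)$ in $\wh\Sel_{\Delta\ell}$ together with the freeness statement of \lmref{L:rank-one}), but there is a genuine gap in the uniform ``torsion'' step.

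You assert that every local module $\wh H^1_{\sing}(K_{\infty,q},T_{f,n+t})\ot_\varphi\cO_\varphi$, respectively the ordinary quotient $\wh H^1/\wh H^1_{\Ord}$, is cyclic of $\varpi_\varphi$-exponent $(n+t)r$. The paper establishes this cyclicity, via \lmref{L:rank-one}, only for $(n+t)$-admissible primes $q$ --- in particular for $q=\ell$ and $q\in S$. For $q\mid p\Delta$ the ordinary quotient is nowhere shown to be cyclic, and in fact at $q=p$ the local Euler characteristic makes the Pontryagin dual $H^1_{\Ord}(L_p,A_{f,n+t})$ typically of length strictly larger than its exponent, hence not cyclic; similarly $H^1_{\sing}(K_q,T_{f,n+t})$ for a general non-split $q\nmid p\Delta S$ need not be cyclic. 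So the step ``an element killed by $\varpi_\varphi^{e_\cD(\ell)}$ must lie in $\varpi_\varphi^{(n+t)r-e_\cD(\ell)}\cdot(\text{local quotient})$'' is not justified at those primes.

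Fortunately your argument is doing far more work than is needed, and the gap closes once one uses that $\widetilde{\kappa}_{\cD,\varphi}(\ell)$ is \emph{by construction} an element of $\cM_{n+t}=\wh\Sel_\Delta^S(K_\infty,T_{f,n+t})\ot_\varphi\cO_\varphi$. By the definition of the Selmer group $\wh\Sel_\Delta^S$, this element already satisfies $\partial_q(\widetilde{\kappa}_{\cD,\varphi}(\ell))=0$ for all $q\nmid p\Delta S$ and $\res_q(\widetilde{\kappa}_{\cD,\varphi}(\ell))\in\wh H^1_{\Ord}\ot_\varphi\cO_\varphi$ for all $q\mid p\Delta$, and these conditions pass to $\kappa'_{\cD,\varphi}(\ell)$ with no torsion argument at all. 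The torsion argument, combined with $e_\cD(\ell)\le t\le n$ and \lmref{L:rank-one}, is only needed at the primes $q\in S$ (including $q=\ell$), exactly where admissibility provides the cyclic local modules on which your computation is valid. That is the force of the paper's citation of both $\kappa_\cD(\ell)\in\wh\Sel_{\Delta\ell}(K_\infty,T_{f,n+t})\ot_\varphi\cO_\varphi$ and of \lmref{L:rank-one}.

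A smaller point on (1): surjectivity of the reduction map does not by itself preserve valuation $0$; what is needed is that the induced map $\cM_{n+t}/\frakm_{\cO_\varphi}\cM_{n+t}\to\cM_n/\frakm_{\cO_\varphi}\cM_n$ is injective (in fact an isomorphism), and this is precisely what \propref{P:maximal} supplies. Your conclusion is right but the stated reason does not reach it.
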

\begin{proof}
 By \propref{P:maximal}, the map $\cM_{n+t}/\uf_\varphi\cM_{n+t}\to\cM_n/\uf_\varphi\cM_n$ is an isomorphism, so we have $\Ord_{\uf_\varphi}(\kappa'_{\cD, \varphi}(\ell))=\Ord_{\uf_\varphi}(\widetilde{\kappa}_{\cD, \varphi}(\ell))=0$. Part (1) follows immediately. Part (2) is a direct consequence of \thmref{T:first}.
Part (3) and (4) follow from the fact that $\kappa_{\cD} (\ell)$ belongs to
$\wh\Sel_{\Delta\ell}(K_\infty,T_{n+t})\ot_\varphi\cO_\varphi$ and the freeness result of the ordinary cohomology group at $\ell$ in \lmref{L:rank-one}.
\end{proof}
\begin{lm}\label{L:eta}
Let $\eta_\ell$ be the natural homomorphism
$$
\eta_\ell : \widehat{H}^1_{\sing}(K_{\infty,\ell}, \Tfn)\otimes_\varphi \cO_\varphi
\to \Sel_\Delta(K_\infty, \Afn)^\vee\otimes_\varphi \cO_\varphi
$$
sending $\kappa$ to $\eta_\ell(\kappa):s\mapsto\pair{\kappa}{v_\ell(s)}_\ell$. Then we have $\eta_\ell(\partial_\ell (\kappa'_{\cD,\varphi}(\ell)))=0$.
\end{lm}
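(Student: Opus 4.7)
The plan is a direct application of the global reciprocity law \eqref{E:globalRec} combined with the local behaviour of $\kappa'_{\cD,\varphi}(\ell)$ recorded in \lmref{L:properties}. Let me explain the shape of the argument.

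First, I take an arbitrary $s\in\Sel_\Delta(K_\infty,\Afn)$ and unwind the definition of $\eta_\ell$: by construction, $\eta_\ell(\partial_\ell(\kappa'_{\cD,\varphi}(\ell)))(s)=\pair{\partial_\ell(\kappa'_{\cD,\varphi}(\ell))}{v_\ell(s)}_\ell$, so it suffices to prove that this local pairing vanishes for every such $s$, and then deduce the statement tensored with $\cO_\varphi$ using the $\Lambda$-equivariance of the Tate pairings.

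Next, since $\kappa'_{\cD,\varphi}(\ell)$ lies in $\wh\Sel_\Delta^S(K_\infty,\Tfn)\otimes_\varphi\cO_\varphi$ and $s$ lies in $\Sel_\Delta(K_\infty,\Afn)$, the global reciprocity formula \eqref{E:globalRec} reduces the sum over all places to a sum over $q\in S$:
\[\sum_{q\in S}\pair{\partial_q(\kappa'_{\cD,\varphi}(\ell))}{v_q(s)}_q=0.\]
(The terms at primes not in $S$ vanish because the local conditions defining $\Sel_\Delta$ and $\Sel_\Delta^S$ are mutually orthogonal complements there, by \propref{P:annihilator} for $q\nmid p\Delta S$ and \propref{P:annihilator_ord} for $q\mid p\Delta$; at primes $q\in S$ both $\partial_q(s)$ and $v_q(s)$ make sense because $s$ has no local condition imposed there and $q\nmid\Delta$.)

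Finally, the main point is that all the terms in this sum except the one at $q=\ell$ are zero: since $(S,\Delta)=1$ and our $S$ is chosen to contain $\ell$, any other $q\in S\setminus\{\ell\}$ satisfies $q\nmid\Delta\ell$, and \lmref{L:properties}(3) yields $\partial_q(\kappa'_{\cD,\varphi}(\ell))=0$. Hence the reciprocity sum collapses to
\[\pair{\partial_\ell(\kappa'_{\cD,\varphi}(\ell))}{v_\ell(s)}_\ell=0\]
for every $s\in\Sel_\Delta(K_\infty,\Afn)$, which is precisely the statement $\eta_\ell(\partial_\ell(\kappa'_{\cD,\varphi}(\ell)))=0$. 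There is no real obstacle here beyond bookkeeping the local conditions; the content is entirely in \eqref{E:globalRec} and in property (3) of \lmref{L:properties}, which itself descends from the fact that $\kappa_\cD(\ell)$ was constructed in $\wh\Sel_{\Delta\ell}(K_\infty,T_{f,n+t})$.
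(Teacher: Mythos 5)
Your proof is correct and takes essentially the same route as the paper: apply the global reciprocity law, note that the contributions at $q \in S \setminus \{\ell\}$ vanish by \lmref{L:properties}(3), and conclude that the remaining term at $\ell$ is zero. The paper's own proof is nearly identical, citing \lmref{L:properties}(3) and (4) plus \eqref{E:globalRec}; you only invoke (3), which is fine because the orthogonality at primes dividing $p\Delta$ (where (4) would matter) is already subsumed in the statement of \eqref{E:globalRec}. One small point where the paper is slightly more careful: since $\kappa'_{\cD,\varphi}(\ell)$ lives in the $\otimes_\varphi\cO_\varphi$-module, the pairing with an arbitrary $s \in \Sel_\Delta(K_\infty,\Afn)$ is not quite well-defined — it depends on the choice of lift to $\wh\Sel^S_\Delta(K_\infty,\Tfn)$. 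The clean fix, as the paper does, is to restrict at the outset to $s \in \Sel_\Delta(K_\infty,\Afn)[\ker\varphi]$, which is exactly the space you need to pair against to test vanishing of an element of $\Sel_\Delta(K_\infty,\Afn)^\vee \otimes_\varphi \cO_\varphi$; you acknowledge this via the appeal to "$\Lambda$-equivariance of the Tate pairings," but making the restriction explicit would tighten the argument.
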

\begin{proof}
Let $s\in\Sel_\Delta(K_\infty, \Afn)[\ker\varphi]$. By \lmref{L:properties} (3) (4), we see that $\langle \partial_q (\kappa'_{\cD,\varphi}(\ell)), v_q(s) \rangle_q=0$ for all $q\not =\ell$. The lemma thus follows from the global reciprocity law \eqref{E:globalRec}.
\end{proof}
\begin{lm}\label{L:trivial}
If $t=0$, then $s_{\cD_0}=0$, \ie $\Sel_\Delta(K_{\infty},\Afn)^\vee\otimes_{\varphi}\cO_\varphi$ is trivial.
\end{lm}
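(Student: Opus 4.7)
The plan combines \lmref{L:eta} with the Poitou--Tate surjection from \lmref{L:exactness}.

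First, the hypothesis $t_{\cD_0}=0$ says $\varphi(\theta_\infty(\cD_0))$ is a unit in $\cO_\varphi/\uf^n\cO_\varphi$. Via \thmref{T:first} and the identification $\wh H^1_{\sing}(K_{\infty,\ell},\Tfn)\iso\Lam/\uf^n\Lam$ of \lmref{L:rank-one}(2), the class $\partial_\ell(\kappa_{\cD_0,\varphi}(\ell))$ is a generator of the cyclic $\cO_\varphi/\uf^n\cO_\varphi$-module $\wh H^1_{\sing}(K_{\infty,\ell},\Tfn)\otimes_\varphi\cO_\varphi$ for every $n$-admissible prime $\ell\nmid\Delta$. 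In particular $e_{\cD_0}(\ell)=0$, so $\partial_\ell(\kappa'_{\cD_0,\varphi}(\ell))=\partial_\ell(\kappa_{\cD_0,\varphi}(\ell))$ is also a generator.

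Applying \lmref{L:eta} to this generator gives $\eta_\ell(\partial_\ell(\kappa'_{\cD_0,\varphi}(\ell)))=0$; by cyclicity of the source, $\eta_\ell\equiv 0$ for every such $\ell$.

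Now choose any $n$-admissible set $S$ coprime to $\Delta$ (possible by \propref{P:enlarge}). Applying \lmref{L:exactness} at each $K_m$ and passing to the projective limit over $m$ (where the corestriction maps on $\Sel_\Delta^S(K_m,\Tfn)$ are surjective by \corref{C:free}, ensuring exactness is preserved), the last term of the resulting sequence is the surjection
\[
\bigoplus_{\ell\in S}\wh H^1_{\sing}(K_{\infty,\ell},\Tfn)\surjto\Sel_\Delta(K_\infty,\Afn)^\vee
\]
sending $(\alpha_\ell)_{\ell\in S}$ to the functional $s\mapsto\sum_{\ell\in S}\pair{\alpha_\ell}{v_\ell(s)}_\ell$. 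Tensoring with $\cO_\varphi$ via $\varphi$ preserves the surjection, and the induced map is identified with $\bigoplus_{\ell\in S}\eta_\ell$, which vanishes by the previous paragraph. Hence $\Sel_\Delta(K_\infty,\Afn)^\vee\otimes_\varphi\cO_\varphi=0$, equivalently $s_{\cD_0}=0$.

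The main technical step is verifying that the passage to the projective limit preserves the Poitou--Tate surjection of \lmref{L:exactness} and that after tensoring this matches $\bigoplus_\ell\eta_\ell$; once this is in place, the conclusion follows immediately from the fact that under $t=0$ the generator of the source of $\eta_\ell$ lies in its kernel.
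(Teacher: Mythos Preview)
Your proof is correct but takes a different route from the paper's. Both arguments begin identically: when $t=0$, the element $\partial_\ell(\kappa_{\cD,\varphi}(\ell))$ generates $\wh H^1_{\sing}(K_{\infty,\ell},\Tfn)\otimes_\varphi\cO_\varphi$ by \thmref{T:first}, and \lmref{L:eta} then forces $\eta_\ell\equiv 0$ for every $n$-admissible prime $\ell\nmid\Delta$.

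From this point the paper argues by contradiction: assuming the Selmer group is nontrivial, Nakayama's lemma together with the control theorem (\propref{P:control1}) produces a nonzero class $s\in\Sel_\Delta(K,A_{f,1})$, and \thmref{T:Theorem3.2} (a Chebotarev argument) supplies a specific admissible $\ell$ with $v_\ell(s)\neq 0$; the nondegeneracy of the local Tate pairing then contradicts $\eta_\ell=0$. You instead pass the Poitou--Tate exact sequence of \lmref{L:exactness} to the inverse limit over $K_m$, tensor with $\cO_\varphi$, and observe that the resulting surjection onto $\Sel_\Delta(K_\infty,\Afn)^\vee\otimes_\varphi\cO_\varphi$ is exactly $\bigoplus_{\ell\in S}\eta_\ell=0$, so the target vanishes. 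Your approach is more structural: once one knows the $\eta_\ell$ for $\ell\in S$ collectively surject, no individual ``bad'' prime need be located via Chebotarev. The paper's route avoids the limit argument here but re-invokes \thmref{T:Theorem3.2}, which is central machinery in any case.

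One minor correction: your stated reason for preservation of exactness in the limit (``corestriction maps on $\Sel_\Delta^S(K_m,\Tfn)$ are surjective by \corref{C:free}'') is not the relevant point. What makes the inverse limit of the sequences in \lmref{L:exactness} exact is simply that every term is a finite abelian group, so the Mittag--Leffler condition holds automatically and $\varprojlim^1$ vanishes. The surjectivity of corestriction on $\Sel_\Delta^S$ plays no role at this step.
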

\begin{proof}
If $t=0$, then $\varphi (\theta_\infty (\cD_0))$ is a unit in $\cO_\varphi/(\uf_\varphi^n)$.
By \th \ref{T:first}, this implies that $\partial_\ell (\kappa_{\cD ,\varphi}(\ell))$ generates
$\widehat{H}^1_{\sing}(K_{\infty,\ell},T_n)\otimes_{\varphi}\cO_\varphi$ for any admissible prime $\ell$.
Therefore the map $\eta_\ell$ in the \lmref{L:eta} is trivial for all admissible primes.
Assume that $\Sel_\Delta(K_{\infty},\Afn)^\vee\otimes_{\varphi}\cO_\varphi$ is non-trivial.
By Nakayama's lemma,
$$
(\Sel_\Delta(K_{\infty},\Afn)^\vee
\otimes_{\varphi}\cO_\varphi) \slash \mathfrak{m}_{\cO_\varphi}
=(\Sel_\Delta(K_{\infty},\Afn)[ \mathfrak{m}_\Lambda])^\vee
\otimes_{\varphi}\cO_\varphi \slash \mathfrak{m}_{\cO_\varphi}$$
is non-zero.
Let $s$ be a non-trivial element in
$\Sel_\Delta(K_{\infty},\Afn)[ \mathfrak{m}_\Lambda]$.
By \propref{P:control1}, $\Sel_\Delta(K_{\infty},\Afn)[ \mathfrak{m}_\Lambda]\subset\Sel_\Delta(K,A_{f,1})$, so $s$ can be viewed as an element in
$H^1(K,A_{f,1})$. By \thmref{T:Theorem3.2}, we can choose an $n$-admissible prime $\ell\ndivides \Delta$
such that $\partial_\ell(s)=0$ and $v_{\ell}(s)\neq 0$. Since the local Tate pairing $\pairing_\ell$ is perfect, $\eta_{\ell}$ is non-zero.
This is a contradiction.
\end{proof}
In view of \lmref{L:trivial}, we may assume that $t>0$. Let $\Pi$ be the set of rational primes $\ell$ satisfying the following conditions:
\begin{mylist}
\item $\ell$ is $(n+t^*)$-admissible and $\ell\ndivides \Delta$,
\item The integer $e_\cD(\ell)=\Ord_{\uf_\varphi}(\kappa_{\cD,\varphi}(\ell))$ is minimal among $\ell$ satisfying the condition (1).\end{mylist}
Then $\Pi \neq \emptyset$ by \thmref{T:Theorem3.2}. Let $e=e_\cD(\ell)$ for any $\ell\in\Pi$.
\begin{lm}\label{L:3}
We have $e<t$.
\end{lm}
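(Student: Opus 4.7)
The plan is to argue by contradiction, adapting the argument used to prove \lmref{L:trivial} (which handled the case $t = 0$). Suppose for contradiction that $e = t$. Since $e$ is the minimum of $e_\cD(\ell)$ over all $(n+t^\ast)$-admissible primes $\ell \nmid \Delta$, while on the other hand $e_\cD(\ell) \leq t$ for every such $\ell$ by the bound derived from \thmref{T:first} above, we must in fact have $e_\cD(\ell) = t$ for every $(n+t^\ast)$-admissible prime $\ell \nmid \Delta$. Combined with \lmref{L:properties}(2), this forces $\Ord_{\uf_\varphi}(\partial_\ell(\kappa'_{\cD,\varphi}(\ell))) = 0$ for every such $\ell$, i.e.\ $\partial_\ell(\kappa'_{\cD,\varphi}(\ell))$ is an $\cO_\varphi$-generator of the free rank-one $\cO_\varphi/\varphi(\uf)^n\cO_\varphi$-module $\wh H^1_{\sing}(K_{\infty,\ell},\Tfn)\otimes_\varphi \cO_\varphi$ (using \lmref{L:rank-one}(2)).

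Next, \lmref{L:eta} tells us that $\eta_\ell(\partial_\ell(\kappa'_{\cD,\varphi}(\ell))) = 0$ for every admissible $\ell$. Since the local Tate pairing between $\wh H^1_{\sing}(K_{\infty,\ell},\Tfn)$ and $H^1_{\fin}(K_{\infty,\ell},\Afn)$ is perfect (\propref{P:annihilator}) and the former is generated by $\partial_\ell(\kappa'_{\cD,\varphi}(\ell))$ over $\cO_\varphi$, the vanishing of $\eta_\ell$ on this generator implies that the restriction map
\[
v_\ell \colon \Sel_\Delta(K_\infty,\Afn)[\ker\varphi] \longto H^1_{\fin}(K_{\infty,\ell},\Afn)\otimes_\varphi \cO_\varphi
\]
is identically zero for every $(n+t^\ast)$-admissible prime $\ell \nmid \Delta$.

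Now the standing assumption $s_{\cD_0} > 0$ means $\Sel_\Delta(K_\infty,\Afn)^\vee \otimes_\varphi \cO_\varphi \neq 0$. By Nakayama's lemma, its quotient by $\frakm_{\cO_\varphi}$ is non-zero; dualizing, $\Sel_\Delta(K_\infty,\Afn)[\frakm_\Lam]$ is non-zero. By \propref{P:control1}(1) (applied successively up the tower $K_\infty/K$), this group is naturally identified with $\Sel_\Delta(K,A_{f,1})$, so we can pick a non-zero element $s \in \Sel_\Delta(K,A_{f,1}) \subset H^1(K,A_{f,1})$. Applying \thmref{T:Theorem3.2} to $s$ produces an $(n+t^\ast)$-admissible prime $\ell \nmid \Delta$ such that $\partial_\ell(s) = 0$ and $v_\ell(s) \neq 0$ in $H^1_{\fin}(K_\ell,A_{f,1}) \hookto H^1_{\fin}(K_{\infty,\ell},\Afn)\otimes_\varphi \cO_\varphi$, contradicting the vanishing of $v_\ell$ established above.

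The main subtle point — the only place genuine care is required — is the dualization step: one must check that the local Tate duality pairing, the identification $\Sel_\Delta(K_\infty,\Afn)[\frakm_\Lam] = \Sel_\Delta(K,A_{f,1})$, and the compatibility of the restriction maps at $\ell$ on the various levels ($K$, $K_\infty$, reduction mod $\uf$) all line up so that the element $s$ produced by \thmref{T:Theorem3.2} really gives a non-zero pairing against the generator $\partial_\ell(\kappa'_{\cD,\varphi}(\ell))$. This is essentially the mechanism already exploited in the proof of \lmref{L:trivial}, and the present lemma is a direct generalization of that argument, the role of the statement \emph{``$\varphi(\theta_\infty(\cD_0))$ is a unit''} being replaced by the minimality condition defining $\Pi$.
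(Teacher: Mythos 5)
Your proof is correct and takes essentially the same approach as the paper. The only cosmetic difference is organizational: you first extract the intermediate conclusion that $v_\ell$ vanishes identically on $\Sel_\Delta(K_\infty,\Afn)[\ker\varphi]$ for every $(n+t^*)$-admissible $\ell$ (mirroring the structure of \lmref{L:trivial}), and then invoke \thmref{T:Theorem3.2}, whereas the paper produces $s$ and $\ell$ first and then reads off the contradiction directly from the orthogonality of $v_\ell(s)\neq 0$ and the non-zero class $\partial_\ell(\kappa'_{\cD,\varphi}(\ell))$ inside the one-dimensional $\cO_\varphi/\uf_\varphi$-vector spaces $H^1_{\fin}(K_\ell,A_{f,1})\otimes_\varphi\cO_\varphi$ and $H^1_{\sing}(K_\ell,T_{f,1})\otimes_\varphi\cO_\varphi$; the two phrasings are equivalent.
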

\begin{proof}
Suppose that $e=t$. Then $e_\cD(\ell)=t$ for all $(n+t^*)$-admissible
primes $\ell$ since $e_\cD(\ell)\leq t$.
By \propref{P:control1}, $H^1(K,A_1)\to H^1(K_\infty,A_n)[\mathfrak{m}_\Lambda]$ is an isomorphism.
Hence there exists a non-zero element \[s\not =0\in \Sel_\Delta(K_{\infty},\Afn)[\frakm_{\Lam}]\subset H^1(K,A_1)
\otimes_{\varphi}\cO_\varphi.\]
By \thmref{T:Theorem3.2} (1), there exists an $(n+t^*)$-admissible prime $\ell$ such that $v_\ell(s)$
is non-zero in $H_\fin^1(K,A_1)\otimes_\varphi \cO_\varphi$.
On the other hand, by \lmref{L:properties} (4), the image of $\partial_\ell (\kappa'_{\cD,\varphi} (\ell))$
in $H^1(K_\ell,T_{f,1})\otimes_\varphi \cO_\varphi$ is non-zero. Moreover, by \lmref{L:eta}, the image of $\partial_\ell (\kappa'_{\cD,\varphi} (\ell))$
in $H^1(K_\ell,T_{f,1})\otimes_\varphi \cO_\varphi$ is orthogonal
to $v_{\ell}(s)$ with respect to the local Tate pairing. Since the local Tate pairing
$$
H_\fin^1(K,A_{f,1})\otimes_\varphi \cO_\varphi \times H_{\sing}^1(K,T_{f,1})\otimes_\varphi \cO_\varphi \to
\cO_{\varphi} \slash \varpi_\varphi \cO_\varphi
$$
is perfect and $H_\fin^1(K,A_{f,1})\otimes_\varphi \cO_\varphi$ and $H_{\sing}^1(K,T_{f,1})\otimes_\varphi \cO_\varphi$
are one dimensional vector space over $\cO_{\varphi} \slash \varpi_\varphi \cO_\varphi$,
this is a contradiction.
\end{proof}

Let $\ell_1 \in \Pi$ and $S$ be an $(n+t^*)$-admissible set containing $\ell_1$.
Let $\kappa_1$ be the image of $\kappa_{\cD,\varphi}'(\ell_1)$ in
\[\wh\Sel_\Delta^S(K_\infty, \Tfn)\otimes_{\varphi} \cO_{\varphi} \slash \varpi_\varphi \cO_\varphi
= \wh\Sel_\Delta^S(K_\infty, \Tfn)\slash \mathfrak{m}_{\Lambda}
\otimes_{\varphi} \cO_{\varphi}\hookto H^1(K,T_{f,1})\otimes_{\varphi}\cO_{\varphi},\]
where the last map induced by the corestriction is injective by \propref{P:maximal}. Hence, $\kappa_1$ is non-zero element in $H^1(K,T_{f,1})\otimes_{\varphi}\cO_{\varphi}$.
By the first part of \thmref{T:Theorem3.2}, there exists an $(n+t^*)$-admissible prime $\ell_2\ndivides\Delta$
such that $\partial_{\ell_2}(\kappa_1)=0$ and \[v_{\ell_2}(\kappa_1)\neq 0 \in H^1_\fin(K,T_{f,1})\otimes_{\varphi} \cO_{\varphi}.\]
It follows from the fact $v_{\ell_2}(\kappa_1)\not =0$ and the minimality of $e=e_\cD(\ell_1)=\ord_{\varpi_\varphi}(\kappa_{\cD ,\varphi}(\ell_1))$ that
$$
\ord_{\varpi_\varphi}(v_{\ell_2}(\kappa_{\cD ,\varphi}(\ell_1)))=\ord_{\varpi_\varphi}(\kappa_{\cD ,\varphi}(\ell_1))\leq \ord_{\varpi_\varphi}(\kappa_{\cD ,\varphi}(\ell_2))
\leq \ord_{\varpi_\varphi}(v_{\ell_1}(\kappa_{\cD ,\varphi}(\ell_2))).
$$
(the last inequality is due to the fact that $v_{\ell_1}$ is a homomorphism). By the second explicit reciprocity law (\thmref{T:second}), there exists an $(n+t^*)$-admissible form $\cD''_{t^*}=(\Delta\ell_1\ell_2,g_{n+t^*})$ such that \[v_{\ell_2}(\kappa_\cD(\ell_1))=v_{\ell_1}(\kappa_{\cD ,\varphi}(\ell_2))=\theta_{\infty}(\cD''_t)\quad(\cD_t''=(\Delta\ell_1\ell_2,g_{n+t^*}\pmod{\uf^{n+t}})).\]
In particular, $\ord_{\varpi_\varphi}(v_{\ell_1}(\kappa_{\cD ,\varphi}(\ell_2)))=\ord_{\varpi_\varphi}(v_{\ell_1}(\kappa_{\cD ,\varphi}(\ell_1))).$
We thus conclude that \[\ord_{\varpi_\varphi}(v_{\ell_2}(\kappa_{\cD ,\varphi}(\ell_1)))=e_\cD(\ell_1)=e_\cD(\ell_2)=e\text{ and }\ell_2 \in \Pi.\]   Let $\cD_0'':=(\Delta\ell_1\ell_2,g_{n+t^*} \pmod{\varpi^n})$.
Then we have \[t_{\cD_0''}=\Ord_{\uf_\varphi}(\varphi(\theta_\infty(\cD_0'')))=e<t=t_{\cD_0}\leq t^*.\]
Therefore, we can apply the induction hypothesis to $\cD''_{t^*}$ and conclude that $s_{\cD_0''} \leq 2t_{\cD_0''}$. To finish the proof, it suffices to show
\beq\label{E:induction}s_{\cD_0}\leq s_{\cD_0''}+2(t-t_{\cD_0''}).\eeq

Let $S_{[\ell_1 \ell_2]}$ denote the subgroup of $\Sel_\Delta(K_\infty,\Tfn)$ consisting of classes which are locally trivial at the primes dividing $\ell_1$ and $\ell_2$. By definition, there are two exact sequences of $\Lam$-modules:
%\begin{align*}0\to &S_{[\ell_1\ell_2]}\to \Sel_\Delta(K_\infty,\Afn)\to H^1_\fin(K_{\infty,\ell_1},\Afn)\oplus H^1_\fin(K_{\infty,\ell_2},\Afn);\\
%0\to& S_{[\ell_1\ell_2]}\to \Sel_{\Delta_{B''}}(K_\infty,\Afn)\to H^1_\Ord(K_{\infty,\ell_1},\Afn)\oplus H^1_\Ord(K_{\infty,\ell_2},\Afn).\end{align*}
%Taking the duality, we obtain
\begin{align}\label{E:exact1}\wh H^1_\sing(K_{\infty,\ell_1},\Tfn)\oplus &\wh H^1_\sing(K_{\infty,\ell_2},\Tfn)\stackrel{\eta_s}\longto \Sel_\Delta(K_\infty,\Afn)^\vee\to S_{[\ell_1\ell_2]}^\vee\to 0\\
\intertext{ and }
\label{E:exact2}\wh H^1_\fin(K_{\infty,\ell_1},\Tfn)\oplus &\wh H^1_\fin(K_{\infty,\ell_2},\Tfn)\stackrel{\eta_f}\longto \Sel_{\Delta\ell_1\ell_2}(K_\infty,\Afn)^\vee\to S_{[\ell_1\ell_2]}^\vee\to 0,\end{align}
where $\eta_s$ and $\eta_f$ are induced by the local pairing $\pairing_{\ell_1}\oplus\pairing_{\ell_2}$. Let $\eta_s^\varphi$ (resp. $\eta_f^\varphi$) denote the map induced from $\eta_s$ (resp. $\eta_f^\varphi$) after tensoring with $\cO_\varphi$ via $\varphi$.
Fixing an isomorphism $\oplus_{i=1}^2\wh H^1_\sing(K_{\infty,\ell_i}\Tfn)\ot_\varphi\cO_\varphi\iso \cO_\varphi^{\oplus 2}$, from \lmref{L:eta} we deduce that $\eta_s^\varphi$ factors through the quotient
\[\cO_\varphi/(\partial_{\ell_1} (\kappa'_{\cD, \varphi}(\ell_1)))\oplus \cO_\varphi/(\partial_{\ell_2} (\kappa'_{\cD, \varphi}(\ell_2))).\]
Moreover, by \lmref{L:properties} (4), we have
$$
t-t_{\cD_0''}=\mathrm{ord}_{\varpi_\varphi}(\partial_{\ell_1} \kappa'_{\cD, \varphi}(\ell_1))
=\mathrm{ord}_{\varpi_\varphi}(\partial_{\ell_2} \kappa'_{\cD, \varphi}(\ell_2)).
$$
Hence, from \eqref{E:exact1} we obtain the exact sequence
\beq\label{E:exact3}\left(\cO_\varphi/(\uf_\varphi^{t-t_{\cD_0''}})\right)^{\oplus 2}\stackrel{\eta_s^\varphi}\longto \Sel_\Delta(K_\infty,\Afn)^\vee\ot_\varphi\cO_\varphi\to S_{[\ell_1\ell_2]}^\vee\ot_\varphi\cO_\varphi\to 0.\eeq
\begin{lm}\label{L:4}
The kernel of $\eta_f^\varphi$ contains the elements
$(0,v_{\ell_2}(\kappa'_{\cD ,\varphi}(\ell_1)))$ and $(v_{\ell_1}(\kappa'_{\cD ,\varphi}(\ell_2)),0)$.
\end{lm}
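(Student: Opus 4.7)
The plan is to derive both containments from the global reciprocity law. Set $\kappa:=\kappa'_{\cD,\varphi}(\ell_1)$ and fix any $s\in\Sel_{\Delta\ell_1\ell_2}(K_\infty,\Afn)\ot_\varphi\cO_\varphi$. The global reciprocity law of class field theory yields
$$
\sum_{q}\pair{\res_q(\kappa)}{\res_q(s)}_q=0,
$$
and I intend to show that every term on the left vanishes except the one at $q=\ell_2$. The surviving identity then reads $\pair{v_{\ell_2}(\kappa)}{\res_{\ell_2}(s)}_{\ell_2}=0$, which is precisely the statement $\eta_f^\varphi\bigl(0,v_{\ell_2}(\kappa'_{\cD,\varphi}(\ell_1))\bigr)(s)=0$. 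Since this holds for every $s$, the first inclusion follows.

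The main step is a term-by-term analysis, combining the Selmer conditions on $\kappa$ supplied by \lmref{L:properties} with the definition of $\Sel_{\Delta\ell_1\ell_2}$. At primes $q\nmid p\Delta\ell_1\ell_2$ both $\res_q(\kappa)$ and $\res_q(s)$ lie in the finite subgroup, and the pairing vanishes by \propref{P:annihilator}. At primes $q\mid p\Delta$, and also at $q=\ell_1$, both classes satisfy the ordinary local condition, and the pairing vanishes by \propref{P:annihilator_ord}(1). Finally, at $q=\ell_2$ one has $\partial_{\ell_2}(\kappa)=0$ because $\ell_2\nmid\Delta\ell_1$, so $\res_{\ell_2}(\kappa)=v_{\ell_2}(\kappa)\in\wh H^1_\fin(K_{\infty,\ell_2},\Tfn)\ot_\varphi\cO_\varphi$, while $\res_{\ell_2}(s)\in H^1_\Ord(K_{\infty,\ell_2},\Afn)\ot_\varphi\cO_\varphi$ by the defining Selmer condition; this is the sole surviving contribution.

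The second identity $\eta_f^\varphi\bigl(v_{\ell_1}(\kappa'_{\cD,\varphi}(\ell_2)),0\bigr)=0$ follows by repeating the same argument with the roles of $\ell_1$ and $\ell_2$ interchanged, applied now to $\kappa'_{\cD,\varphi}(\ell_2)\in\wh\Sel_{\Delta\ell_2}^S(K_\infty,\Tfn)\ot_\varphi\cO_\varphi$. I expect no serious obstacle here: once \lmref{L:properties} pins down the local behavior of the Euler system classes, the lemma reduces to bookkeeping against the local orthogonality relations recorded in \propref{P:annihilator} and \propref{P:annihilator_ord}. The only minor subtlety is verifying that base change by $\varphi:\Lam\to\cO_\varphi$ preserves both the global reciprocity identity and the local orthogonalities, but this is automatic from the $\Lam$-linearity of the local Tate pairings.
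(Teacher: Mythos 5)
Your proof is correct and follows essentially the same route as the paper: both verify term-by-term, using the local conditions from \lmref{L:properties} together with the orthogonality results in \propref{P:annihilator} and \propref{P:annihilator_ord}, that all terms in the global reciprocity sum vanish except the one at $\ell_2$ (resp. $\ell_1$), the paper simply presenting this more compactly by taking $s\in\Sel_{\Delta\ell_1\ell_2}(K_\infty,\Afn)[\ker\varphi]$ rather than tensoring by $\cO_\varphi$.
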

\begin{proof}Let $s\in\Sel_{\Delta\ell_1\ell_2}(K_\infty,\Afn)[\ker\varphi]$. By \lmref{P:annihilator_ord} and \lmref{L:properties}
 (3) (4), we have
 \[\pair{\partial_q(\kappa'_{\cD ,\varphi}(\ell_1))}{v_q(s)}_q=0\text{ for }q\ndivides\Delta\ell_1\text{ and }\pair{\res_{\ell_1}(\kappa'_{\cD ,\varphi}(\ell_1))}{\res_{\ell_1}(s)}_{\ell_1}=0.\]
 By the global reciprocity law, we find that $\pair{v_{\ell_2}(\kappa'_{\cD ,\varphi}(\ell_1))}{\res_{\ell_2}(s)}_{\ell_2}=0$. The same argument shows that $\pair{v_{\ell_1}(\kappa'_{\cD ,\varphi}(\ell_2))}{\res_{\ell_1}(s)}_{\ell_1}=0$. This completes the proof.
\end{proof}
Fixing an isomorphism $\oplus_{i=1}^2\wh H^1_\fin(K_{\infty,\ell_i}\Tfn)\ot_\varphi\cO_\varphi\iso \cO_\varphi^{\oplus 2}$, from \eqref{E:exact2} and \lmref{L:4} we deduce the exact sequence
\[\cO_\varphi/(v_{\ell_2}(\kappa'_{\cD ,\varphi}(\ell_1)))\oplus\cO_\varphi/(v_{\ell_1}(\kappa'_{\cD ,\varphi}(\ell_2))) \stackrel{\eta_f^\varphi}\longto \Sel_{\Delta\ell_1\ell_2}(K_\infty,\Afn)^\vee\ot_\varphi\cO_\varphi\to S_{[\ell_1\ell_2]}^\vee\ot_\varphi\cO_\varphi\to 0.\]
Note that
$$
\mathrm{ord}_{\varpi_\varphi}(v_{\ell_2}(\kappa'_{\cD,\varphi}(\ell_1)))
=\mathrm{ord}_{\varpi_\varphi}(v_{\ell_1}(\kappa'_{\cD,\varphi}(\ell_2)))=t_{\cD_0''}-e=0.
$$
We thus find that
\beq\label{E:exact4}\Sel_{\Delta\ell_1\ell_2}(K_\infty,\Afn)^\vee\ot_\varphi\cO_\varphi\isoto S_{[\ell_1\ell_2]}^\vee\ot_\varphi\cO_\varphi.\eeq
Now it is clear that \eqref{E:induction} follows from \eqref{E:exact3} and \eqref{E:exact4}.

\begin{thank}
This work began during the first authur's visit to Taida Institute of Mathematical Science.
He is grateful to their hospitality and financial support.
The authors thank the referee for helpful suggestions. 
\end{thank}

\bibliographystyle{amsalpha}
\bibliography{mybib}
\end{document}